\newcommand\blfootnote[1]{%
  \begingroup
  \renewcommand\thefootnote{}\footnote{#1}%
  \addtocounter{footnote}{-1}%
  \endgroup
}
\newcommand{\dd}{\mathrm{d}}
\newcommand{\id}{\mathrm{id}}
\DeclareMathOperator{\supp}{supp}
\DeclarePairedDelimiter{\abs}{\lvert}{\rvert}
\DeclarePairedDelimiter{\norm}{\lVert}{\rVert}
\newtheorem{definition}{Definition}
\newtheorem{example}{Example}
\newtheorem{theorem}{Theorem}
\newtheorem{lemma}{Lemma}
\newtheorem{proposition}{Proposition}
\newtheorem{remark}{Remark}
\newtheorem{corollary}{Corollary}
\begin{document}

\title{Multiple Non-radial Solutions for Coupled Schr\"{o}dinger Equations}
\author[1]{Xiaopeng Huang}
\author[2]{Haoyu Li}
\author[3*]{Zhi-Qiang Wang}
\affil[1]{College of Mathematics and Statistics,
	Fujian Normal University,
	Fuzhou, Fujian 350117, P. R. China
}
\affil[2]{Departamento de Matem\'atica,
	Universidade Federal de S\~{a}o Carlos,
	S\~{a}o Carlos-SP, 13565-905, Brazil
}
\affil[3]{Department of Mathematics and Statistics,
 	Utah State University,
 	Logan, Utah 84322, USA
}

%
%

\maketitle
\blfootnote{*Corresponding author. E-mail: zhi-qiang.wang@usu.edu
}
\begin{abstract}
	The paper deals with the existence of non-radial solutions for an $N$-coupled nonlinear elliptic system. In the repulsive regime with some structure conditions on the coupling and for each symmetric subspace of rotation symmetry, we prove the existence of an infinite sequence of non-radial positive solutions and an infinite sequence of non-radial nodal solutions.
\end{abstract}

{\bfseries Keywords:} coupled Schr\"{o}dinger equations; non-radial solutions; $\mathbb{Z}_p$ index theory.

{\bfseries 2010 Mathematics Subject Classification:} 35B05, 35B32, 35J50, 58C40.

\section{Introduction}
In this paper, we consider the following $N$-coupled system of nonlinear elliptic equations
\begin{equation}\label{eq:main}
	\begin{cases}
		-\Delta u_j + \lambda_j u_j = \mu_j u_j^3 +\sum_{k\neq j} \beta_{jk} u_ju_k^2, &\text{in $\Omega$,}\\
		u_j=0, &\text{on $\partial \Omega$ \qquad }
	\end{cases}
	\text{for each $j=1,\dots,N$.}
\end{equation}
This class of systems arises naturally when seeking standing wave solutions to the time-dependent Schr\"odinger system~\eqref{e:Main} which models many physical problems
\begin{equation}\label{e:Main}
    \begin{cases}
		\mathrm{i}\frac{\partial}{\partial t}\Phi_j+\Delta\Phi_j+\mu_j|\Phi_j|^2\Phi_j+\sum_{i\neq j}\beta_{ij}|\Phi_i|^2\Phi_j=0 &   \text{for $t>1$, $x\in\mathbb{R}^n$,}\\
     \Phi_j(x,0)=\Phi_{j0}(x), & j=1,\dots,N.
	\end{cases}
\end{equation}
Such equations are found in Kerr-like photorefractive media \cite{akhmedievPartiallyCoherentSolitons1999}, the Hartree-Fock theory for Bose-Einstein condensates \cite{esryHartreeFockTheoryDouble1997}, and other physical phenomena. Solutions to the system \eqref{eq:main} can also describe the steady states of the distribution of different species of some diffusion systems.
In physical terms, when the coupling constant $\beta_{ij}$ ($i\neq j$) is positive, it is referred as attractive, and when the coupling constant $\beta_{ij}$ is negative, it is said to be repulsive, leading to very different behaviors of solutions for different coupling regimes.

There have been extensive studies in the last twenty years or so on these systems and related systems to reveal a rich set of interesting and important phenomena. See the seminal paper of Lin and Wei \cite{linGroundStateCoupled2005}, the subsequent works \cites{ambrosettiStandingWavesCoupled2007,bartschNoteGroundStates2006,bartschLiouvilleTheoremApriori2010,dancerPrioriBoundsMultiple2010,liMultipleNodalSolutions2021,liuMultipleBoundStates2008,liuGroundStatesBound2010,liuVectorSolutionsPrescribed2019,norisUniformHolderBounds2010,sirakovLeastEnergySolitary2007,terraciniMultipulsePhasesKmixtures2009,weiNonradialSymmetricBound2007,weiRadialSolutionsPhase2008,liuMultipleMixedStates2015} and references therein.
In particular, for the repulsive cases, there have been works on the existence of multiplicity results of segregation-type solutions, symmetry breaking of solutions, and phase-separations among other things. One comment feature for this class of systems is the existence of many so-called semi-trivial solutions (solutions being non-zero as vectors but containing at least one zero component).
In terms of the classification of solutions, Liu and Wang \cites{liuMultipleBoundStates2008, liuGroundStatesBound2010} gave the existence of infinitely many non-trivial solutions (solutions with every component non-zero) regardless of the existence of how many semi-trivial solutions.
\cites{weiRadialSolutionsPhase2008,dancerPrioriBoundsMultiple2010,bartschLiouvilleTheoremApriori2010} give the existence of an infinite sequence of positive solutions, further showing disparity of qualitative properties solutions with the counterpart of the scale field equation $-\Delta w+w=w^3$ for which uniqueness of positive solutions is well known (\cites{gidasSymmetryRelatedProperties1979,kwongUniquenessPositiveSolutions1989}).
All these works in \cites{weiRadialSolutionsPhase2008,dancerPrioriBoundsMultiple2010,bartschLiouvilleTheoremApriori2010} explore some symmetry structure of the coupling matrix $\mathcal{B}= (\beta_{ij})$ when writing $\beta_{jj}=\mu_j$ and
the work of \cites{weiRadialSolutionsPhase2008,dancerPrioriBoundsMultiple2010} was done by using variational methods and that of \cite{bartschLiouvilleTheoremApriori2010} by a global bifurcation approach.
The work \cites{weiRadialSolutionsPhase2008,dancerPrioriBoundsMultiple2010} was further extended to the $N$-system for any $N\geq 2$ in \cite{tianMultipleSolitaryWave2011} (see also related work in \cite{terraciniMultipulsePhasesKmixtures2009}).
When the domain is radially symmetric (including the case of the entire space $\mathbb R^n$), the above results of \cites{weiRadialSolutionsPhase2008,bartschLiouvilleTheoremApriori2010,dancerPrioriBoundsMultiple2010}
also give the existence of an infinite sequence of radial positive solutions.
A natural question is to study non-radial solutions in the setting of radially symmetric domains for which little work has been done so far.
Some numeric work was done in \cite{changSegregatedNodalDomains2004} indicating the existence of a rich variety of different types of non-radial solutions.
In \cite{weiNonradialSymmetricBound2007}, Wei and Weth constructed non-radial ground state solutions in some symmetric subspaces and the bifurcation results in \cite{bartschLiouvilleTheoremApriori2010}
also pointed out various type of non-radial positive solutions.
Motivated by these works, one major goal of our current paper is to investigate the existence of multiple non-radial positive solutions. More precisely, for bounded radially symmetric domains, within each symmetric subspace we construct an infinite sequence of non-radial positive solutions.
Furthermore, our ideas can also be adapted partially to the study of non-radial nodal (sign-changing) solutions.
For nodal solutions, there have been quite some works in the literature such as \cite{satoMultipleExistenceSemipositive2013}, \cite{chenInfinitelyManySignchanging2016}, \cite{liuMultipleMixedStates2015}, \cite{tavaresSignchangingSolutionsCompetitiondiffusion2012}, \cite{liMultipleNodalSolutions2021}, etc.
In \cite{liuMultipleMixedStates2015}, the existence of an infinite sequence of nodal solutions was constructed by using minimax method in the presence of invariant sets of a negative pseudo gradient flow, which gives a sequence of radial nodal solutions when the domain is radially symmetric.
Further classification was done in \cite{liMultipleNodalSolutions2021} in which with prescribed component-wise nodal numbers an infinite sequence of radial nodal solutions was constructed with the whole sequence of solutions sharing the same nodal data.
Another goal of the current paper is to study multiple non-radial nodal solutions.

Now let us describe the main results of the paper for system \eqref{eq:main}.
Here, $\Omega$ is a radially symmetric domain in $\mathbb{R}^n$ with $n=2$ or $3$, i.e., $\Omega$ is either a ball or an annulus.

Let $p$ be a prime factor of $N$ and write $N = pB$. We assume that $\lambda_j$, $\mu_j$, $\beta_{ij}$ satisfy
\begin{enumerate}[(A)]
	\item\label{itm:as_a} $\lambda_{p b-p+1}=\lambda_{p b-p+2}=\cdots=\lambda_{p b}>0$ for $b=1, \ldots, B$.
	\item For $i, j=1, \ldots, N$ and $i \neq j, \beta_{i j}=\beta_{j i} \leq 0$ and $\mu_j>0$.
	\item For $b=1 \ldots, B, \mathcal{B}=\left(\beta_{i j}\right)_{N \times N}$ is invariant under the action of
		\[
			\prod_{i=1}^{p-1} C_{p b-p+i, p b-p+i+1} \circ R_{p b-p+i, p b-p+i+1},
		\]
	where $R_{ij}$ is the transformation of exchanging the $i$-th row and the $j$-th row of a matrix, and $C_{ij}$ is the counterpart for column exchanging.
	\item\label{itm:as_d} For $b=1, \ldots, B$ and $p b-p+1 \leq j \leq p b$, it holds
		\[
			\mu_j+\sum_{p b-p+1 \leq i \leq p b ; i \neq j} \beta_{i j} \leq 0 .
		\]
\end{enumerate}

We define
\begin{equation*}
	\mathscr{R}_\theta u(x) = u(R_{-\theta} x),
\end{equation*}
where
\begin{equation}\label{eq:R_theta}
	R_\theta = \begin{cases}
		\begin{pmatrix}
			\cos\theta & -\sin\theta\\
			\sin\theta & \cos\theta
		\end{pmatrix} & \text{if $n=2$,}\\
		\begin{pmatrix}
			\cos\theta & -\sin\theta & 0\\
			\sin\theta & \cos\theta & 0\\
			0 & 0 & 1
		\end{pmatrix} & \text{if $n=3$.}
	\end{cases}
\end{equation}
For $U = (u_1,\dots,u_N)\in (H^1_0(\Omega))^N$, set $\mathscr{R}_\theta U=(\mathscr{R}_\theta u_1,\dots, \mathscr{R}_\theta u_N)$.
\begin{theorem}\label{th:main}
    Fix an integer $k\geq 1$.
    Then under the assumptions \ref{itm:as_a}-\ref{itm:as_d}, Problem~\eqref{eq:main} admits an unbounded sequence of positive solutions $\{(u_{1,l},\cdots,u_{N,l})\colon l\in \mathbb{N}\}$ such that each $u_{i,l}$ is $\mathscr{R}_{2\pi/k}$-invariant but not $\mathscr{R}_{2\pi/(pk)}$-invariant. In particular, $u_{i,l}$ is non-radial.
\end{theorem}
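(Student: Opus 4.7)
My plan is to apply an equivariant $\mathbb{Z}_p$-minimax scheme built from two distinct $\mathbb{Z}_p$-actions: one collapses the problem onto a symmetry-adapted subspace via the principle of symmetric criticality, and the other provides the Fadell--Rabinowitz cohomological index that yields infinitely many critical levels lying outside the $\mathscr{R}_{2\pi/(pk)}$-symmetric locus.

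To \eqref{eq:main} I associate the $C^1$ energy
\begin{equation*}
I(U)=\frac12\sum_{j=1}^{N}\int_\Omega(|\nabla u_j|^2+\lambda_j u_j^2)-\frac14\sum_{j=1}^{N}\mu_j\int_\Omega(u_j^+)^4-\frac14\sum_{i\ne j}\beta_{ij}\int_\Omega(u_i^+)^2(u_j^+)^2,
\end{equation*}
where the positive parts ensure critical points are nonnegative (the strong maximum principle then upgrades to strict positivity once semi-triviality is ruled out). Let $\sigma$ denote the block-wise cyclic permutation of $\{1,\dots,N\}$ sending $pb-p+i\mapsto pb-p+(i\bmod p)+1$, and define $TU:=(\mathscr{R}_{2\pi/(pk)}u_{\sigma^{-1}(j)})_{j=1}^{N}$. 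Assumptions (A)--(D) make $I$ invariant under $G:=\langle\mathscr{R}_{2\pi/k},T\rangle$. On $E_k:=\{U:\mathscr{R}_{2\pi/k}U=U\}$ we have $T^p=\mathscr{R}_{2\pi/k}=\id$, so $T$ generates a $\mathbb{Z}_p$-action on $E_k$; set $X_T:=E_k^T$. Each $U\in X_T$ is parametrized by $(v^1,\dots,v^B)\in(E_k^{(1)})^B$, where $E_k^{(1)}:=\{v\in H^1_0(\Omega):\mathscr{R}_{2\pi/k}v=v\}$, via $u_{pb-p+i}=\mathscr{R}_{2\pi(i-1)/(pk)}v^b$. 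The principle of symmetric criticality guarantees that critical points of $I|_{X_T}$ are weak solutions of \eqref{eq:main}.

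On $X_T$ I introduce the second $\mathbb{Z}_p$-action: the diagonal rotation $S:=\mathscr{R}_{2\pi/(pk)}$ applied componentwise, which satisfies $S^p=\id$ on $X_T\subset E_k$ and preserves $I|_{X_T}$ by the rotational symmetry of $\Omega$. Its fixed set
\begin{equation*}
X':=\{U\in X_T:SU=U\}=\{U\in X_T:v^b\text{ is }\mathscr{R}_{2\pi/(pk)}\text{-invariant for every }b\}
\end{equation*}
has free $\mathbb{Z}_p$-complement $X_T\setminus X'$ because $p$ is prime. Construct a Nehari-type manifold $\mathcal{N}\subset X_T$, built block-by-block and $S$-invariant, on which $I$ is bounded below and whose critical points are precisely the critical points of $I|_{X_T}$ with each $v^b\not\equiv 0$. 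Using the Fadell--Rabinowitz $\mathbb{Z}_p$-cohomological index, define
\begin{equation*}
c_l:=\inf_{\substack{A\subset\mathcal N\setminus X'\\ A\ \text{closed, $S$-invariant},\ \mathrm{ind}_{\mathbb Z_p}(A)\ge l}}\sup_{U\in A}I(U).
\end{equation*}
Compactness of $H^1_0(\Omega)\hookrightarrow L^4(\Omega)$, the $\mathbb{Z}_p$-equivariant Palais--Smale condition on $\mathcal N$, and a standard equivariant deformation argument imply each $c_l$ is a critical value and $c_l\to\infty$, producing an unbounded sequence $\{U_l\}\subset X_T\setminus X'$ where at least one $v^b$ of each $U_l$ fails $\mathscr{R}_{2\pi/(pk)}$-invariance.

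The main obstacle is to strengthen ``some $v^b$ non-invariant'' to ``every $v^b$ non-invariant,'' and this is exactly where assumption (D) becomes decisive. Suppose $v^b$ were $\mathscr{R}_{2\pi/(pk)}$-invariant for some $b$; then every component in block $b$ equals a common $w:=v^b\ge 0$, and for any $j\in\{pb-p+1,\dots,pb\}$ the $j$-th equation of \eqref{eq:main} reduces to
\begin{equation*}
-\Delta w+\lambda^b w=(\mu^b+s_b)w^3+w\sum_{k\notin\text{block }b}\beta_{jk}u_k^2,\qquad s_b:=\sum_{\substack{i\in\text{block }b\\ i\ne j}}\beta_{ij}.
\end{equation*}
By (D) the coefficient $\mu^b+s_b\le 0$, and by (B) each $\beta_{jk}\le 0$, so the right-hand side is nonpositive; testing against $w$ yields $\int(|\nabla w|^2+\lambda^b w^2)\le 0$, forcing $w\equiv 0$. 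The Nehari constraint excludes $v^b\equiv 0$, so every $v^b$ is non-$\mathscr{R}_{2\pi/(pk)}$-invariant, and the same transfers to each $u_{pb-p+i}=\mathscr{R}_{2\pi(i-1)/(pk)}v^b$. Positivity of every component is completed by the strong maximum principle applied to the truncated system, and unboundedness of $\{U_l\}$ follows from $c_l\to\infty$ together with Nehari estimates. The technical heart of the argument is constructing $\mathcal N$ so that it is simultaneously $S$-invariant, excludes semi-trivial configurations, and carries $\mathrm{ind}_{\mathbb Z_p}(\mathcal N\setminus X')=\infty$, so that the minimax scheme produces arbitrarily high critical levels.
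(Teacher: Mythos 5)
Your scheme is essentially the paper's: you work in the same fixed-point space (your $X_T$ is exactly $\mathcal{M}_k^+$), invoke symmetric criticality, use assumption \ref{itm:as_d} together with the Nehari constraint to kill any block whose components coincide, and run a $\mathbb{Z}_p$-index minimax. Note that your ``second'' action $S=\mathscr{R}_{2\pi/(pk)}$ restricted to $X_T$ \emph{is} the block-cyclic permutation $\sigma$ of the paper (since $u_{j+1}=\mathscr{R}_{2\pi/(pk)}u_j$ inside $X_T$), so there is really only one $\mathbb{Z}_p$-action in play; the cohomological index versus the genus-type index is an interchangeable choice here. One structural remark: the paper proves the fixed-point exclusion on the \emph{entire} Nehari manifold using only the identity $\partial_jE^+(U)u_j=0$ (not the PDE), so that $\mathcal{N}_k^+$ is a free $\mathbb{Z}_p$-space outright. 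Your version restricts the minimax to $\mathcal{N}\setminus X'$ and only rules out $X'$ for actual solutions via the equation; as stated this leaves open whether the equivariant deformation preserves $\mathcal{N}\setminus X'$. This is easily repaired by running your testing argument with the Nehari identity alone, which shows $\mathcal{N}\cap X'=\varnothing$ (and in fact that no single block can be $\mathscr{R}_{2\pi/(pk)}$-invariant, which is what upgrades ``some block non-invariant'' to ``every block non-invariant'' without any extra step).

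The genuine gap is the one you flag but do not close: you never exhibit compact, invariant subsets of the Nehari manifold of arbitrarily large index, i.e.\ you do not prove $c_l<\infty$ for every $l$. Without this the minimax produces nothing, and it is the only place where actual geometry of $\Omega$ enters. The paper's Proposition~\ref{prop:spl_pos} does this by choosing, for each $i\le m$ and each block $j$, bump functions supported in a fundamental angular sector of width $2\pi/(pk)$ inside pairwise disjoint radial annuli, symmetrizing them under $\mathscr{R}_{2\pi/k}$, and mapping $\mathbb{S}^{2m-1}\subset\mathbb{C}^m$ equivariantly into $\mathcal{N}_k^+$ via $z\mapsto\sum_i r_i\mathscr{R}_{\theta_i/k}U_i^{(j)}$ followed by the Nehari projection (which is well defined and $C^1$ precisely because the supports are disjoint, so the system decouples). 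Property \ref{it:index} of Proposition~\ref{prop:properties_of_index} then forces $\gamma(\psi(\mathbb{S}^{2m-1}))\ge m$. Your proposal needs this construction (or an equivalent linking argument) to be a proof; everything else in it matches the paper's route.
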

By taking $k = p^j$, we have the following corollary.
\begin{corollary}\label{coro:positive}
    Fix an integer $j\geq 0$.
    Then under the assumptions \ref{itm:as_a}-\ref{itm:as_d}, Problem~\eqref{eq:main} admits an unbounded sequence $S^{(j)} = \{(u_{1,l}^{(j)},\cdots,u_{N,l}^{(j)})\colon l\in\mathbb{N}\}$ of positive solutions such that each $u_{i,l}^{(j)}$ is $\mathscr{R}_{2\pi/p^j}$-invariant but not $\mathscr{R}_{2\pi/p^{j+1}}$-invariant.
	In particular, $u_{i,l}^{(j)}$ is non-radial.
Furthermore, for all $j_1\neq j_2$, we have $S^{(j_1)} \cap S^{(j_2)} = \varnothing$.
\end{corollary}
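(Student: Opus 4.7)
The plan is to reduce the corollary to Theorem~\ref{th:main} by the choice $k = p^j$, and then establish the disjointness claim by a direct symmetry argument about cyclic rotation groups.

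First I would apply Theorem~\ref{th:main} with $k = p^j$ to immediately obtain an unbounded sequence of positive solutions $S^{(j)}$ whose components are $\mathscr{R}_{2\pi/p^j}$-invariant but not $\mathscr{R}_{2\pi/(p \cdot p^j)} = \mathscr{R}_{2\pi/p^{j+1}}$-invariant. Non-radiality then follows because any radial function is $\mathscr{R}_\theta$-invariant for every $\theta$, so the failure of $\mathscr{R}_{2\pi/p^{j+1}}$-invariance precludes radiality (using $p \geq 2$, which is automatic since $p$ is prime).

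Next, for disjointness, I would take $j_1 \neq j_2$ and assume without loss of generality $j_1 < j_2$, so $j_1 + 1 \leq j_2$ and hence $p^{j_1+1}$ divides $p^{j_2}$. The key observation is that
\[
    \mathscr{R}_{2\pi/p^{j_1+1}} = \bigl(\mathscr{R}_{2\pi/p^{j_2}}\bigr)^{p^{j_2-j_1-1}},
\]
so any function that is $\mathscr{R}_{2\pi/p^{j_2}}$-invariant is automatically $\mathscr{R}_{2\pi/p^{j_1+1}}$-invariant (i.e.\ the cyclic group $\langle R_{2\pi/p^{j_1+1}} \rangle$ is contained in $\langle R_{2\pi/p^{j_2}} \rangle$). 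Every element of $S^{(j_2)}$ therefore has this extra invariance, while every element of $S^{(j_1)}$ fails it by construction; consequently $S^{(j_1)} \cap S^{(j_2)} = \varnothing$.

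There is no genuine obstacle here: the corollary is an immediate consequence of the theorem once one observes the inclusion of cyclic subgroups along the chain $p^j \mid p^{j+1} \mid p^{j+2} \mid \cdots$. The only subtlety worth flagging is to interpret ``$\mathscr{R}_{2\pi/m}$-invariance'' as invariance under the cyclic group generated by $R_{2\pi/m}$, so that divisibility of the moduli translates cleanly into subgroup containment and hence the desired implication between invariance properties.
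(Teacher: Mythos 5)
Your proposal is correct and follows the same route as the paper, which simply sets $k=p^j$ in Theorem~\ref{th:main}; your subgroup-containment argument for the disjointness claim (that $\mathscr{R}_{2\pi/p^{j_2}}$-invariance implies $\mathscr{R}_{2\pi/p^{j_1+1}}$-invariance when $j_1<j_2$, contradicting membership in $S^{(j_1)}$) is exactly the intended justification, merely left implicit in the paper's one-line proof.
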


\begin{remark}
	We remark that in \cite{weiNonradialSymmetricBound2007} for two equations it was proved that in the entire space setting there is a ground state solution in each symmetric subspace corresponding to the integer $k$ which is a non-radial positive solution of \eqref{eq:main}, while our result above shows that within each symmetric subspace corresponding to the integer $k$ there exists an infinite sequence of non-radial positive solutions. Furthermore, with $k= p^j$ for different $j\geq 0$, the infinite sequences $S^{(j_1)}$ and $S^{(j_2)}$ are mutually different when $j_1\neq j_2$.
	We do not know whether our multiplicity results would hold for the setting of the entire space.
\end{remark}

Our methods can be adapted to the studies of nodal (sign-changing) solutions.
As an analogue of Theorem~\ref{th:main}, we have the following results for nodal solutions.

\begin{theorem}\label{th:main_nod}
	Fix an integer $k\geq 1$. Then under the assumptions \ref{itm:as_a}-\ref{itm:as_d}, Problem~\eqref{eq:main} admits an unbounded sequence of nodal solutions $\{(u_{1,l},\cdots,u_{N,l})\colon l\in \mathbb{N}\}$ such that each $u_{i,l}$ is $\mathscr{R}_{2\pi/k}$-invariant but not $\mathscr{R}_{2\pi/(2k)}$-invariant. In particular, $u_{i,l}$ is non-radial.
\end{theorem}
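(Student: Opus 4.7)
The plan is to parallel the proof of Theorem~\ref{th:main}, replacing the cyclic $\mathbb{Z}_p$-action (which forced non-radiality for positive solutions) by a twisted $\mathbb{Z}_2$-action that simultaneously forces non-radiality and sign-changing behavior. The central construction is an involution on the $\mathscr{R}_{2\pi/k}$-invariant subspace whose fixed-point set is automatically contained in the set of nodal, non-$\mathscr{R}_{2\pi/(2k)}$-invariant configurations.

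\textbf{Step 1 (Twisted symmetry).} Set $H_k := \{U \in (H^1_0(\Omega))^N : \mathscr{R}_{2\pi/k} U = U\}$ and define on $H_k$ the involution
\[
\sigma U(x) := -U(R_{-\pi/k} x).
\]
Since $\sigma^2 = \mathscr{R}_{2\pi/k}$ acts as the identity on $H_k$, $\sigma$ generates a $\mathbb{Z}_2$-action. Its fixed-point subspace
\[
H_k^\sigma := \{U \in H_k : U(x) = -U(R_{-\pi/k} x) \text{ a.e.}\}
\]
has two decisive features: every non-zero $U \in H_k^\sigma$ has each component sign-changing (as $u_j(x) = -u_j(R_{-\pi/k} x)$ pointwise), and such $U$ cannot be $\mathscr{R}_{2\pi/(2k)}$-invariant (otherwise $U = \mathscr{R}_{\pi/k} U = -U$ forces $U \equiv 0$).

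\textbf{Step 2 (Symmetric criticality and minimax).} The energy functional $E$ associated with \eqref{eq:main} is invariant under rotations and under $U \mapsto -U$, hence under $\sigma$. Palais' principle of symmetric criticality therefore ensures that critical points of $E|_{H_k^\sigma}$ are critical points of $E$ on $(H^1_0(\Omega))^N$, i.e.\ weak solutions of \eqref{eq:main}. Restricted to $H_k^\sigma$, $E$ remains even under $U \mapsto -U$, enabling a symmetric minimax argument. Paralleling the strategy behind Theorem~\ref{th:main}, we pass to a suitable invariant Nehari-type set adapted to nodality, or alternatively apply Bartsch's Fountain Theorem directly on $H_k^\sigma$, using the classical Krasnoselskii $\mathbb{Z}_2$-genus in place of the $\mathbb{Z}_p$-index. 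The Palais-Smale condition follows from the compactness of $H^1_0(\Omega) \hookrightarrow L^4(\Omega)$ on the bounded domain $\Omega$ together with the standard $4$-superlinear structure of the cubic nonlinearity.

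\textbf{Step 3 (Main obstacle).} The principal difficulty lies in the linking geometry. Because the repulsive coupling contributes a positive quartic term $\tfrac{1}{4}\sum_{j \neq k}|\beta_{jk}|\int u_j^2 u_k^2$ to $E$, the functional does not tend to $-\infty$ along every direction, and the finite-dimensional linking subspaces must be chosen so that the negative diagonal terms $-\tfrac{1}{4}\mu_j \int u_j^4$ dominate. We construct them as spans of $\mathscr{R}_{2\pi/k}$-invariant, $\sigma$-odd eigenfunctions of $-\Delta$ concentrated on a single component, thereby killing the cross terms. Combined with standard high-frequency complementary subspaces, this yields an unbounded sequence of pairs $\pm U_\ell$ of critical points with $E(U_\ell) \to \infty$. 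By Step 1, each $U_\ell$ is $\mathscr{R}_{2\pi/k}$-invariant but not $\mathscr{R}_{2\pi/(2k)}$-invariant, with every component sign-changing, completing the proof.
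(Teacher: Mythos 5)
Your Step 1 is exactly the paper's starting point: the fixed-point space of $U\mapsto -\mathscr{R}_{2\pi/(2k)}U$ is the paper's $\mathcal{M}_k^{\mathrm{nod}}$, and the observation that a non-zero anti-periodic component is automatically sign-changing and cannot be $\mathscr{R}_{2\pi/(2k)}$-invariant is the right mechanism for the qualitative conclusions. But that conclusion applies only to the \emph{non-zero} components; your claim that every non-zero $U\in H_k^\sigma$ has \emph{each} component sign-changing is false, since $U\neq 0$ does not prevent individual components from vanishing identically. This is not a quibble: it exposes the real gap in Steps 2--3. Running the Fountain Theorem (or any $\mathbb{Z}_2$-genus minimax based on $U\mapsto -U$) for $E$ on the linear space $H_k^\sigma$ produces critical points of $E$, but nothing in your argument prevents these from being semi-trivial, i.e.\ from having some components identically zero. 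The scalar equation $-\Delta u+\lambda_j u=\mu_j u^3$ already has infinitely many nodal solutions in the corresponding scalar symmetric class, so the minimax values you produce could all be attained by solutions of the form $(u,0,\dots,0)$, for which the assertion that each $u_{i,l}$ is not $\mathscr{R}_{2\pi/(2k)}$-invariant fails. Excluding semi-trivial solutions is the central difficulty for these repulsive systems (the paper's introduction stresses this), and your proposal never invokes the structural hypotheses (C) and (D) -- a sign that something essential is missing.

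The paper handles this by working not on the subspace but on the Nehari-type manifold $\mathcal{N}_k^{\mathrm{nod}}\subset\mathcal{M}_k^{\mathrm{nod}}$, on which every component is non-zero by definition and which is shown to be a natural constraint; multiplicity is then obtained from a $\mathbb{Z}_p$ index associated with the blockwise cyclic permutation $\sigma$ of the components (not the antipodal map), where hypothesis (C) gives the $\sigma$-invariance of $E$ and hypothesis (D) rules out fixed points of $\sigma$ on the Nehari manifold as in Lemma~\ref{lm:no_fixed_point}. The nontrivial input is an equivariant map $\psi\colon\mathbb{S}^{2m-1}\to\mathcal{N}_k^{\mathrm{nod}}$ with $\psi(e^{2\pi i/p}z)=\sigma\psi(z)$, built from anti-periodic bump functions with pairwise disjoint supports, which forces every sublevel set eventually to have arbitrarily large index. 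If you wish to keep a Fountain-type scheme you would still need an extra device (the permutation index as above, an invariant-set/penalization argument, or a Nehari-type constraint) to prevent components from vanishing; as written, the proposal does not prove the theorem.
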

By taking $k = 2^j$, we have the following corollary.
\begin{corollary}\label{coro:nod}
    Fix an integer $j\geq 0$. Then under the assumptions \ref{itm:as_a}-\ref{itm:as_d}, Problem~\eqref{eq:main} admits an unbounded sequence $S^{(j)} = \{(u_{1,l}^{(j)},\cdots,u_{N,l}^{(j)})\colon l\in\mathbb{N}\}$ of nodal solutions such that each $u_{i,l}^{(j)}$ is $\mathscr{R}_{2\pi/2^j}$-invariant but not $\mathscr{R}_{2\pi/2^{j+1}}$-invariant.
	In particular, $u_{i,l}^{(j)}$ is non-radial. Furthermore, for all $j_1\neq j_2$, we have $S^{(j_1)} \cap S^{(j_2)} = \varnothing$.
\end{corollary}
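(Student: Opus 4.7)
The plan is to apply Theorem~\ref{th:main_nod} with the specific choice $k = 2^j$ and then deduce the pairwise disjointness from an elementary property of cyclic rotation groups. First I would invoke Theorem~\ref{th:main_nod} with $k = 2^j$, which directly produces an unbounded sequence $S^{(j)}$ of nodal solutions whose components are $\mathscr{R}_{2\pi/2^j}$-invariant but not $\mathscr{R}_{2\pi/2^{j+1}}$-invariant; non-radiality of every component is then automatic, since a radial function would be invariant under every rotation.

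The only remaining point is the disjointness $S^{(j_1)} \cap S^{(j_2)} = \varnothing$ whenever $j_1 \neq j_2$, which I would verify by contradiction. Without loss of generality assume $j_1 < j_2$ and suppose that some $U = (u_1,\dots,u_N)$ lies in both $S^{(j_1)}$ and $S^{(j_2)}$. From $U \in S^{(j_2)}$ we have $\mathscr{R}_{2\pi/2^{j_2}} u_i = u_i$ for every $i$; since $j_2 \geq j_1 + 1$, the integer $2^{j_2 - j_1 - 1}$ is non-negative, and iterating the symmetry this many times gives
\[
	\mathscr{R}_{2\pi/2^{j_1+1}} u_i \;=\; \bigl(\mathscr{R}_{2\pi/2^{j_2}}\bigr)^{2^{j_2 - j_1 - 1}} u_i \;=\; u_i.
\]
This, however, contradicts the property inherited from $U \in S^{(j_1)}$ that none of the components $u_i$ is $\mathscr{R}_{2\pi/2^{j_1+1}}$-invariant, proving the disjointness.

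There is no real analytical obstacle here: the substantive content, namely the construction of infinitely many sign-changing critical points inside each symmetric subspace together with the verification that each solution genuinely fails the finer invariance under $\mathscr{R}_{2\pi/2^{j+1}}$, is already achieved by Theorem~\ref{th:main_nod}. The corollary is essentially a packaging statement built on the chain of divisibilities $2^{j_1+1} \mid 2^{j_2}$ for $j_2 > j_1$, and the only issue one has to be slightly careful about is the edge case $j_2 = j_1 + 1$, where the iteration reduces to a single application of $\mathscr{R}_{2\pi/2^{j_2}}$ and the argument still closes.
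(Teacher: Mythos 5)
Your proposal is correct and follows the same route as the paper, which simply invokes Theorem~\ref{th:main_nod} with $k=2^j$ and leaves the disjointness as an immediate consequence of the invariance properties. Your explicit divisibility argument $\bigl(\mathscr{R}_{2\pi/2^{j_2}}\bigr)^{2^{j_2-j_1-1}} = \mathscr{R}_{2\pi/2^{j_1+1}}$ just fills in the detail the paper treats as obvious.
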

For $u\in H^1_0(\Omega)$ and $\omega\in (0,+\infty)$, we say that $u$ is periodic with period $\omega$ if $\mathscr{R}_{\omega} u = u$ a.e.\ in $\Omega$. If $\omega$ is the minimal positive number with this property, it is said to be the \emph{minimal period} of $u$.

We have proved that \eqref{eq:main} has infinitely many solutions with period $2\pi/k$ for each positive integer $k$.
It is of interest to know whether \eqref{eq:main} admits solutions with minimal period $2\pi/k$.
For the $2$-coupled system
\begin{equation}\label{eq:2coupled_l}
	\begin{cases}
		-\Delta u + \lambda u = u^3 + \beta uv^2 &\text{in $\Omega$,}\\
		-\Delta v + \lambda v = v^3 + \beta vu^2 &\text{in $\Omega$,}\\
		u=v=0 &\text{on $\partial\Omega$,}\\
	\end{cases}
\end{equation}
we have the following result.
\begin{theorem}\label{th:period}
	Assume $\lambda>0$ and $\beta<0$. For each positive integer $k$, the problem~\eqref{eq:2coupled_l} admits a positive solution $(u,v)$ such that $u,v$ has minimal period $2\pi/k$.
\end{theorem}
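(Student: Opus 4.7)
The plan is to combine Theorem~\ref{th:main} with a ``twisted'' Nehari minimization and an energy comparison that pins down the exact minimal period. First, \eqref{eq:2coupled_l} is a special case of \eqref{eq:main} with $N=2$, $p=2$, $B=1$, and the hypotheses \ref{itm:as_a}--\ref{itm:as_d} are immediate, so Theorem~\ref{th:main} already yields a positive solution $(u_0, v_0)$ with $u_0, v_0 \in H_k := \{u \in H^1_0(\Omega) : \mathscr{R}_{2\pi/k} u = u\}$ that is not $\mathscr{R}_{2\pi/(2k)}$-invariant; the minimal period of $u_0$ is then $2\pi/(km)$ for some odd $m \geq 1$, and the remaining task is to exclude $m \ge 3$.

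I would select a more structured solution via a twisted variational problem. On $H_k \times H_k$ consider the involution $\tau(u, v) := (\mathscr{R}_{2\pi/(2k)} v, \mathscr{R}_{2\pi/(2k)} u)$, whose fixed-point set is the twisted diagonal $T_k := \{(u, \mathscr{R}_{2\pi/(2k)} u) : u \in H_k\}$. Let $J$ denote the energy functional of \eqref{eq:2coupled_l}, and minimize $J$ on the positive Nehari manifold within $T_k$: the compact embedding $H_k \hookrightarrow L^4$, the Palais--Smale condition, and the principle of symmetric criticality for $\tau$ and the $\mathscr{R}_{2\pi/k}$-action produce a positive solution $(u_0, \mathscr{R}_{2\pi/(2k)} u_0)$ of \eqref{eq:2coupled_l} attaining the infimum $c_k$. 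A second-variation computation at the radial diagonal $(u_d, u_d)$, where $u_d$ solves the scalar equation $-\Delta u + \lambda u = (1+\beta) u^3$ (the case $\beta \le -1$ being immediate from non-existence of positive diagonal solutions), combined with $\beta < 0$, rules out $u_0 \in H_{2k}$.

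To exclude each odd $m \ge 3$, I would compare $c_k$ with the restricted minimum $\tilde c_k^{(m)} := \inf\{J(u, \mathscr{R}_{2\pi/(2k)} u) : u \in H_{km},\ u > 0,\ \text{Nehari constraint}\}$. Expanding $u \in H_k$ in angular Fourier modes $u_\ell(r) e^{i\ell k \theta}$, the twist acts as multiplication by $(-1)^\ell$ on mode $\ell$, so for $u \in H_{km}$ only modes $\ell \in m\mathbb{Z}$ survive and the cross term $\int u^2 (\mathscr{R}_{2\pi/(2k)} u)^2$ takes a rigid form. Perturbing the $H_{km}$-minimizer by a small mode $\ell$ with $\ell \notin m\mathbb{Z}$ and reprojecting onto the Nehari manifold, the sign $\beta < 0$ should make the leading-order change of $J$ negative: the repulsive coupling rewards the angular separation of $u$ from $\mathscr{R}_{2\pi/(2k)} u$, which is enhanced precisely when the minimal period equals $2\pi/k$. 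This yields $c_k < \tilde c_k^{(m)}$, forcing $u_0 \notin H_{km}$; combined with $u_0 \notin H_{2k}$, the minimal period of $u_0$ is exactly $2\pi/k$.

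The principal obstacle is this strict energy comparison: one must rigorously identify a Nehari-constrained descent direction from $H_{km}$ into $H_k \setminus H_{km}$ at the restricted minimizer. The definite sign $\beta < 0$ and the Fourier-mode structure should provide this, but executing the Hessian analysis cleanly --- especially ensuring the mandatory Nehari reprojection does not cancel the energy gain --- is the delicate part.
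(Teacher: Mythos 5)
The central step of your argument --- the strict inequality $c_k < \tilde c_k^{(m)}$ for every odd $m \ge 3$ --- is not proved, and it is exactly the hard part. At the restricted minimizer in $H_{km}$ the first variation along a Fourier mode $\ell \notin m\mathbb{Z}$ vanishes, so everything hinges on the sign of the second variation after the Nehari reprojection, i.e.\ on the quadratic form of the linearized operator restricted to that mode. Nothing in your sketch shows this form is negative for some admissible $\ell$: the heuristic that repulsion ``rewards angular separation'' is not an estimate, and the linearization at a constrained minimizer is in general nonnegative on large subspaces. You would also need this for the infinite family of odd $m$ (or at least all odd primes) with no uniform mechanism offered. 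A separate, smaller error: the hypotheses of Theorem~\ref{th:main} are not ``immediate'' for \eqref{eq:2coupled_l}, since assumption~\ref{itm:as_d} reads $1+\beta\le 0$ and fails for $-1<\beta<0$; in that range the twisted Nehari set does contain diagonal elements $(u,u)$ with $u\in H_{2k}$, and excluding $u_0\in H_{2k}$ requires more than a second-variation computation at the \emph{radial} diagonal, because the diagonal equation $-\Delta u+\lambda u=(1+\beta)u^3$ also has non-radial positive solutions in $H_{2k}$.

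The paper takes a genuinely different route that bypasses any mode-by-mode comparison. It identifies $\mathcal{M}_k^+$ with $H^1_0(\Omega)$ via $(\Psi_k u)(r,\theta)=\bigl(u(r,k\theta),u(r,k\theta+\pi/k)\bigr)$, turning the system into a single scalar nonlocal equation $-\Delta_k\hat u+\hat u=\hat u^3+\beta\,\hat u(x)\hat u^2(-x)$; it minimizes the pulled-back energy on the associated Nehari set (Proposition~\ref{prop:minimizer}) and then proves, by polarization (Lemmas~\ref{lm:polarization_ineq} and~\ref{lm:uh_still_in_Nk} together with the maximum-principle argument behind Lemma~\ref{lm:fss}), that the minimizer is foliated Schwarz symmetric. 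A non-radial foliated Schwarz symmetric function has angular minimal period exactly $2\pi$, and undoing $\Psi_k$ multiplies the angular frequency by $k$, which settles all values of $m$ at once. If you wish to salvage your plan, the polarization inequality is precisely the device that converts ``$\beta<0$ favors separation'' into a rigorous monotonicity statement; without it, or some comparable rearrangement argument, your energy comparison remains a conjecture rather than a proof.
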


The paper is organized as follows.
In Section~\ref{sec:multiple_positive_solutions}, we prove the results on non-radial positive solutions. Section~\ref{sec:multiple_nodal_solutions} is devoted to the proofs on non-radial nodal solutions.
In Section~\ref{sec:min_period}, we study the minimal period of the solutions in the angular variable.
Finally, in Section~\ref{sec:ext}, we discuss some further extensions of our main results.

\section{Multiple Non-radial Positive Solutions}
\label{sec:multiple_positive_solutions}
\subsection{The Variational Framework}
For $u\in H^1_0(\Omega)$, we set $\norm{u}_{i}^2 = \int_{\Omega}(\abs{\nabla{u}}^2+\lambda_i u^2)$.
The functional corresponding to~\eqref{eq:main} is
\[
	E^+(u_1,\dots,u_N) = \sum_{i = 1}^{N} \left(\frac{1}{2}\norm{u_i}_i^2-\frac{\mu_i}{4} \int_{\Omega}(u_i^+)^4\right) - \frac{1}{2}\int_{\Omega}\sum_{i\neq j}^N \beta_{ij} u_i^2u_j^2
\]
where $u^+ = \max(u,0)$. It is known (see for instance \cite{tianMultipleSolitaryWave2011}*{Lemma 2.1}) that $E^+$ is a $C^2$ functional and that every nontrivial critical point of $E^+$ is a positive classical solution of~\eqref{eq:main}.

Set for each positive integer $k$ the subspace
\begin{align*}
	\mathcal{M}_k^+ = \{U=(u_1,\dots, u_N)\in(H^1_0(\Omega))^N \colon
	&\text{$u_i = \mathscr{R}_{2\pi/k} u_i$ for $i=1,2,\dots,N$} \\
	&\text{and $u_{j+1} = \mathscr{R}_{2\pi/(pk)}u_{j}$ for $p\nmid j$}\}.
\end{align*}
For example, when $p=2$ and $B=2$, the element $(u_1,u_2,u_3,u_4)\in\mathcal{M}_k^+$ is of the form
\[
	(u_1, \mathscr{R}_{2\pi/(2k)}u_1, u_3, \mathscr{R}_{2\pi/(2k)}u_3)
\]
and each $u_j$ is $\mathscr{R}_{2\pi/k}$-invariant.

Next we define the Nehari-type manifold in $\mathcal{M}_k^+$ by
\begin{gather*}
	\mathcal{N}_k^+ = \{U=(u_1,\dots, u_N)\in \mathcal{M}_k^+ \colon u_j\neq 0,\partial_j E^+ (U)u_j=0,\forall j=1,2,\dots,N\},
\end{gather*}
where
\[
	\partial_j E^+(U)u_j = \int_{\Omega} (\abs{\nabla u_j}^2 + \lambda_j u_j^2) - \mu_j\int_{\Omega}(u_j^+)^4 - \int_{\Omega}\sum_{\substack{k=1\\k\neq j}}^N \beta_{jk}u_j^2 u_k^2.
\]
Under the assumptions~\ref{itm:as_a}-\ref{itm:as_d}, the functional $E^+$ possesses a $\mathbb{Z}_p = \langle\sigma\mid\sigma^p=\id\rangle$ symmetry in the sense that
\[
	E^+(\sigma u) = E^+(u)
\]
for each $u\in (H^1_0(\Omega))^N$.
Here and throughout the paper, $\sigma$ denotes the permutation given by
\begin{equation}\label{eq:action_of_zp}
	\begin{aligned}
		&\sigma (u_1,u_2,\dots,u_p;\dots;u_{N-p+1},u_{N-p+2},\dots,u_N)\\
		=&(u_2,u_3,\dots,u_p,u_1;\dots;u_{N-p+2},u_{N-p+3},\dots,u_N,u_{N-p+1}).
	\end{aligned}
\end{equation}
\begin{lemma}
	The subspace $\mathcal{M}_k^+$ and the submanifold $\mathcal{N}_k^+$ are natural constraints, i.e., every constrained critical point of $E^+$ on them is also a critical point of $E^+$.
\end{lemma}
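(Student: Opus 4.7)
The plan is to handle the two statements separately: $\mathcal{M}_k^+$ is a natural constraint by the Palais principle of symmetric criticality, and $\mathcal{N}_k^+$ by the standard Nehari scaling argument.

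For $\mathcal{M}_k^+$, I would introduce the finite cyclic group $G := \langle T\rangle \cong \mathbb{Z}_{pk}$ acting isometrically on $(H^1_0(\Omega))^N$, where $T := \sigma \circ \mathscr{R}_{-2\pi/(pk)}$ is the composition of the diagonal rotation and the block-cyclic permutation of~\eqref{eq:action_of_zp}. Unwinding $TU = U$ gives $u_{j+1} = \mathscr{R}_{2\pi/(pk)} u_j$ whenever $p \nmid j$, and the relation $T^p = \mathscr{R}_{-2\pi/k}$ (acting diagonally) then forces the $\mathscr{R}_{2\pi/k}$-invariance of each $u_j$, so $\mathcal{M}_k^+ = \operatorname{Fix}(G)$. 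Rotation invariance of each $\|\cdot\|_j$ and of all $L^4$-integrals, together with the $\sigma$-invariance of $E^+$ recorded just above the lemma (which in turn rests on~\ref{itm:as_a} and (C)), yield $E^+ \circ g = E^+$ for every $g \in G$. Palais's principle then gives the claim for $\mathcal{M}_k^+$.

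For $\mathcal{N}_k^+$, the same $G$-symmetry forces $G_j \equiv G_{j'}$ on $\mathcal{M}_k^+$ whenever $j$ and $j'$ share a block, so $\mathcal{N}_k^+$ is cut out inside $\mathcal{M}_k^+$ by only $B$ independent constraints $\tilde G_b := G_{pb-p+1}|_{\mathcal{M}_k^+}$, $b=1,\dots,B$. The Lagrange multiplier identity $dE^+(U) = \sum_b \eta_b\, d\tilde G_b(U)$ on $T_U\mathcal{M}_k^+$, tested against the block-scaling direction $V_{b'} \in T_U \mathcal{M}_k^+$ (equal to $u_j$ at every $j$ in block $b'$ and zero elsewhere), yields $\sum_b \eta_b M_{bb'} = 0$ with $M_{bb'} := d\tilde G_b(U)[V_{b'}]$, because $dE^+(U)[V_{b'}] = p\,G_{pb'-p+1}(U) = 0$. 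The task therefore reduces to proving $M$ is nonsingular.

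The key step is the invertibility of $M$. A direct differentiation gives, with $j := pb-p+1$,
\[
  M_{bb} = -2\Bigl[\mu_j\int_\Omega (u_j^+)^4 + \sum_{\substack{k\in\text{block }b\\ k\neq j}} \beta_{jk}\int_\Omega u_j^2 u_k^2\Bigr],\qquad M_{bb'} = -2\sum_{k\in\text{block }b'}\beta_{jk}\int_\Omega u_j^2 u_k^2 \quad (b'\neq b).
\]
Assumption~(B) makes $M_{bb'} \geq 0$ for $b' \neq b$. Substituting the Nehari identity $G_j(U) = 0$ rewrites the bracket in $M_{bb}$ as $\|u_j\|_j^2 - \sum_{k\notin\text{block }b}\beta_{jk}\int_\Omega u_j^2 u_k^2 \geq \|u_j\|_j^2 > 0$, so $M_{bb} < 0$, and the same substitution yields the diagonal-dominance identity $|M_{bb}| - \sum_{b'\neq b}|M_{bb'}| = 2\|u_j\|_j^2 > 0$. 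The matrix $M$ is symmetric, since up to the factor $1/p$ it coincides with the Hessian at $t = (1,\dots,1)$ of the function $t \mapsto E^+(U_t)$, where $U_t$ rescales block $b$ by $t_b$; strict diagonal dominance then forces $M$ to be negative definite, hence invertible, so $\eta = 0$ and $dE^+|_{\mathcal{M}_k^+}(U) = 0$, concluding the proof via the first paragraph. The principal obstacle is organizational: tracking the blockwise symmetries when setting up $M$ and spotting the right form of the diagonal-dominance identity; once these are in hand the nonsingularity is essentially a one-line use of the Nehari equation.
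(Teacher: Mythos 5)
Your proposal is correct and follows the paper's approach: the same $\mathbb{Z}_{pk}$-action and Palais's symmetric criticality principle handle $\mathcal{M}_k^+$, while for $\mathcal{N}_k^+$ the paper simply defers to the standard Lagrange-multiplier argument of Dancer--Wei--Weth and Tian--Wang, which is exactly the blockwise diagonal-dominance computation you carry out in full (and your identities $M_{bb}\le -2\norm{u_j}_j^2<0$ and $\abs{M_{bb}}-\sum_{b'\neq b}\abs{M_{bb'}}=2\norm{u_j}_j^2$ check out). Note that strict diagonal dominance already gives invertibility of $M$ without the symmetry/negative-definiteness discussion, so that part of your argument is dispensable.
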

\begin{proof}
	It is easy to see that $\mathcal{M}_k^+$ is a fixed point space of an isometric representation of some group. Indeed, we can define the action of $\mathbb{Z}_{pk} = \langle g \mid g^{pk} = \id \rangle$ on $(H^1_0(\Omega))^N$ as
	\[
		g \circ (u_1,\dots,u_N) = \mathscr{R}_{2\pi/{pk}}\sigma (u_1,\dots,u_{N}).
	\]
	Note that $\sigma^p = \id$ and hence that $g^p \circ (u_1, \dots, u_N) = \mathscr{R}_{2\pi/{k}}(u_1, \dots, u_N)$, so the fixed point space of the action is precisely $\mathcal{M}_k^+$. Then it follows from the symmetric criticality principle that $\mathcal{M}_k^+$ is a natural constraint. The rest of the proof is similar to that in \cite{dancerPrioriBoundsMultiple2010} and \cite{tianMultipleSolitaryWave2011}.
\end{proof}
This lemma makes it legitimate to reduce the problem to seeking critical points on $\mathcal{N}_k^+$.

The Palais-Smale condition also holds for $E^+$.
\begin{lemma}\label{lm:ps}
	The restricted functional $E^+\mid_{\mathcal{N}_k^+}$ satisfies the Palais-Smale condition, i.e., each sequence $(u_j)\subset \mathcal{N}_k^+$ such that $E(u_j)$ is bounded and $\nabla E^+\mid_{\mathcal{N}_k^+}\to 0$ has a convergent subsequence.
\end{lemma}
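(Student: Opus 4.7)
My plan is to adapt the standard Palais-Smale verification for Nehari-type constrained functionals on coupled Schr\"odinger systems (compare \cite{tianMultipleSolitaryWave2011} and \cite{dancerPrioriBoundsMultiple2010}), exploiting the repulsive sign $\beta_{ij}\leq 0$ together with the $\mathcal{M}_k^+$-symmetry.

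The boundedness step is immediate: on $\mathcal{N}_k^+$ the identities $G_j(U_m):=\partial_j E^+(U_m)u_{j,m}=0$ combine with the expression for $E^+$ to give
\[
4E^+(U_m)=\sum_{j=1}^{N}\|u_{j,m}\|_j^2-\sum_{i\neq j}\beta_{ij}\int_\Omega u_{i,m}^2 u_{j,m}^2\;\geq\;\sum_{j=1}^{N}\|u_{j,m}\|_j^2,
\]
so $(U_m)$ is bounded in $(H^1_0(\Omega))^N$. I would then extract a weakly convergent subsequence $U_m\rightharpoonup U$; the Rellich-Kondrachov theorem (valid since $n\in\{2,3\}$) yields strong convergence in $L^4(\Omega)^N$, so every cubic and quartic integrand converges. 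The Nehari identity with $\beta_{jk}\leq 0$ also gives $\|u_{j,m}\|_j^2\leq\mu_j\int_\Omega(u_{j,m}^+)^4\leq C\|u_{j,m}\|_j^4$, whence $\|u_{j,m}\|_j\geq c>0$ uniformly in $m$; strong $L^4$-convergence then forces $u_j\neq 0$ for every $j$, placing a candidate limit inside $\mathcal{N}_k^+$.

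For strong $H^1_0$-convergence I would show that the Lagrange multipliers associated to the Nehari constraints vanish in the limit. A direct computation based on assumptions (A)--(C) shows that on $\mathcal{M}_k^+$ the identities $G_{pb-p+1}=G_{pb-p+2}=\cdots=G_{pb}$ hold pointwise, collapsing the defining equations to a single constraint per $\mathbb{Z}_p$-block, so the constrained gradient condition reads $\nabla E^+(U_m)=\sum_{b=1}^{B}\lambda_{b,m}\nabla G_b(U_m)+o(1)$ in $(\mathcal{M}_k^+)^{*}$. Testing against block-scaling generators produces a $B\times B$ linear system $M(U_m)\Lambda_m=o(1)$; the matrix $M(U_m)$ converges to $M(U)$ by strong $L^4$-compactness, and one verifies that $-M(U)$ is positive definite using assumption~\ref{itm:as_d} together with the strict Cauchy-Schwarz inequality $\int_\Omega v^2(\mathscr{R}_{2\pi i/(pk)}v)^2<\int_\Omega v^4$ available on the non-radial portion of $\mathcal{M}_k^+$ (where $\mathscr{R}_{2\pi/(pk)}v\neq v$). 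Hence $\Lambda_m\to 0$, $\nabla E^+(U_m)\to 0$ in $(H^{-1}(\Omega))^N$ (the restriction to $\mathcal{M}_k^+$ being enough by the symmetric criticality already invoked), and testing $\partial_j E^+(U_m)-\partial_j E^+(U)$ against $u_{j,m}-u_j$ absorbs the cubic terms via strong $L^4$-compactness to yield $\|u_{j,m}-u_j\|_j\to 0$.

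The main obstacle is this final invertibility argument: assumption~\ref{itm:as_d} is precisely the Nehari-regularity borderline for a block whose components would all coincide, so strict definiteness of the constraint Jacobian must be recovered by combining (D) with the strict Cauchy-Schwarz inequality forced by the non-trivial rotation $\mathscr{R}_{2\pi/(pk)}$, a delicate interplay paralleling the corresponding steps in \cite{dancerPrioriBoundsMultiple2010} and \cite{tianMultipleSolitaryWave2011}. Every other ingredient is routine once the repulsive sign and the Sobolev compactness are in place.
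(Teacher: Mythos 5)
Your overall skeleton — boundedness of PS sequences from $4E^+(U_m)=\sum_j\norm{u_{j,m}}_j^2-\sum_{i\neq j}\beta_{ij}\int u_{i,m}^2u_{j,m}^2\geq\sum_j\norm{u_{j,m}}_j^2$, compact embedding into $L^4$, the uniform lower bound $\norm{u_{j,m}}_j\geq c>0$, and the reduction to showing that the Lagrange multipliers vanish via a linear system obtained by testing against scaling directions — is correct and is exactly the route of the cited proof in \cite{tianMultipleSolitaryWave2011}. The paper itself gives no details and simply refers to that reference.

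However, your treatment of the one step you yourself flag as ``the main obstacle'' does not work. You propose to prove invertibility of the constraint Jacobian by combining assumption~\ref{itm:as_d} with the strict Cauchy--Schwarz inequality $\int_\Omega v^2(\mathscr{R}_{2\pi/(pk)}v)^2<\int_\Omega v^4$. This fails for three reasons. First, the logic does not close: assumption~\ref{itm:as_d} gives the \emph{upper} bound $\mu_j\int(u_j^+)^4\leq\mu_j\int u_j^4\leq\big(\sum_{i\in b,i\neq j}\abs{\beta_{ij}}\big)\int u_j^4$, while strict Cauchy--Schwarz gives an \emph{upper} bound $\sum_{k}\abs{\beta_{jk}}\int u_j^2u_k^2<\sum_k\abs{\beta_{jk}}\int u_j^4$ on the off-diagonal row sum; two upper bounds against the same quantity cannot be compared, so no diagonal dominance follows. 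Second, a strict inequality carries no uniform gap along the sequence $(U_m)$, so even if it held at each $U_m$ it would not keep $M(U_m)^{-1}$ bounded. Third, strictness at the weak limit $U$ requires knowing that $\mathscr{R}_{2\pi/(pk)}u_j\neq u_j$, which via Lemma~\ref{lm:no_fixed_point} needs $U\in\mathcal{N}_k^+$ — but membership of the weak limit in $\mathcal{N}_k^+$ is only available \emph{after} strong convergence, so the argument is circular. The correct (and much simpler) observation, which is what \cite{tianMultipleSolitaryWave2011} and \cite{dancerPrioriBoundsMultiple2010} use, is that the Nehari identities themselves furnish strict diagonal dominance with a quantitative gap: $G_j(U_m)=0$ reads
\begin{equation*}
  \mu_j\int_\Omega(u_{j,m}^+)^4=\norm{u_{j,m}}_j^2+\sum_{k\neq j}\abs{\beta_{jk}}\int_\Omega u_{j,m}^2u_{k,m}^2\;>\;\sum_{k\neq j}\abs{\beta_{jk}}\int_\Omega u_{j,m}^2u_{k,m}^2,
\end{equation*}
so the $N\times N$ matrix with diagonal $-2\mu_j\int(u_{j,m}^+)^4$ and off-diagonal $-2\beta_{jk}\int u_{j,m}^2u_{k,m}^2\geq 0$ is strictly diagonally dominant with margin $2\norm{u_{j,m}}_j^2\geq 2c>0$ uniformly in $m$, hence uniformly invertible; the multipliers then tend to zero and the rest of your argument goes through. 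Note in particular that assumption~\ref{itm:as_d} plays no role in the Palais--Smale verification — it is needed only in Lemma~\ref{lm:no_fixed_point} to exclude fixed points of the $\mathbb{Z}_p$-action — so importing it here is a sign that the mechanism has been misidentified.
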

The proof of this lemma is exactly same as that in \cite{tianMultipleSolitaryWave2011}.
\subsection{A $\mathbb{Z}_p$ Index}
In this section, we introduce a $\mathbb{Z}_p$ index theory, which is used in the estimate of the number of critical points. The classical works here are \cites{wangIndexTheory1990,wangBorsukUlamTheorem1989}.

Now we define an index associated with $\mathbb{Z}_p$, where the action of $\mathbb{Z}_p = \langle\sigma\mid\sigma^p=\id\rangle$ is defined by $\sigma$ as~\eqref{eq:action_of_zp}.
\begin{definition}
	For any closed $\sigma$-invariant subset $A\subset\mathcal{N}_k^+$, define the index
	\[
		\gamma(A) = \min \Big(\{\infty\}\cup\{m\in\mathbb{N}\colon \text{$\exists h\in C(A;\mathbb{C}^m\setminus\{0\})$ satisfying $h(\sigma U) = \mathrm{e}^{2\pi\mathrm{i}/p} h(U)$}\}\Big).
	\]
\end{definition}
In particular, $\gamma(\varnothing) = 0$ and $\gamma(A) = \infty$ if $A$ contains a fixed point of $\sigma$.

In order to use the $\mathbb{Z}_p$ index theory, we need the following lemma to exclude the existence of fixed point under the action of $\mathbb{Z}_p$.
\begin{lemma}\label{lm:no_fixed_point}
	Under the assumptions \ref{itm:as_a}-\ref{itm:as_d}, for $p\mid j$, $u_{j+1}=u_{j+2}=\dots=u_{j+p}$ cannot hold for $(u_1,\dots,u_N)\in^\mathcal{N}_k+$. Therefore, there is no fixed point on $\mathcal{N}_k^+$ under the action of $\mathbb{Z}_p$.
\end{lemma}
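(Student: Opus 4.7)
The plan is to argue by contradiction, using the Nehari identity together with assumption \ref{itm:as_d} to rule out the degenerate configuration in which one whole block collapses to a single function.

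First I would suppose that $p\mid j$, write $j=p(b-1)$ so that the indices $j+1,\dots,j+p$ coincide with the block $I_b=\{pb-p+1,\dots,pb\}$, and assume towards a contradiction that there exists $U=(u_1,\dots,u_N)\in\mathcal{N}_k^+$ with $u_{j+1}=\cdots=u_{j+p}=:u$. By definition of $\mathcal{N}_k^+$ we have $u\neq 0$. Using assumption~\ref{itm:as_a}, all $\lambda_i$ with $i\in I_b$ coincide, so the norm $\|u\|_i^2$ is the same value for every $i\in I_b$; denote it by $\|u\|^2>0$.

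Next I would write out the Nehari identity $\partial_i E^+(U)u_i=0$ for a single fixed $i\in I_b$ and split the coupling sum into the contributions coming from inside the block and from the rest:
\[
  \|u\|^2 = \mu_i\int_\Omega (u^+)^4 + \Bigl(\sum_{k\in I_b,\,k\neq i}\beta_{ik}\Bigr)\int_\Omega u^4 + \sum_{k\notin I_b}\beta_{ik}\int_\Omega u^2 u_k^2.
\]
Assumption~\ref{itm:as_d} gives $\sum_{k\in I_b,\,k\neq i}\beta_{ik}\leq -\mu_i$, and since $\int_\Omega u^4\geq 0$,
\[
  \mu_i\int_\Omega (u^+)^4 + \Bigl(\sum_{k\in I_b,\,k\neq i}\beta_{ik}\Bigr)\int_\Omega u^4 \leq \mu_i\Bigl(\int_\Omega (u^+)^4 - \int_\Omega u^4\Bigr) = -\mu_i\int_\Omega (u^-)^4 \leq 0,
\]
where $u^-=\max(-u,0)$. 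Meanwhile assumption~(B) says $\beta_{ik}\leq 0$ for $k\neq i$ and $\int_\Omega u^2 u_k^2\geq 0$, so the outside-block term is also non-positive. Combining, the right-hand side is $\leq 0$, contradicting $\|u\|^2>0$.

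For the second assertion, I would observe that a fixed point of the $\mathbb{Z}_p$-action defined by $\sigma$ in \eqref{eq:action_of_zp} is exactly a tuple in which, for every $b$, the coordinates $u_{pb-p+1},\dots,u_{pb}$ are all equal. The first part of the lemma forbids this on $\mathcal{N}_k^+$, so $\sigma$ has no fixed point there. The only delicate step in the whole argument is handling the discrepancy between $\int_\Omega(u^+)^4$ and $\int_\Omega u^4$, which is resolved cleanly by the elementary identity $\int_\Omega u^4-\int_\Omega(u^+)^4=\int_\Omega(u^-)^4\geq 0$; everything else is a direct consequence of signs in the hypotheses.
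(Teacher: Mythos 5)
Your proof is correct and follows essentially the same route as the paper's: assume a block collapses, write the Nehari identity for one component, use assumption (D) on the in-block couplings and the sign condition (B) on the out-of-block couplings to force $\|u\|^2\le 0$, a contradiction. Your explicit handling of the $\int_\Omega(u^+)^4$ versus $\int_\Omega u^4$ discrepancy via $\int_\Omega u^4-\int_\Omega(u^+)^4=\int_\Omega(u^-)^4\ge 0$ is in fact slightly more careful than the paper's write-up, but it is the same argument.
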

\begin{proof}
	Suppose the assertion of the lemma is false. Without loss of generality, assume that $U=(u_1,\dots,u_N)\in\mathcal{N}_k^+$ and $u_1 = u_2 = \dots = u_p$. By the assumption~\ref{itm:as_d} and the definition of $\mathcal{N}_k^+$, we obtain
	\begin{align*}
		0<{}& \int_{\Omega} (\abs{\nabla u_1}^2 + \lambda_1 u_1^2)\\
		={}& \mu_1 \int_{\Omega}(u_1^+)^4 + \int_{\Omega}\sum_{\substack{k=2}}^N \beta_{jk}u_1^2 u_k^2\\
		\leq{}& \left(\mu_1 + \sum_{k=2}^N \beta_{ik}\right) \int_{\Omega}u_1^4\\
		\leq{}&0,
	\end{align*}
	which is impossible.
\end{proof}
\begin{remark}\label{rem:sym}
	As $u_{j+1} = \mathscr{R}_{2\pi/(pk)}u_{j}$ for $p\nmid j$ in $\mathcal{N}_k^+$, this lemma implies that functions in $\mathcal{N}_k^+$ cannot be $\mathscr{R}_{2\pi/(pk)}$-invariant.
\end{remark}
Let $\mathcal{N}_k^c = \{U\in\mathcal{N}_k^+\colon E^+(U)\leq c\}$. The Palais-Smale condition ensures the validity of the following deformation lemma.
\begin{proposition}\label{prop:index1}
	Let $c\in\mathbb{R}$, and let $\mathcal{O}$ be a $\sigma$-invariant open neighborhood of $K_c$ in $\mathcal{N}_k^+$. Then there exists $\varepsilon>0$ and a $C^1$-deformation $\eta\colon [0,1]\times \mathcal{N}_k^{c+\varepsilon}\setminus\mathcal{O}\to\mathcal{N}_k^{c+\varepsilon}$ such that
	\begin{enumerate}[(a)]
		\item $\eta(0,\cdot) = \id$,
		\item $\eta(1, \mathcal{N}_k^{c+\varepsilon}\setminus\mathcal{O}) \subset \mathcal{N}_k^{c-\varepsilon}$,
		\item $\eta(t,\sigma(\cdot)) = \sigma\eta(t,\cdot)$ for each $t\in[0,1]$.
	\end{enumerate}
\end{proposition}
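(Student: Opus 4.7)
The plan is to produce this as a standard equivariant deformation lemma on the Nehari-type manifold $\mathcal{N}_k^+$, using the restricted gradient flow of $E^+$ and averaging over $\mathbb{Z}_p$ to obtain $\sigma$-equivariance. Since $E^+$ is $\sigma$-invariant, $\mathcal{N}_k^+$ is a $\sigma$-invariant $C^2$-submanifold of $\mathcal{M}_k^+$, and the Palais--Smale condition holds on $\mathcal{N}_k^+$ by Lemma~\ref{lm:ps}, all the ingredients of the classical construction (see e.g.\ the framework in \cite{tianMultipleSolitaryWave2011}) are in place; the only point that requires care is to keep the deformation simultaneously tangent to $\mathcal{N}_k^+$ and equivariant under the $\mathbb{Z}_p$-action generated by $\sigma$.

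First I would use the PS condition to show that the critical set $K_c = \{U\in\mathcal{N}_k^+\colon E^+(U)=c,\ \nabla E^+|_{\mathcal{N}_k^+}(U)=0\}$ is compact, and that for any $\sigma$-invariant open neighborhood $\mathcal{O}\supset K_c$ there exist $\delta,\varepsilon_0>0$ such that $\|\nabla E^+|_{\mathcal{N}_k^+}(U)\|\geq \delta$ for every $U\in(\mathcal{N}_k^{c+\varepsilon_0}\setminus\mathcal{N}_k^{c-\varepsilon_0})\setminus\mathcal{O}'$, where $\mathcal{O}'$ is a slightly shrunken $\sigma$-invariant neighborhood of $K_c$ with $\overline{\mathcal{O}'}\subset\mathcal{O}$. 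Next I would build a locally Lipschitz pseudo-gradient vector field $V$ for $E^+|_{\mathcal{N}_k^+}$ on the regular set in the usual way via a partition of unity, and then enforce equivariance by replacing $V$ with its symmetrized version
\begin{equation*}
\widetilde V(U) \;=\; \frac{1}{p}\sum_{i=0}^{p-1} \sigma^{-i} V(\sigma^i U).
\end{equation*}
Since $E^+\circ\sigma = E^+$ and $\sigma$ is an isometry preserving $\mathcal{N}_k^+$, the field $\widetilde V$ is still tangent to $\mathcal{N}_k^+$, is locally Lipschitz, satisfies $\widetilde V(\sigma U)=\sigma \widetilde V(U)$, and remains a pseudo-gradient for $E^+|_{\mathcal{N}_k^+}$ on the regular set.

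I would then multiply $\widetilde V$ by a $\sigma$-invariant locally Lipschitz cutoff $\chi$ that vanishes on $\mathcal{O}'\cup\{E^+<c-\varepsilon\}\cup\{E^+>c+\varepsilon\}$ and equals $1$ on $(\mathcal{N}_k^{c+\varepsilon/2}\setminus\mathcal{N}_k^{c-\varepsilon/2})\setminus\mathcal{O}$ for a sufficiently small $\varepsilon\in(0,\varepsilon_0)$; $\sigma$-invariance of $\chi$ is automatic because every defining set is $\sigma$-invariant. The flow $\dot\eta = -\chi(\eta)\widetilde V(\eta)/\|\widetilde V(\eta)\|^2$ with $\eta(0,\cdot)=\id$ then exists globally on $\mathcal{N}_k^+$ by completeness of the vector field (the norm inequality $\|\chi \widetilde V/\|\widetilde V\|^2\|\leq 1/\delta$ on the transition region prevents finite-time blow-up), and a standard time estimate shows that a suitable rescaling to $t\in[0,1]$ achieves $\eta(1,\mathcal{N}_k^{c+\varepsilon}\setminus\mathcal{O})\subset \mathcal{N}_k^{c-\varepsilon}$. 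Equivariance $\eta(t,\sigma U) = \sigma\eta(t,U)$ follows from uniqueness of ODE solutions together with $\sigma$-equivariance of the vector field $\chi\widetilde V$.

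The main obstacle I expect is the interplay between the submanifold constraint and the equivariance: one must verify that the symmetrization step preserves tangency to $\mathcal{N}_k^+$ (which follows because $\sigma_\ast$ sends $T_U\mathcal{N}_k^+$ to $T_{\sigma U}\mathcal{N}_k^+$ thanks to $\sigma$-invariance of the defining equations of $\mathcal{N}_k^+$) and that $\widetilde V$ still controls $\|\nabla E^+|_{\mathcal{N}_k^+}\|$ from below; this last point uses $\sigma^\ast(\nabla E^+|_{\mathcal{N}_k^+})(U) = \nabla E^+|_{\mathcal{N}_k^+}(\sigma U)$ and convexity of the norm. Once these routine but slightly delicate checks are in place, conclusions (a)--(c) are immediate from the construction, and no new ideas beyond those already used in \cite{tianMultipleSolitaryWave2011} and \cite{dancerPrioriBoundsMultiple2010} are needed.
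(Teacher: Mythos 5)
Your construction is the standard equivariant deformation argument (symmetrized pseudo-gradient, $\sigma$-invariant cutoff, rescaled flow), which is exactly what the paper relies on: it omits the proof, noting only that the Palais--Smale condition from Lemma~\ref{lm:ps} makes the classical construction of \cite{tianMultipleSolitaryWave2011} and \cite{dancerPrioriBoundsMultiple2010} go through. Your proposal is correct and fills in precisely those standard details, including the two points that genuinely need checking (tangency of the averaged field to $\mathcal{N}_k^+$ and preservation of the pseudo-gradient inequalities under averaging).
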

With the deformation lemma, one can prove the following elementary properties of the index.
\begin{proposition}\label{prop:properties_of_index}
    Let $A, B\subset\mathcal{N}_k^+$ be closed and $\sigma$-invariant.
    \begin{enumerate}[(a)]
        \item If $A\subset B$, then $\gamma(A)\leq\gamma(B)$.
        \item $\gamma(A\cup B)\leq \gamma(A) + \gamma(B)$.
        \item If $g\colon A\to\mathcal{N}_k^+$ is continuous and $\sigma$-equivariant, i.e.,
			\[
				g(\sigma(U)) = \sigma g(U), \qquad \text{for all $U\in A$},
			\]
			then $\gamma(A)\leq\gamma(\overline{g(A)})$.
        \item If $\gamma(A)>1$ and $A$ does not contain fixed points of $\sigma$, then $A$ is an infinite set. \label{it:index_d}
        \item If $A$ is compact and $A$ does not contain fixed points of $\sigma$, then $\gamma(A)<\infty$, and there exists a relatively open and $\sigma$-invariant neighborhood $N$ of $A$ in $\mathcal{N}_k^+$ such that $\gamma(A) = \gamma(\bar{N})$.
        \item If $S$ is the boundary of a bounded and $\sigma$-invariant neighborhood of zero in an $m$-dimensional complex normed vector space and $\Psi\colon S\to\mathcal{M}$ is a continuous map satisfying $\Psi(e^{2\pi i/p} U)=\sigma(\Psi(U))$, then $\gamma(\Psi(S))\geq m$. \label{it:index}
    \end{enumerate}
\end{proposition}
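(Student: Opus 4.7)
The plan is to verify the six properties in order, following the pattern of the classical Krasnosel'ski\u{\i}--Yang and Wang $\mathbb{Z}_p$ indices; the only genuinely topological ingredient is a $\mathbb{Z}_p$ Borsuk--Ulam theorem, which I would import from~\cites{wangIndexTheory1990,wangBorsukUlamTheorem1989} to handle~(\ref{it:index}).

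Properties~(a), (b) and (c) are formal consequences of the definition. Monotonicity~(a) follows by restricting any admissible test map $h\in C(B,\mathbb{C}^m\setminus\{0\})$ to $A$; the composition property~(c) follows by composing an equivariant witness $h$ defined on $\overline{g(A)}$ with $g$ itself, since $h\circ g$ inherits equivariance and avoids $0$. For subadditivity~(b), given equivariant witnesses $h_A\colon A\to\mathbb{C}^m\setminus\{0\}$ and $h_B\colon B\to\mathbb{C}^n\setminus\{0\}$, I would first extend each to continuous maps $\tilde h_A,\tilde h_B$ on the closed set $A\cup B$ via the Dugundji extension theorem, then restore equivariance by the character-weighted average
\[
    h_A^{*}(U)=\frac{1}{p}\sum_{j=0}^{p-1} e^{-2\pi \mathrm{i} j/p}\,\tilde h_A(\sigma^j U),
\]
which still equals $h_A$ on $A$ because $h_A$ was already equivariant. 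The concatenation $(h_A^{*},h_B^{*})\colon A\cup B\to\mathbb{C}^{m+n}$ is $\sigma$-equivariant and nonzero on both $A$ and $B$, so $\gamma(A\cup B)\leq m+n$.

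For~(d), I would argue by contradiction: if $A$ were finite, then, since $\sigma$ has no fixed points on $A$, the action of $\mathbb{Z}_p$ on $A$ would be free; picking one representative $U_1,\dots,U_r$ per orbit and setting $h(\sigma^j U_i)=e^{2\pi \mathrm{i} j/p}$ yields a continuous (the domain is discrete) equivariant map into $\mathbb{C}\setminus\{0\}$, whence $\gamma(A)\leq 1$, contradicting $\gamma(A)>1$. For~(e), compactness together with freeness of the action allows a standard partition-of-unity construction on the quotient $A/\mathbb{Z}_p$ to produce an equivariant map $h\colon A\to\mathbb{C}^m\setminus\{0\}$ for some finite $m$, so $\gamma(A)<\infty$; extending $h$ equivariantly (again by Dugundji plus averaging) to a neighborhood of $A$ and using that $|h|$ is bounded away from $0$ on the compact set $A$, I obtain a closed $\sigma$-invariant tubular neighborhood $\bar N$ on which $h$ is still nonzero, giving $\gamma(\bar N)\leq\gamma(A)$, while the reverse inequality is furnished by monotonicity~(a).

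The main obstacle is~(\ref{it:index}), the Borsuk--Ulam type lower bound. Here $V$ is an $m$-dimensional complex normed space with the canonical free $\mathbb{Z}_p$-action (multiplication by $e^{2\pi \mathrm{i}/p}$), so the invariant boundary $S$ is equivariantly homeomorphic to the unit sphere $S^{2m-1}\subset\mathbb{C}^m$. If one had $\gamma(\Psi(S))<m$, there would exist an equivariant witness $h\colon\Psi(S)\to\mathbb{C}^{m-1}\setminus\{0\}$, and then $h\circ\Psi\colon S\to\mathbb{C}^{m-1}\setminus\{0\}$ would be a continuous $\mathbb{Z}_p$-equivariant map from the $(2m-1)$-sphere into a $\mathbb{Z}_p$-representation of strictly smaller complex dimension avoiding the origin, directly contradicting the $\mathbb{Z}_p$ Borsuk--Ulam theorem established in~\cites{wangIndexTheory1990,wangBorsukUlamTheorem1989}. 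This is where genuine algebraic topology enters the proof; all the remaining items reduce to manipulations of the definition together with Dugundji-type equivariant extensions.
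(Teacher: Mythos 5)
Your proof is correct and takes the same route the paper intends: the paper omits the proof of this proposition, deferring to \cite{tianMultipleSolitaryWave2011}, \cite{dancerPrioriBoundsMultiple2010} and the classical $\mathbb{Z}_p$-index papers \cites{wangIndexTheory1990,wangBorsukUlamTheorem1989}, and your arguments (restriction for (a), character-averaged Dugundji extensions for (b), composition for (c), freeness of the action for (d) and (e), reduction to the $\mathbb{Z}_p$ Borsuk--Ulam theorem for (f)) are precisely the standard ones in those references. One small correction in (f): the boundary of a bounded $\sigma$-invariant neighborhood of zero need not be equivariantly homeomorphic to $\mathbb{S}^{2m-1}$ (it can, for instance, be disconnected), so instead of identifying $S$ with the sphere you should invoke the $\mathbb{Z}_p$ Borsuk--Ulam theorem of \cite{wangBorsukUlamTheorem1989} in the form stated for boundaries of bounded invariant open neighborhoods of the origin, after which the composition $h\circ\Psi$ argument goes through verbatim. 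It is also worth stating explicitly that (d) and (e) use the primality of $p$: absence of fixed points implies freeness of the $\mathbb{Z}_p$-action only because every nontrivial subgroup of $\mathbb{Z}_p$ is the whole group.
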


Define
\begin{equation*}
	c_j = \inf \{c\in\mathbb{R}\colon \gamma(\mathcal{N}_k^c)\geq j\}
\end{equation*}
for each positive integer $j$.
\begin{proposition}
	For all $c\in\mathbb{R}$, we have $\gamma(K_c)<\infty$, and there exists $\varepsilon>0$ such that
	\[
		\gamma(\mathcal{N}_k^{c+\varepsilon}) \leq \gamma(\mathcal{N}_k^{c-\varepsilon}) + \gamma(K_c).
	\]
\end{proposition}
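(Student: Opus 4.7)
The plan is to derive both assertions from the deformation lemma (Proposition~\ref{prop:index1}) together with the compactness/neighborhood properties of the $\mathbb{Z}_p$ index listed in Proposition~\ref{prop:properties_of_index}. The backbone is the standard pattern in equivariant critical point theory: control $K_c$ by a small $\sigma$-invariant neighborhood of the same index, then use the deformation to push everything outside that neighborhood down to the level $c-\varepsilon$, and finally combine these via subadditivity.

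First I would verify that $K_c$ is compact and contains no fixed points of $\sigma$. Compactness is immediate from the Palais--Smale condition: any sequence in $K_c$ is automatically a Palais--Smale sequence at level $c$, so Lemma~\ref{lm:ps} provides a convergent subsequence whose limit lies in $K_c$ by continuity of $E^+$ and of the gradient. The absence of fixed points is even easier, since by Lemma~\ref{lm:no_fixed_point} the entire submanifold $\mathcal{N}_k^+$ has no fixed point of $\sigma$. Hence Proposition~\ref{prop:properties_of_index}\ref{it:index_d} (really the next item, the ``compactness'' property) applies and yields $\gamma(K_c) < \infty$ together with a relatively open, $\sigma$-invariant neighborhood $\mathcal{O}$ of $K_c$ in $\mathcal{N}_k^+$ such that $\gamma(\overline{\mathcal{O}}) = \gamma(K_c)$.

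Second, I would feed this $\mathcal{O}$ into Proposition~\ref{prop:index1} to obtain $\varepsilon > 0$ and a $\sigma$-equivariant continuous deformation $\eta$ with $\eta(1, \mathcal{N}_k^{c+\varepsilon} \setminus \mathcal{O}) \subset \mathcal{N}_k^{c-\varepsilon}$. Setting $A := \mathcal{N}_k^{c+\varepsilon} \setminus \mathcal{O}$ (closed and $\sigma$-invariant in $\mathcal{N}_k^+$, since $E^+$ is continuous and $\mathcal{O}$ is open and $\sigma$-invariant), I obtain the covering $\mathcal{N}_k^{c+\varepsilon} \subset A \cup \overline{\mathcal{O}}$. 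Subadditivity then gives
\[
    \gamma(\mathcal{N}_k^{c+\varepsilon}) \leq \gamma(A) + \gamma(\overline{\mathcal{O}}) = \gamma(A) + \gamma(K_c).
\]
Applying the monotonicity and equivariant mapping properties to $\eta(1,\cdot)\colon A \to \mathcal{N}_k^+$, together with the fact that $\overline{\eta(1,A)} \subset \mathcal{N}_k^{c-\varepsilon}$ (the sublevel set is closed), yields $\gamma(A) \leq \gamma(\overline{\eta(1,A)}) \leq \gamma(\mathcal{N}_k^{c-\varepsilon})$, and the desired inequality follows by combining the two bounds.

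There is no serious obstacle once the deformation lemma and the neighborhood property of the index are in hand; the only technical care needed is logistical — making sure $\mathcal{O}$ is chosen open (so that the deformation lemma applies) while $\overline{\mathcal{O}}$ still has the same index as $K_c$, and that the complement $A$ is closed so that $\gamma(A)$ is defined. Both points are provided directly by Propositions~\ref{prop:properties_of_index} and~\ref{prop:index1}, so the proof amounts to assembling these facts in the correct order.
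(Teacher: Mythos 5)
Your proof is correct and is precisely the standard index-theoretic argument that the paper itself omits, deferring to \cite{tianMultipleSolitaryWave2011} and \cite{dancerPrioriBoundsMultiple2010}: compactness and fixed-point-freeness of $K_c$ give a neighborhood of equal index via Proposition~\ref{prop:properties_of_index}, the deformation of Proposition~\ref{prop:index1} pushes the complement down to level $c-\varepsilon$, and subadditivity plus monotonicity under equivariant maps assemble the inequality. No gaps; your ordering of the steps (choose the neighborhood first, then invoke the deformation lemma with that neighborhood) is exactly right.
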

\begin{proposition}\label{prop:index_of_critical_points}
	If $c:=c_j=c_{j+1}=\dots=c_{j+d}<\infty$, then $\gamma(K_c)\geq d+1$.
\end{proposition}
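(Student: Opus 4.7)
The plan is a standard Krasnoselski–type index–minimax argument: argue by contradiction, assume $\gamma(K_c) \leq d$, and use the deformation lemma to push a sublevel set of high index past the critical set $K_c$, yielding $\mathcal{N}_k^{c-\varepsilon}$ of index at least $j$, in contradiction with $c_j = c$.

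First I would verify that the hypotheses of Proposition~\ref{prop:properties_of_index}\ref{it:index_d} apply to $K_c$. Compactness of $K_c$ follows from the Palais--Smale condition in Lemma~\ref{lm:ps} (every sequence in $K_c$ is a Palais--Smale sequence at level $c$), and Lemma~\ref{lm:no_fixed_point} guarantees that no point of $\mathcal{N}_k^+$ is fixed by $\sigma$, so a fortiori $K_c$ contains no such fixed points. Hence property (e) produces a relatively open, $\sigma$-invariant neighborhood $N \supset K_c$ in $\mathcal{N}_k^+$ with $\gamma(\bar N) = \gamma(K_c) \leq d$. Feeding this $N$ into Proposition~\ref{prop:index1} as $\mathcal{O}$ yields some $\varepsilon > 0$ and an equivariant deformation $\eta(1,\cdot) \colon \mathcal{N}_k^{c+\varepsilon} \setminus N \to \mathcal{N}_k^{c-\varepsilon}$. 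Note that $A := \mathcal{N}_k^{c+\varepsilon} \setminus N$ is closed (sublevel set minus an open set) and $\sigma$-invariant (both $E^+$ and $N$ are $\sigma$-invariant), so the index $\gamma(A)$ is well defined.

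The contradiction then follows by pinching $\gamma(A)$ between two values. For the lower bound, subadditivity (Proposition~\ref{prop:properties_of_index}(b)) combined with $\gamma(\bar N) \leq d$ yields
\[
\gamma(\mathcal{N}_k^{c+\varepsilon}) \leq \gamma(A) + d,
\]
and since $c = c_{j+d}$ forces $\gamma(\mathcal{N}_k^{c+\varepsilon}) \geq j+d$, we obtain $\gamma(A) \geq j$. For the upper bound, property (c) applied to the continuous $\sigma$-equivariant map $\eta(1,\cdot)$, together with monotonicity (a), gives
\[
\gamma(A) \leq \gamma\bigl(\overline{\eta(1, A)}\bigr) \leq \gamma(\mathcal{N}_k^{c-\varepsilon}).
\]
Chaining the two inequalities produces $\gamma(\mathcal{N}_k^{c-\varepsilon}) \geq j$, contradicting $c = c_j = \inf\{c' : \gamma(\mathcal{N}_k^{c'}) \geq j\}$. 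The degenerate case $K_c = \varnothing$ (so $\gamma(K_c) = 0$) is handled by the same argument with $N = \varnothing$ and also covers the case $d = 0$, where one merely needs $K_c \neq \varnothing$. I do not anticipate any serious obstacle: once Proposition~\ref{prop:properties_of_index}(e) furnishes an excision neighborhood with the same index as $K_c$, the rest is bookkeeping with the index properties already in hand, so the substantive work has been done in setting up the index and deformation lemma.
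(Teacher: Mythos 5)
Your proof is correct and is exactly the standard excision--deformation argument that the paper itself omits, deferring to \cite{tianMultipleSolitaryWave2011} and \cite{dancerPrioriBoundsMultiple2010}; the chain $j+d \le \gamma(\mathcal{N}_k^{c+\varepsilon}) \le \gamma(\overline{\mathcal{N}_k^{c+\varepsilon}\setminus N}) + \gamma(\bar N) \le \gamma(\mathcal{N}_k^{c-\varepsilon}) + \gamma(K_c) \le (j-1)+\gamma(K_c)$ is the intended proof, and your verification of compactness of $K_c$ via Lemma~\ref{lm:ps} and absence of fixed points via Lemma~\ref{lm:no_fixed_point} is the right way to justify applying Proposition~\ref{prop:properties_of_index}(e).
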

The proofs of these propositions are similar to those in \cite{tianMultipleSolitaryWave2011} and \cite{dancerPrioriBoundsMultiple2010}, and we omit them.
\subsection{Multiplicity Result}
We can derive the existence of infinitely many solutions by showing each $c_j<\infty$.
\begin{proposition}\label{prop:spl_pos}
	Denote by $\mathbb{S}^{2m-1}$ the unit sphere in $\mathbb{C}^m$. For each positive integer $m$, there exists a continuous map $\psi\colon \mathbb{S}^{2m-1} \to\mathcal{N}_k^+$, such that
	\[
		\psi( e^{2\pi i/p} z) = \sigma\psi(z).
	\]
\end{proposition}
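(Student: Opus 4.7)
The plan is two-stage: first construct a continuous $\sigma$-equivariant map $\tilde\psi\colon\mathbb{S}^{2m-1}\to\mathcal{M}_k^+$ that avoids the walls $\{u_j=0\}$, and then project fiberwise onto $\mathcal{N}_k^+$ by a block-wise Nehari scaling. The geometric input is a single profile function $v(z)$ valued in $H_0^1(\Omega)$; the equivariance is built in by hand in polar coordinates.

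Choose $m$ distinct radii $0<r_1<\cdots<r_m$ in the radial range of $\Omega$, and for each $j$ a nonnegative bump $\phi_j\in C_c^\infty(\Omega)$ supported in a narrow angular sector around the point with polar coordinates $(r_j,0)$ (in $n=3$, also localize near a fixed height). For $z=(z_1,\dots,z_m)\in\mathbb{S}^{2m-1}$ set
\[
	v(z) \;=\; \sum_{j=1}^{m}|z_j|\,w_j\!\left(\tfrac{z_j}{|z_j|}\right), \qquad w_j(\zeta)\;=\;\sum_{l=0}^{k-1}\mathscr{R}_{\arg(\zeta)/k\,+\,2\pi l/k}\phi_j,
\]
with the $j$-th summand interpreted as $0$ when $z_j=0$. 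The sum defining $w_j$ is insensitive to $\arg(\zeta)\mapsto\arg(\zeta)+2\pi$ (this merely cyclically permutes $l$), so $w_j\colon S^1\to H_0^1(\Omega)$ is well defined and continuous; the factor $|z_j|$ absorbs the ambiguity of $z_j/|z_j|$ at the origin, making $v$ continuous on $\mathbb{S}^{2m-1}$. A direct relabelling gives $\mathscr{R}_{2\pi/k}v(z)=v(z)$; replacing $\arg(\zeta)$ by $\arg(\zeta)+2\pi/p$ yields the key identity $v(e^{2\pi\mathrm{i}/p}z)=\mathscr{R}_{2\pi/(pk)}v(z)$; and since each $\phi_j\ge 0$ is nontrivial, $v(z)\neq 0$ for every $z\in\mathbb{S}^{2m-1}$. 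I would then assemble $\tilde\psi(z)\in\mathcal{M}_k^+$ by replicating the block pattern, $\tilde\psi(z)_j := \mathscr{R}_{2\pi((j-1)\bmod p)/(pk)}v(z)$; closing the cycle via $\mathscr{R}_{2\pi p/(pk)}v=\mathscr{R}_{2\pi/k}v=v$ immediately gives $\tilde\psi(e^{2\pi\mathrm{i}/p}z)=\sigma\tilde\psi(z)$.

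Next project onto $\mathcal{N}_k^+$ by assigning a single positive scaling $t_b>0$ to every component of the $b$-th block. Because within block $b$ the components are $\mathscr{R}_{2\pi/(pk)}$-rotations of a common generator (hence share self-norms and cross integrals) and the coupling matrix is block-permutation invariant by~\ref{itm:as_a}--\ref{itm:as_d}, the $p$ constraints $\partial_j E^+(tU)u_j=0$ within the block collapse to a single equation, leaving a $B\times B$ polynomial system for $(t_1^2,\dots,t_B^2)$ of the standard coupled-Nehari form treated in~\cite{tianMultipleSolitaryWave2011}. Repulsiveness together with~\ref{itm:as_d} guarantees a unique positive solution depending continuously on $U$, realized as the unique interior maximum of $E^+$ restricted to the $B$-dimensional positive cone of block scalings. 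Since $\sigma$ acts only within blocks, the Nehari system for $\sigma U$ is a mere relabelling of the one for $U$, so $T(\sigma U)=T(U)$ as $B$-tuples of scalars, and $\psi(z):=T(\tilde\psi(z))\cdot\tilde\psi(z)$ is a continuous map $\mathbb{S}^{2m-1}\to\mathcal{N}_k^+$ satisfying $\psi(e^{2\pi\mathrm{i}/p}z)=\sigma\psi(z)$.

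The main obstacle is this Nehari projection step, namely verifying that the block-reduced $B\times B$ system has a unique continuous positive solution. This hinges essentially on the repulsive hypothesis $\beta_{ij}\le 0$ and on~\ref{itm:as_d}, which together make the fiber restriction of $E^+$ strictly concave with a single interior maximum; I would import the corresponding argument of~\cite{tianMultipleSolitaryWave2011}. Once the projection is in hand, the rest---construction of $v(z)$ and the equivariance checks---is just polar-coordinate bookkeeping.
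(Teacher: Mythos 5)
Your first stage --- the equivariant map $\tilde\psi$ into $\mathcal{M}_k^+\setminus\{u_j=0\}$ built from the $\arg(z_j)/k$ rotation trick and a sum over the $k$ rotations $\mathscr{R}_{2\pi l/k}$ --- is essentially identical to the paper's construction of $\psi_0$. The gap is in your second stage, the projection onto $\mathcal{N}_k^+$. Because you replicate one and the same profile $v(z)$ across all $B$ blocks, the components of $\tilde\psi(z)$ lying in different blocks but with the same within-block index are \emph{identical} functions, so the cross-block integrals $\int_\Omega u_j^2u_{j'}^2=\int_\Omega v^4>0$ do not vanish; they enter the Nehari system multiplied by $\beta_{jj'}\le 0$, on whose magnitude assumptions \ref{itm:as_a}--\ref{itm:as_d} place no bound. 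Concretely, for $B=2$ the block constraints read $a=t_1^2\,\mu_1\!\int v^4+t_2^2\,\beta_{1,p+1}\!\int v^4$ and the symmetric equation, which force $t_1=t_2=t$ and then $a=t^2(\mu_1+\beta_{1,p+1})\int v^4$; if $\beta_{1,p+1}\le-\mu_1$ there is no positive solution and the fiber carries no Nehari point at all. Your assertions that repulsiveness together with \ref{itm:as_d} make the fiber restriction of $E^+$ strictly concave with a unique interior maximum are backwards: with $\beta_{ij}\le 0$ the coupling term $-\tfrac12\sum\beta_{ij}t_b^2t_{b'}^2\int u_i^2u_j^2$ contributes \emph{positively} and of fourth order, so $E^+$ can tend to $+\infty$ along the diagonal ray of the cone; and \ref{itm:as_d} is precisely the hypothesis that kills Nehari points when coupled components share support (this is the mechanism of Lemma~\ref{lm:no_fixed_point}). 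The reduction to a ``standard coupled-Nehari system'' from \cite{tianMultipleSolitaryWave2011} is therefore not available here.

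The paper sidesteps all of this by giving each pair $(i,j)$ ($i$ the complex coordinate, $j$ the block) its own bump supported in a separate radially symmetric annulus $\Omega_{i,j}$, the annuli being pairwise disjoint. Then \emph{every} product $u_j^2u_k^2$ vanishes identically, the $N$ Nehari constraints decouple completely, and the projection is the explicit componentwise scaling $U_j\mapsto\bigl(\norm{U_j}_{j}/\norm{U_j^+}^2_{L^4(\Omega)}\bigr)U_j$, which is continuous and commutes with $\sigma$ by inspection. Your argument is repaired by the same device: distribute the profiles of different blocks (and of different coordinates $i$) over disjoint radial shells rather than reusing a single $v(z)$.
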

\begin{proof}
	Let
	\[
		\Omega_\mathrm{sec} = \begin{cases}
			\{(r\cos\theta,r\sin\theta)\in \Omega\colon 0<\theta< \frac{2\pi}{pk}\} & \text{if $n=2$,}\\
			\{(r\cos\theta,r\sin\theta,z)\in \Omega\colon 0<\theta< \frac{2\pi}{pk}\} & \text{if $n=3$.}
		\end{cases}
	\]
	For each $i \in \{1,\dots, m\}$, $j\in\{1, \dots, B\}$, choose a radially symmetric domain $\Omega_{i,j}\subset\Omega$ and a function $\hat U_i^{(j)}\in C_0^\infty (\Omega_\mathrm{sec}\cap\Omega_{i,j})\setminus\{0\}$ such that all the $\Omega_{i,j}$-s are pairwise disjoint.

	Next we put
	\[
		U_i^{(j)} = \sum_{s = 1}^k \mathscr{R}_{2\pi s/k} \hat U_i^{(j)} \qquad \text{for $i=1,\dots, m$, $j=1,\dots,B$.}
	\]
	Thus, we have
	\begin{enumerate}[(U1)]
		\item\label{enu:u1} $U_i^{(j)}$ is $\mathscr{R}_{2\pi/k}$-invariant,
		\item\label{enu:u2} $\supp \mathscr{R}_{\theta_1} U_{i_1}^{(j_1)}\cap \supp \mathscr{R}_{\theta_2} U_{i_2}^{(j_2)} = \varnothing$ for all $\theta_1,\theta_2\in\mathbb{R}$ and $(i_1,j_1)\neq(i_2,j_2)$.
		\item\label{enu:u3} $\supp \mathscr{R}_{\frac{2\pi s}{pk}} U_{i}^{(j)} \cap \supp \mathscr{R}_{\frac{2\pi t}{pk}} U_{i}^{(j)} = \varnothing$ if $s\neq t$ and $s,t\in \{0,1,\dots,p-1\}$,
	\end{enumerate}

	Define $\psi_0\colon \mathbb{S}^{2m-1}\to (H_0^1(\Omega))^N$
	by
	\begin{align*}
		&{} \psi_0(z)
		= \psi_0(r_1\mathrm{e}^{\mathrm{i}\theta_1}, \dots, r_m\mathrm{e}^{\mathrm{i}\theta_m})\\
		=&{} \left(
			U^{(1)}, \mathscr{R}_{\frac{2\pi}{pk}}U^{(1)},\dots,\mathscr{R}_{\frac{2\pi(p-1)}{pk}}U^{(1)};\quad
			\dots;\quad
			U^{(B)}, \mathscr{R}_{\frac{2\pi}{pk}}U^{(B)},\dots,\mathscr{R}_{\frac{2\pi(p-1)}{pk}}U^{(B)}
		\right)
	\end{align*}
	where
	\[
		U^{(j)} = \sum_{i = 1}^m  r_i \mathscr{R}_{\theta_i/k} U_i^{(j)}, \qquad \text{for $j=1,\dots, B$.}
	\]
	Since $U_i^{(j)}$ are $\mathscr{R}_{2\pi/k}$-invariant, the functions $U^{(j)}$ are well-defined.
	Then we have
	\begin{align*}
		\psi_0(\mathrm{e}^{2\pi\mathrm{i}/p} z) &= \Phi(r_1\mathrm{e}^{\mathrm{i}(\theta_1+2\pi\mathrm{i}/p)}, \dots, r_m\mathrm{e}^{\mathrm{i}(\theta_m+2\pi\mathrm{i}/p)})\\
		&= \mathscr{R}_{2\pi/(pk)}\psi_0(z) \\
		&= \sigma\psi_0(z).
	\end{align*}

	From \ref{enu:u1}-\ref{enu:u3}, it follows that all components of $\psi_0(z)$ are $\mathscr{R}_{2\pi/k}$-invariant and their supports are pairwise disjoint,
	and hence that $\psi_0(\mathbb{S}^{2m-1})\subset \mathcal{M}_k^+\setminus\{0\}$.

	Next we construct a $C^1$ map $\Lambda \colon \psi_0(\mathbb{S}^{2m-1})\to\mathcal{N}_k^+$ such that $\Lambda\sigma = \sigma\Lambda$.
	For each $(U_1,\dots,U_N)\in\psi_0(\mathbb{S}^{2m-1})$,
	note that the supports of each component are pairwise disjoint, so by a direct computation we have
	\[
		\Lambda(U_1,\dots, U_N) := \left(\frac{\norm{U_1}_{1}}{\norm{U_1^+}^2_{L^4(\Omega)}}U_1, \dots, \frac{\norm{U_N}_{N}}{\norm{U_N^+}^2_{L^4(\Omega)}}U_N\right)\in \mathcal{N}_k^+.
	\]
	
	Since the diagram
	\[
		\begin{tikzcd}
			\mathbb{S}^{2m-1} \arrow[d, "e^{2\pi i/3}\cdot\id"] \arrow[r, "\psi_0"] & \psi_0(\mathbb{S}^{2m-1}) \arrow[r, "\Lambda"] \arrow[d, "\sigma"] & \mathcal{N}_k^+ \arrow[d, "\sigma"] \\
			\mathbb{S}^{2m-1} \arrow[r, "\psi_0"]                                   & \psi_0(\mathbb{S}^{2m-1}) \arrow[r, "\Lambda"]                     & \mathcal{N}_k^+
		\end{tikzcd}
	\]
	commutes, the proof is completed by setting $\psi = \Lambda\circ\psi_0$.
\end{proof}
Together with Lemma~\ref{lm:no_fixed_point} and \ref{it:index} of Proposition \ref{prop:properties_of_index}, this proposition ensures the existence of the set with an arbitrarily large index, and hence we have $c_j<\infty$ for each positive integer $j$. Now we can complete the proof of Theorem~\ref{th:main}.
\begin{proof}[Proof of Theorem~\ref{th:main}]
	Recall that every nontrivial critical point of $E^+$ is a positive classical solution of~\eqref{eq:main}. Combining Proposition~\ref{prop:properties_of_index} and Proposition~\ref{prop:index_of_critical_points} gives the existence of infinitely many positive solutions in $\mathcal{N}_k^+$ for each positive integer $k$. By Remark~\ref{rem:sym} and the definition of $\mathcal{N}_k^+$, we conclude that these solutions $\mathscr{R}_{2\pi/k}$-invariant but not $\mathscr{R}_{2\pi/(pk)}$-invariant, and the proof is completed.
\end{proof}
By taking $k = p^j$ for $j=0,1,2,...$, we obtain Corollary~\ref{coro:positive}.
\section{Multiple Non-radial Nodal Solutions}
\label{sec:multiple_nodal_solutions}
In this section, we set
\begin{gather*}
	E(u_1,\dots,u_N) = \sum_{i = 1}^{N} \left(\frac{1}{2}\norm{u_i}_i^2-\frac{\mu_i}{4} \int_{\Omega}(u_i)^4\right) - \frac{1}{2}\int_{\Omega}\sum_{i\neq j}^N \beta_{ij} u_i^2u_j^2,\\
	\begin{aligned}
		\mathcal{M}_k^{\mathrm{nod}} = \{U=(u_1,\dots, u_N)\in(H^1_0(\Omega))^N \colon
		&\text{$u_j = -\mathscr{R}_{2\pi/2k} u_j$}\\
		&\text{for $j=1,2,\dots,N$} \},
	\end{aligned}
\end{gather*}
and the Nehari-type manifold
\[
	\mathcal{N}_k^{\mathrm{nod}} = \{U=(u_1,\dots, u_N)\in \mathcal{M}_k^{\mathrm{nod}} \colon u_j\neq 0,\partial_j E(U)u_j=0,\forall j=1,2,\dots,N\},
\]
where
\[
	\partial_j E(U)u_j = \int_{\Omega} (\abs{\nabla u_j}^2 + \lambda_j u_j^2) - \mu_j\int_{\Omega}(u_j)^4 - \int_{\Omega}\sum_{\substack{k=1\\k\neq j}}^N \beta_{jk}u_j^2 u_k^2, \qquad \forall j=1,2,\dots,N.
\]
The functional $E$ is of $C^2$ and every nontrivial critical point of $E$ is a classical solution of~\eqref{eq:main}.

\begin{remark}
	According to the definition of $\mathcal{M}_k^{\mathrm{nod}}$, every component of $u\in \mathcal{M}_k^{\mathrm{nod}}$ is $\mathscr{R}_{2\pi/k}$-invariant, since $u_j = -\mathscr{R}_{2\pi/2k} u_j$ implies $u_j = \mathscr{R}_{2\pi/k} u_j$.
\end{remark}
Similarly to that in the previous section, since $\mathcal{M}_k^{\mathrm{nod}}$ is a fixed point space of an isometric representation of group $\mathbb{Z}_{2k} = \langle g \mid g^{2k} = \id \rangle$ defined as
\[
	g \circ (u_1,\dots,u_N) = -\mathscr{R}_{2\pi/{2k}}(u_1,\dots,u_{N}),
\]
we have the following lemma.
\begin{lemma}
	The subspace $\mathcal{M}_k^{\mathrm{nod}}$ and the submanifold $\mathcal{N}_k^{\mathrm{nod}}$ are natural constraints, i.e., every constrained critical point of $E$ on them is also a critical point of $E$.
\end{lemma}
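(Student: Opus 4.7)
The plan is to handle the two assertions in turn, mirroring the argument for the positive case in the previous section. For $\mathcal{M}_k^{\mathrm{nod}}$, I would invoke Palais's symmetric criticality principle. The $\mathbb{Z}_{2k}$-action $g \circ U = -\mathscr{R}_{2\pi/(2k)} U$ introduced just above the lemma is a linear isometry of $(H_0^1(\Omega))^N$, since rotation and the sign change each preserve the Hilbert norm. The functional $E$ is invariant under this action because every integrand appearing in $E$---namely $\abs{\nabla u_j}^2$, $u_j^2$, $u_j^4$ and $u_j^2 u_k^2$---is unchanged both by $u \mapsto -u$ and by a simultaneous rotation of the domain (using that $\Omega$ is radially symmetric). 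Since $\mathcal{M}_k^{\mathrm{nod}}$ is precisely the fixed-point set of this action, the principle gives the conclusion for $\mathcal{M}_k^{\mathrm{nod}}$ immediately.

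For $\mathcal{N}_k^{\mathrm{nod}}$, I would apply Lagrange multipliers inside the Hilbert subspace $\mathcal{M}_k^{\mathrm{nod}}$. Let $\Phi_j(U) := \partial_j E(U) u_j$. If $U \in \mathcal{N}_k^{\mathrm{nod}}$ is a critical point of $E|_{\mathcal{N}_k^{\mathrm{nod}}}$, there exist $\lambda_1, \ldots, \lambda_N \in \mathbb{R}$ with $\nabla E(U) = \sum_l \lambda_l \nabla \Phi_l(U)$ in $\mathcal{M}_k^{\mathrm{nod}}$. I would then test this identity against $V_j := (0,\dots,u_j,\dots,0)$, whose $j$-th slot is $u_j$ and other slots are zero; this vector lies in $\mathcal{M}_k^{\mathrm{nod}}$ because $u_j$ already satisfies the antisymmetry $u_j = -\mathscr{R}_{2\pi/(2k)} u_j$. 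Since $\partial_j E(U) u_j = \Phi_j(U) = 0$ on $\mathcal{N}_k^{\mathrm{nod}}$, this yields $\sum_l \lambda_l M_{jl} = 0$ for every $j$, where $M_{jl} := \partial_j \Phi_l(U) u_j$.

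A direct computation, using the Nehari identity $\norm{u_j}_j^2 = \mu_j \int_\Omega u_j^4 + \sum_{k \neq j} \beta_{jk} \int_\Omega u_j^2 u_k^2$, yields
\[
    M_{jj} = -2\mu_j \int_\Omega u_j^4, \qquad M_{jl} = -2\beta_{jl} \int_\Omega u_j^2 u_l^2 \quad \text{for } l \neq j.
\]
By assumption \ref{itm:as_a}-\ref{itm:as_d} one has $M_{jj}<0$ and $M_{jl}\geq 0$ for $l\neq j$, and the Nehari identity further implies
\[
    \abs{M_{jj}} = 2\norm{u_j}_j^2 + \sum_{l \neq j} \abs{M_{jl}} > \sum_{l \neq j} \abs{M_{jl}},
\]
since $u_j \neq 0$. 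Thus $M = (M_{jl})$ is strictly diagonally dominant, hence invertible, forcing all $\lambda_l = 0$; combined with the first step, $U$ is a critical point of $E$ on $(H_0^1(\Omega))^N$. The main subtlety is precisely this diagonal-dominance step: it relies on both the repulsive sign condition $\beta_{jl} \leq 0$ and the non-triviality of each $u_j$ built into the definition of $\mathcal{N}_k^{\mathrm{nod}}$. Everything else follows the template already laid out for the positive case, and the Palais-Smale condition and deformation lemma will transfer verbatim in the next steps.
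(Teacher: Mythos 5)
Your proposal is correct and follows essentially the same route as the paper: the subspace $\mathcal{M}_k^{\mathrm{nod}}$ is handled by Palais's symmetric criticality principle applied to the isometric $\mathbb{Z}_{2k}$-action $g\circ U=-\mathscr{R}_{2\pi/(2k)}U$, and the Nehari part is the standard Lagrange-multiplier argument (which the paper delegates to the cited references) with the matrix $M_{jj}=-2\mu_j\int_\Omega u_j^4$, $M_{jl}=-2\beta_{jl}\int_\Omega u_j^2u_l^2$ shown to be strictly diagonally dominant via the Nehari identity and $\beta_{jl}\le 0$. Your explicit computation is accurate, and the test vectors $V_j$ do lie in $\mathcal{M}_k^{\mathrm{nod}}$ since that subspace imposes no coupling between components.
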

To use the $\mathbb{Z}_p$ index theory, we need the counterparts of Proposition~\ref{prop:index1}-\ref{prop:spl_pos}. These counterparts can be derived using the same method, except for Proposition~\ref{prop:spl_pos}, where the situation is more delicate. We present the details separately as follows.
\begin{proposition}
	Denote by $\mathbb{S}^{2m-1}$ the unit sphere in $\mathbb{C}^m$. For any positive integer $m$, there exists a continuous map $\psi\colon \mathbb{S}^{2m-1} \to\mathcal{N}_k^{\mathrm{nod}}$, such that
	\[
		\psi( e^{2\pi i/p} z) = \sigma\psi(z).
	\]
\end{proposition}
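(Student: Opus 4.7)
The plan is to mirror the construction of Proposition~\ref{prop:spl_pos} using anti-periodic (rather than $\mathscr{R}_{2\pi/k}$-invariant) building blocks, produce a continuous $\sigma$-equivariant map $\psi_0\colon\mathbb{S}^{2m-1}\to\mathcal{M}_k^{\mathrm{nod}}\setminus\{0\}$ whose component supports are pairwise disjoint, and then compose with a diagonal Nehari projection. I would first fix pairwise disjoint radial subdomains $\Omega_{i,b}\subset\Omega$ and a sector $\Omega_{\mathrm{sec}}$ of angular width strictly less than $\pi/(pk)$, choose $\hat U_i^{(b)}\in C_0^\infty(\Omega_{\mathrm{sec}}\cap\Omega_{i,b})\setminus\{0\}$, and form
\[
U_i^{(b)}=\sum_{s=0}^{2k-1}(-1)^s\mathscr{R}_{\pi s/k}\hat U_i^{(b)},
\]
which is $\mathscr{R}_{\pi/k}$-anti-periodic (hence lies in the nodal subspace) and whose $2k$ rotated bumps have disjoint supports.

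For $p\geq 3$ I would then set $U^{(b)}(z)=\sum_i r_i\,\mathscr{R}_{\theta_i/k}U_i^{(b)}$ and
\[
\psi_0(z)=\bigl(U^{(1)}(z),U^{(1)}(\phi z),\ldots,U^{(1)}(\phi^{p-1}z);\ \ldots;\ U^{(B)}(z),\ldots,U^{(B)}(\phi^{p-1}z)\bigr),\quad\phi z=e^{2\pi i/p}z,
\]
exactly as in the positive case. Continuity is guaranteed by the $\mathscr{R}_{2\pi/k}$-invariance of $U_i^{(b)}$ (a consequence of the anti-periodicity applied twice), the identity $\psi_0(\phi z)=\sigma\psi_0(z)$ follows from $\phi^p=\id$, each component is anti-periodic, and the slot shift $2\pi/(pk)$ is strictly smaller than the anti-periodicity spacing $\pi/k$, so the positional analysis from the positive proof still yields pairwise disjoint supports within each block.

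The hard part will be the case $p=2$. Here the slot shift $2\pi/(pk)=\pi/k$ coincides with the anti-periodicity shift, so $\mathscr{R}_{\pi/k}U^{(b)}(z)=-U^{(b)}(z)$ and the two slots within a block collapse onto the same support with opposite signs. Under assumption~\ref{itm:as_d} (which forces $\mu_j+\beta_{ij}\leq 0$ within a block) the resulting Nehari system is singular and no positive rescaling exists. I would break the collapse by introducing auxiliary anti-periodic blocks $W_i^{(b)}$ in further disjoint radial subdomains and taking the half-angle variant
\[
U^{(b)}(z)=\sum_{i=1}^m r_i^{1/2}\Bigl[\cos(\theta_i/2)\,\mathscr{R}_{\theta_i/(2k)}U_i^{(b)}+\sin(\theta_i/2)\,\mathscr{R}_{\theta_i/(2k)}W_i^{(b)}\Bigr].
\]
Although neither $\cos(\theta_i/2)$ nor $\mathscr{R}_{\theta_i/(2k)}$ descends continuously to $z_i\in\mathbb{C}$ on its own, their product against an anti-periodic building block does, because the sign flip of $\cos(\theta_i/2)$ (resp.\ $\sin(\theta_i/2)$) under $\theta_i\mapsto\theta_i+2\pi$ cancels exactly the sign flip $\mathscr{R}_{\pi/k}U_i^{(b)}=-U_i^{(b)}$ (resp.\ for $W_i^{(b)}$); the resulting $U^{(b)}(z)$ does not vanish on $\mathbb{S}^{2m-1}$ since $\cos^2(\theta_i/2)+\sin^2(\theta_i/2)=1$ and the $U$- and $W$-supports are radially separated. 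With this replacement the $\phi$-action rotates by $\pi/(2k)$ while trigonometrically swapping the $U$- and $W$-summands, so choosing the sector width below $\pi/(2k)=\pi/(pk)$ restores disjoint slot supports.

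In either case the components of $\psi_0(z)$ have pairwise disjoint supports, so $\int U_j^2U_k^2=0$ for $j\neq k$ and the diagonal Nehari projection
\[
\Lambda(U_1,\ldots,U_N)=\Bigl(\frac{\norm{U_1}_1}{\sqrt{\mu_1}\,\norm{U_1}_{L^4}^2}\,U_1,\ \ldots,\ \frac{\norm{U_N}_N}{\sqrt{\mu_N}\,\norm{U_N}_{L^4}^2}\,U_N\Bigr)
\]
lands in $\mathcal{N}_k^{\mathrm{nod}}$; continuity and $\sigma$-equivariance of $\psi:=\Lambda\circ\psi_0$ then follow from the same commutative diagram used in the positive case, finishing the construction.
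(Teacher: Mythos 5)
Your construction is correct, but the route you take to the key family of building blocks is genuinely different from the paper's. The paper does not rotate anything in the nodal case: for each $(i,j)$ it fixes $2p$ anti-periodic bumps $\varphi_1^{i,j},\dots,\varphi_{2p}^{i,j}$ with pairwise disjoint supports and interpolates between them with a cyclic family of cutoff functions $\eta_\ell(\theta)$ on $\mathbb{R}/(2\pi\mathbb{Z})$, setting $U_\theta^{i,j}=\sum_\ell\eta_\ell(\theta)\varphi_\ell^{i,j}$; the shift $\theta\mapsto\theta+2\pi s/p$ then moves the support two bumps along the cycle, so the disjointness (U3$'$) holds for every prime $p$ with no case analysis, and the slots are $U^{(j)}_\ell=\sum_i r_iU^{i,j}_{\theta_i+2\pi\ell/p}$. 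You instead transplant the genuine-rotation construction of Proposition~\ref{prop:spl_pos}, which is exactly where the paper's remark that the nodal case is ``more delicate'' bites: the rotation $\mathscr{R}_{2\pi/(pk)}$ interacts with the anti-period $\pi/k$, and for $p=2$ the two slots collapse to $\pm$ the same function, which assumption~\ref{itm:as_d} then rules out of the Nehari set. You correctly diagnose this and your half-angle repair is sound --- the sign flips of $\cos(\theta_i/2)$, $\sin(\theta_i/2)$ and of $\mathscr{R}_{\pi/k}$ on anti-periodic functions cancel, so the map descends to $\mathbb{C}$, and the $\pi/(2k)$ offset between the $U$- and $W$-slots keeps supports disjoint. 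For $p\geq 3$ your disjointness claim also holds, though the reason is that $p$ is an odd prime, so $2(s-t)/p$ is never an integer and every shift $2\pi(s-t)/(pk)$ stays at distance at least $\pi/(pk)$ from the lattice $(\pi/k)\mathbb{Z}$; your stated justification (``slot shift smaller than the spacing'') only directly covers adjacent slots, so this modular computation should be made explicit. On balance the paper's travelling-bump construction buys uniformity in $p$ and avoids the $p=2$ pathology altogether, while yours stays closer to the positive case and keeps the rotating-bump geometry visible at the price of a separate, more delicate argument for $p=2$.
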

\begin{proof}
	For each $i \in \{1,\dots, m\}$, $j\in\{1, \dots, B\}$ and $\theta\in \mathbb{R}/(2\pi\mathbb{Z})$, one can find $U^{i,j}_\theta\in C_0^\infty(\Omega)\setminus\{0\}$ satisfying the following conditions.
	\begin{enumerate}[(U1$'$)]
		\item\label{enu:u1_prime} The map $\mathbb{R}/(2\pi\mathbb{Z})\to C_0^\infty(\Omega)\colon \theta\mapsto U_\theta^{i,j}$ is continuous and $U_\theta^{i,j} = -\mathscr{R}_{2\pi/2k} U_\theta^{i,j}$.
		\item\label{enu:u2_prime} $\supp U^{i_1,j_1}_{\theta_1}\cap \supp U^{i_2,j_2}_{\theta_2} = \varnothing$ for all $\theta_1,\theta_2\in\mathbb{R}/(2\pi\mathbb{Z})$ and $(i_1,j_1)\neq(i_2,j_2)$.
		\item\label{enu:u3_prime} $\supp U^{i,j}_{\theta} \cap \supp U^{i,j}_{\theta+\frac{2\pi s}{p}} = \varnothing$ if $\theta\in\mathbb{R}/2\pi \mathbb{Z}$ and $s \in \{1,\dots,p-1\}$.
	\end{enumerate}
	To be more precise, for each $i \in \{1,\dots, m\}$, $j\in\{1, \dots, B\}$ and $\ell\in\{1,2,\dots,2p\}$, find a nonzero function $\varphi_\ell^{i,j}\in C_0^\infty(\Omega)$ such that $\varphi_\ell^{i,j} = -\mathscr{R}_{2\pi/2k} \varphi_\ell^{i,j}$ and all the supports of $\varphi_\ell^{i,j}$ are pairwise disjoint.
	Then choose $2p$ functions $\eta_1,\dots,\eta_{2p}\in C_0^\infty(\mathbb{R}/(2\pi\mathbb{Z}))$ such that for each $\ell\in\{1,2,\dots,2p\}$
	\[
		\eta_\ell>0 \text{~in} \left(\frac{(\ell-1)\pi}{p}, \frac{(\ell+1)\pi}{p}\right)+2\pi\mathbb{Z} \subset \mathbb{R}/(2\pi\mathbb{Z}),
	\]
	and vanishes outside.
	Then we set
	\[
		U_\theta^{i,j} = \sum_{\ell=1}^{2p} \eta_\ell(\theta)\varphi_\ell^{i,j}.
	\]
	Thus, $U_\theta^{i,j}$ satisfies \ref{enu:u1_prime}-\ref{enu:u3_prime}. The validity of \ref{enu:u1_prime} and \ref{enu:u2_prime} are obvious, while \ref{enu:u3_prime} follows from our choice of $\supp\eta_\ell$.
	In fact, when $\theta\in (\frac{t\pi}{p}, \frac{(t+1)\pi}{p})$ for some $t\in\{0,1,\dots,2p-1\}$, $\eta_\ell\neq 0$ if and only if $\ell \equiv t$ or $t+1 \pmod{2p}$. With setting $\varphi^{i,j}_{\ell+2p} = \varphi^{i,j}_{\ell}$ for $\ell=1,2,\dots,2p$, we conclude that $U_\theta^{i,j}\neq 0$ and that
	\begin{equation*}
		\supp U^{i,j}_{\theta} \subset \left(\supp\varphi^{i,j}_t \cup \supp\varphi^{i,j}_{t+1}\right).
	\end{equation*}
	Similarly, for $s\in\{1,2,\dots,p-1\}$, we conclude that
	\[
		\supp U^{i,j}_{\theta+\frac{2\pi s}{p}} \subset \left(\supp\varphi^{i,j}_{t+2s} \cup \supp\varphi^{i,j}_{t+2s+1}\right),
	\]
	and hence that
	\[
		\supp U^{i,j}_{\theta} \cap \supp U^{i,j}_{\theta+\frac{2\pi s}{p}} = \varnothing.
	\]
	The case of $\theta = \frac{t\pi}{p}$ for some $t$ is similar, and we omit it.

	Hence, we can define $\psi_0\colon \mathbb{S}^{2m-1}\to (H_0^1(\Omega))^N$
	by
	\begin{align*}
		&{} \psi_0(z)
		= \psi_0(r_1\mathrm{e}^{\mathrm{i}\theta_1}, \dots, r_m\mathrm{e}^{\mathrm{i}\theta_m})\\
		=&{} \left(
			U^{(1)}_1, \dots, U^{(1)}_p;\quad
			\dots;\quad
			U^{(B)}_1, \dots, U^{(B)}_p
		\right)
	\end{align*}
	where
	\[
		U^{(j)}_\ell = \sum_{i = 1}^{m} r_i U_{\theta_i + \frac{2\pi \ell}{p}}^{i,j} , \qquad \text{for $\ell=1,\dots,p$, $j=1,\dots, B$.}
	\]
	We thus have
	\begin{align*}
		\psi_0(\mathrm{e}^{2\pi\mathrm{i}/p} z) &= \Phi(r_1\mathrm{e}^{\mathrm{i}(\theta_1+2\pi\mathrm{i}/p)}, \dots, r_m\mathrm{e}^{\mathrm{i}(\theta_m+2\pi\mathrm{i}/p)})\\
		&= \sigma\psi_0(z).
	\end{align*}

	From \ref{enu:u1_prime}-\ref{enu:u3_prime}, it follows that $\psi_0(z)\in\mathcal{M}_k^{\mathrm{nod}}$, and that the supports of all components of $\psi_0(z)$ are pairwise disjoint.
	Since $r_i$-s are not all zero for $(r_1\mathrm{e}^{\mathrm{i}\theta_1}, \dots, r_m\mathrm{e}^{\mathrm{i}\theta_m}) \in \mathbb{S}^{2m-1}$, each component of $\psi_0(z)$ is nonzero by the definition. These facts allow one to construct a $C^1$ map $\Lambda \colon \psi_0(\mathbb{S}^{2m-1})\to\mathcal{N}_k$ such that $\Lambda\sigma = \sigma\Lambda$.
	Then the rest part is similar to that in Proposition~\ref{prop:spl_pos}.
\end{proof}

Now we can finish the proof of Theorem~\ref{th:main_nod}.
\begin{proof}[Proof of Theorem~\ref{th:main_nod}]
	The existence part of the theorem can be obtained using the same method. By the definition of $\mathcal{M}_k^{\mathrm{nod}}$, $u\in\mathcal{M}_k^{\mathrm{nod}}$ cannot be $\mathscr{R}_{2\pi/(2k)}$-invariant provided $u\neq 0$, and this gives the last part of the theorem.
\end{proof}
By taking $k = 2^j$ for $j=0,1,2,...$, we obtain Corollary~\ref{coro:nod}.
\section{Minimal Period}
\label{sec:min_period}
In this section, we will restrict ourselves to $2$-coupled system~\eqref{eq:2coupled_l}.
Without loss of generality, we assume $\lambda = 1$. Then the system becomes
\begin{equation}\label{eq:2coupled}
	\begin{cases}
		-\Delta u + u = u^3 + \beta uv^2\\
		-\Delta v + v = v^3 + \beta vu^2.
	\end{cases}
\end{equation}
We follow the notations used in Section~\ref{sec:multiple_positive_solutions}.
We will show that, for $\beta < 0$, each component of the least energy solution of~\eqref{eq:2coupled} on $\mathcal{N}_k$ has a minimal period of $2\pi/k$.
\subsection{The pull-back on $H^1_0(\Omega)$ of the Functional}
Our proof starts with the following observation.
\begin{lemma}
	The subspace $\mathcal{M}_k^+$ is toplinear isomorphic to $H_0^1(\Omega)$ in the sense that there is a continuous bijective linear map from $H_0^1(\Omega)$ onto $\mathcal{M}_k^+$.
\end{lemma}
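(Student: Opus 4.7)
My plan is to decompose the required isomorphism into two explicit pieces. Since Section~\ref{sec:min_period} has specialized to the $2$-coupled case (so $N = p = 2$, $B = 1$), every element $(u_1, u_2) \in \mathcal{M}_k^+$ is determined by $u_1$ via $u_2 = \mathscr{R}_{\pi/k} u_1$, with $u_1$ constrained to lie in the $\mathscr{R}_{2\pi/k}$-invariant subspace
\[
	H^1_{0,k}(\Omega) := \{u \in H^1_0(\Omega) : \mathscr{R}_{2\pi/k} u = u\}.
\]
The duplication map $u \mapsto (u, \mathscr{R}_{\pi/k} u)$ is an obvious toplinear isomorphism $H^1_{0,k}(\Omega) \to \mathcal{M}_k^+$: it is linear and bounded (of norm $\sqrt{2}$ since $\mathscr{R}_{\pi/k}$ is an isometry), injective (the first coordinate alone is), and surjective by the very definition of $\mathcal{M}_k^+$, with continuous inverse the first-coordinate projection. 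Hence it suffices to produce a toplinear isomorphism $\Psi \colon H^1_0(\Omega) \to H^1_{0,k}(\Omega)$.

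I would construct $\Psi$ by an angular rescaling. In polar coordinates $(r, \theta)$ when $n=2$, or cylindrical coordinates $(r, \theta, z)$ when $n=3$, define
\[
	(\Psi u)(r, \theta) := u(r, k\theta \bmod 2\pi).
\]
By construction $\Psi u$ has angular period $2\pi/k$, and the Dirichlet condition $\Psi u = 0$ on $\partial \Omega$ is automatic since only the angular variable is altered. A direct change-of-variable argument using the standard polar-coordinate expression for the $H^1$-norm together with the $2\pi$-periodicity of $u$ in $\theta$ (via the substitution $\phi = k\theta \bmod 2\pi$, which maps $[0, 2\pi)$ onto itself $k$-to-one) yields
\[
	\|\Psi u\|_{H^1}^2 = \|u\|_{L^2}^2 + \|\partial_r u\|_{L^2}^2 + k^2 \|r^{-1} \partial_\theta u\|_{L^2}^2 \leq k^2 \|u\|_{H^1}^2,
\]
so $\Psi$ is bounded and linear. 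Its inverse sends $v \in H^1_{0,k}(\Omega)$ to the function $(r, \phi) \mapsto v(r, \phi/k)$ for $\phi \in [0, 2\pi)$; this is well-defined precisely by the $2\pi/k$-periodicity of $v$, and an analogous computation gives $\|\Psi^{-1} v\|_{H^1}^2 \leq \|v\|_{H^1}^2$. Composing $\Psi$ with the duplication map of the first step then delivers the required continuous linear bijection $H^1_0(\Omega) \to \mathcal{M}_k^+$.

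The main point to be careful about is that the formal rescaling in polar (respectively cylindrical) coordinates actually produces a bona fide $H^1$ function on $\Omega$ in the Cartesian sense, and not merely a function with square-integrable polar derivatives. This matters when $\Omega$ is a ball, where the coordinate system degenerates at the origin, and in the $3$-dimensional setting along the rotation axis. The displayed norm identity settles it: the Jacobian weight $r$ absorbs the $r^{-2}$ appearing in front of $|\partial_\theta u|^2$, so the standard equivalence between the polar and Cartesian $H^1$-norms on $\Omega$ produces a genuine weak Cartesian derivative; on an annulus the point is trivial because the polar (cylindrical) change of variables is a smooth diffeomorphism there. Since every other property---linearity, surjectivity, bicontinuity---is immediate from the explicit formulas, the lemma follows.
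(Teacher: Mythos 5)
Your argument is correct and is essentially the paper's own proof with the omitted details filled in: composing your angular rescaling with the duplication map $u\mapsto(u,\mathscr{R}_{\pi/k}u)$ reproduces exactly the paper's $\Psi_k u=(u(r,k\theta),u(r,k\theta+\pi))$ (the paper's ``$+\pi/k$'' in the second slot is evidently a typo for ``$+\pi$''), and your norm identity plus the remark about the coordinate degeneracy at the origin/axis are precisely the ``easy to check'' steps the paper leaves to the reader. No further changes are needed.
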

\begin{proof}
	Using polar coordinate system, we define $\Psi_k\colon H_0^1(\Omega) \to \mathcal{M}_k^+$ by
	\begin{align*}
		(\Psi_k u)(r,\theta) = (u(r,k\theta), u(r,k\theta + \pi/k)).
	\end{align*}
	It is easy to check that the map $\Psi_k$ is well-defined and has the desired properties.
\end{proof}

From now on, $\Psi_k$ denotes the map given above. Let $\hat E_k^+$, the pullback of $E^+$ by $\Psi_k$, be defined by $\hat E_k = E\circ\Psi_k$. A direct computation gives
\begin{align*}
	\hat E_k^+(u) =& \int_0^\infty\int_0^{2\pi} \left(u_r^2+\frac{k^2}{r^2}u_{\theta}^2\right)r \,\dd\theta\,\dd r
	+ \int_{\Omega} u^2 - \frac{1}{2}\int_{\Omega}(\abs{u^+}^4 + \beta u^2(x) u^2(-x))\,\dd x, 
\end{align*}
Besides, it is easily seen that, for every critical point $U$ of $E^+$, the function $\hat{u}:=\Psi_k^{-1}(U)$ is a critical point of $\hat E_k$, and that $\hat{u}$ satisfies
\begin{equation}\label{eq:reduced_eq}
	-\Delta_k \hat{u}(x) + \hat{u}(x) = \hat{u}^3(x) + \beta \hat{u}(x)\hat{u}^2(-x),
\end{equation}
where
\begin{equation}\label{eq:laplacian_k}
	\Delta_k u(r,\theta) := \frac{1}{r}\frac{\partial}{\partial r}\left(r\frac{\partial u}{\partial r}\right) + \frac{k^2}{r^2}\frac{\partial^2 u}{\partial\theta^2}, \qquad \forall u\in C^2(\Omega),
\end{equation}
writing in polar coordinates.

Let $\hat{\mathcal{N}}_k^+ = \Psi_k^{-1}(\mathcal{N}_k^+)$ be the Nehari-type manifold associated with $\hat{\Psi}_k$. We have the following lemma.
\begin{lemma}\label{lm:nehari_rds}
	Assume $\beta<0$. Let $u\in H^1_0(\Omega)$ satisfy
	\begin{equation}\label{eq:asp_nehari_rds}
		\int_{\Omega} \abs{u^+}^4 + \beta \int_{\Omega} u^2(x) u^2(-x)\,\dd x >0.
	\end{equation}
	Then there is a unique $\lambda>0$ such that $\lambda u\in\hat{\mathcal{N}}_k^+$ and
	\[
		\hat{E}_k^+(\lambda u) = \sup_{t > 0} \hat{E}_k^+(t u).
	\]
\end{lemma}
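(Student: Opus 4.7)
The plan is to reduce the assertion to a one-variable calculus problem for the fibering map $\varphi(t) := \hat E_k^+(tu)$, $t \geq 0$. Using the explicit formula for $\hat E_k^+$, the quadratic part scales as $t^2$ and the quartic part as $t^4$, so
\begin{equation*}
	\varphi(t) = t^2 A(u) - \tfrac{1}{2} t^4 B(u),
\end{equation*}
where
\begin{equation*}
	A(u) := \int_0^\infty\!\!\int_0^{2\pi}\left(u_r^2 + \tfrac{k^2}{r^2} u_\theta^2\right) r\,\dd\theta\,\dd r + \int_\Omega u^2,\qquad B(u) := \int_\Omega \abs{u^+}^4 + \beta \int_\Omega u^2(x)\,u^2(-x)\,\dd x.
\end{equation*}
The hypothesis gives $B(u) > 0$, which in particular forces $u \neq 0$ and hence $A(u) > 0$.

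Next I would perform elementary calculus on the polynomial $\varphi$: the derivative $\varphi'(t) = 2t\bigl(A(u) - t^2 B(u)\bigr)$ has a unique positive zero $\lambda := \sqrt{A(u)/B(u)}$, and the sign of $\varphi'$ combined with $\varphi(0)=0$ and $\varphi(t)\to -\infty$ as $t\to\infty$ identifies $\lambda$ as the unique global maximizer of $\varphi$ on $(0,\infty)$. This already yields the uniqueness of $\lambda$ as a positive critical point of $\varphi$ together with the supremum identity $\hat E_k^+(\lambda u) = \sup_{t>0} \hat E_k^+(tu)$.

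Finally, I would identify $\varphi'(\lambda) = 0$ with the Nehari condition $\lambda u \in \hat{\mathcal{N}}_k^+$. Writing $U := \Psi_k u = (u_1, u_2) \in \mathcal{M}_k^+$ so that $u_2 = \mathscr{R}_{\pi/k} u_1$, the chain rule gives $\varphi'(t) = \partial_1 E^+(tU)\,u_1 + \partial_2 E^+(tU)\,u_2$. Because $\mathscr{R}_{\pi/k}$ is an isometry of $\Omega$ and the coefficients in $E^+$ are symmetric in the two components for $N=2$, a direct substitution gives $\partial_1 E^+(tU)\,u_1 = \partial_2 E^+(tU)\,u_2$. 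Hence $\varphi'(\lambda) = 0$ is equivalent to both Nehari equations $\partial_j E^+(\lambda U)(\lambda u_j) = 0$, $j=1,2$, holding simultaneously, which together with $u_1, u_2 \neq 0$ (from $u \neq 0$, using that $\Psi_k$ is a linear isomorphism) yields $\lambda u \in \hat{\mathcal{N}}_k^+$. The main subtle point is precisely this last splitting of a single scalar equation into the two Nehari equations, which relies on the $\mathbb{Z}_2$-symmetry of $E^+$ already exploited in Section~\ref{sec:multiple_positive_solutions}; every other step is elementary one-variable calculus.
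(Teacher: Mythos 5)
Your proof is correct and follows essentially the same route as the paper: both reduce to the explicit fibering polynomial $\varphi(t)=t^2A(u)-\tfrac12 t^4B(u)$ and identify $\lambda=\sqrt{A(u)/B(u)}$ as its unique positive critical point and global maximizer, exactly the formula the paper records. Your extra step checking that the single scalar equation $\varphi'(\lambda)=0$ splits into both component Nehari equations via the symmetry $u_2=\mathscr{R}_{\pi/k}u_1$ is a detail the paper leaves implicit, and it is verified correctly.
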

\begin{proof}
	Inequality~\eqref{eq:asp_nehari_rds} implies
	\[
		\lim_{t\to\infty} \hat{E}_k^+(t u) = -\infty.
	\]
	On the other hand, by Sobolev inequality and Cauchy-Schwarz inequality,
	\begin{align*}
		\frac{1}{2}\int_{\Omega}(\abs{u^+}^4 + \beta u^2(x) u^2(-x))\,\dd x \leq C\left(\int_{\Omega}\abs{\nabla u}^2\,\dd x\right)^2.
	\end{align*}
	This implies $0$ is an isolated local minimum of $\hat E$. Hence, the function $t\mapsto \hat E(t u)$ achieves its maximum at some $\lambda > 0$. An easy computation shows that
	\[
		\lambda = \sqrt{\frac{\int_0^1\int_0^{2\pi} \left(u_r^2+\frac{k^2}{r^2}u_{\theta}^2\right)r \,\dd\theta\,\dd r
		+ \int_{\Omega} u^2}{\int_{\Omega} \abs{u^+}^4 + \beta \int_{\Omega} u^2(x) u^2(-x)\,\dd x}},
	\]
	which is the unique critical point of $t\mapsto \hat E(t u)$ on $(0,+\infty)$, and thus $\lambda u \in \hat{\mathcal{N}}_k$.
\end{proof}
\subsection{The Symmetry of the Minimizer}
We begin with the existence of the minimizer of $\hat{E}_k^+$ on $\hat{\mathcal{N}}_k^+$.
\begin{proposition}\label{prop:minimizer}
    Assume $\beta<0$. For each positive integer $k$, there exists $(u,v)\in \mathcal{N}_k^+$ such that $E^+(u,v) = \inf_{\mathcal{N}_k} E^+$, i.e.,
	\[
		\hat{E}_k^+(\Psi^{-1}(u,v)) = \inf_{\hat{\mathcal{N}}_k^+} \hat{E}_k^+.
	\]
\end{proposition}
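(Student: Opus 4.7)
The plan is to apply the direct method of the calculus of variations on the pullback side: work with $\hat E_k^+$ on $\hat{\mathcal{N}}_k^+\subset H^1_0(\Omega)$, extract a weak limit of a minimizing sequence from the boundedness supplied by the Nehari identity, upgrade the two quartic integrals to strong limits via the compact Sobolev embedding $H^1_0(\Omega)\hookrightarrow L^4(\Omega)$ (available since $\Omega$ is a bounded planar or three-dimensional domain), and project it back to $\hat{\mathcal{N}}_k^+$ via Lemma~\ref{lm:nehari_rds} if necessary. Applying $\Psi_k$ then produces the pair sought in $\mathcal{N}_k^+$.

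First I would observe that, under the pullback, the two Nehari constraints for $\Psi_k w$ reduce to a single identity because the second component is a rotate of the first. Writing
\[
    Q_k(w) := \int_0^1\!\!\int_0^{2\pi}\!\Bigl(w_r^2+\tfrac{k^2}{r^2}w_\theta^2\Bigr) r \,\dd\theta\,\dd r + \int_\Omega w^2,
\]
the Nehari identity becomes $Q_k(w)=\int_\Omega(w^+)^4+\beta\int_\Omega w^2(x)w^2(-x)\,\dd x$, and a direct substitution into $\hat E_k^+$ gives $\hat E_k^+(w)=\tfrac{1}{2}Q_k(w)$ on $\hat{\mathcal{N}}_k^+$. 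Since $Q_k$ dominates the standard $H^1_0$-norm squared, any minimizing sequence is bounded in $H^1_0(\Omega)$. Combining the Nehari identity with $\beta<0$ and the Sobolev embedding yields $Q_k(w)\le\int_\Omega(w^+)^4\le C\,Q_k(w)^2$ for every $w\in\hat{\mathcal{N}}_k^+$, hence a uniform positive lower bound on $Q_k$ and on $c_k:=\inf_{\hat{\mathcal{N}}_k^+}\hat E_k^+$.

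Next I would pick a minimizing sequence $\{w_n\}\subset\hat{\mathcal{N}}_k^+$, pass to a subsequence converging weakly to some $w\in H^1_0(\Omega)$ and strongly in $L^4(\Omega)$ by Rellich--Kondrachov. Because $x\mapsto -x$ is an isometry of $L^4(\Omega)$, the strong $L^4$ convergence yields $\int_\Omega(w_n^+)^4\to\int_\Omega(w^+)^4$ and $\int_\Omega w_n^2(x)w_n^2(-x)\,\dd x\to \int_\Omega w^2(x)w^2(-x)\,\dd x$. Letting $n\to\infty$ in the Nehari identity along the sequence gives
\[
    \int_\Omega(w^+)^4+\beta\int_\Omega w^2(x)w^2(-x)\,\dd x = \lim_n Q_k(w_n) = 2c_k > 0,
\]
so $w\not\equiv 0$ and hypothesis~\eqref{eq:asp_nehari_rds} of Lemma~\ref{lm:nehari_rds} holds; moreover, weak lower semicontinuity of the convex functional $Q_k$ yields $Q_k(w)\le 2c_k$.

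Finally, Lemma~\ref{lm:nehari_rds} produces a unique $\lambda>0$ with $\lambda w\in \hat{\mathcal{N}}_k^+$, and the explicit expression $\lambda^2 = Q_k(w)/\bigl(\int_\Omega(w^+)^4+\beta\int_\Omega w^2(x)w^2(-x)\,\dd x\bigr) \le 1$ together with $\hat E_k^+(\lambda w)=\tfrac{\lambda^2}{2}Q_k(w)$ gives $\hat E_k^+(\lambda w)\le\tfrac{1}{2}Q_k(w)\le c_k$, forcing equality and producing the minimizer. Setting $(u,v):=\Psi_k(\lambda w)\in\mathcal{N}_k^+$ yields the pair asserted by the proposition. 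The main obstacle is the potential vanishing of the weak limit; this is dispatched precisely by the combination of $c_k>0$ and the $L^4$-compactness that turns both quartic nonlinearities into weakly continuous functionals, so that the full Nehari identity survives the passage to the weak limit and pins $w$ uniformly away from zero.
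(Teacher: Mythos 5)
Your argument is correct and is essentially the standard Nehari-manifold minimization that the paper has in mind: the authors omit the proof entirely, deferring to Tavares--Weth, and the direct method you carry out (boundedness from $\hat E_k^+=\tfrac12 Q_k$ on the constraint, a uniform lower bound on $Q_k$ via Sobolev and $\beta<0$, compactness of $H^1_0(\Omega)\hookrightarrow L^4(\Omega)$ to pass the quartic terms to the limit and rule out vanishing, then projection back via Lemma~\ref{lm:nehari_rds} with $\lambda\le 1$) is exactly the intended route. You have in effect supplied the details the paper leaves out, and no step fails.
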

\begin{proof}
	The proof is similar to that of \cite{tavaresExistenceSymmetryResults2013}*{Theorem 1.1} and will be omitted.
\end{proof}
Recall that a function $u\colon \Omega\to\mathbb{R}$ is said to be \emph{foliated Schwarz symmetry} with respect to $e\in\partial B_1(0)$ if for a.e.\ $r>0$ such that $\partial B_r(0)\subset\Omega$ and for every $c\in\mathbb{R}$ the set $\{x\in\partial B_r(0)\colon u(x)\geq c\}$ is either equal to $\partial B_r(0)$ or to a geodesic ball in $\partial B_r(0)$ centered at $rp$. In other words, $u$ is foliated Schwarz symmetry with respect to $e\in\partial B_1(0)$ if $u(x)$ depends only on $(r,\theta) = (\abs{x}, \arccos(x\cdot e)/\abs{x})$ and is non-increasing in $\theta$.

We will show the minimizer of $\hat{E}_k^+$ on $\hat{\mathcal{N}}_k^+$ is foliated Schwarz symmetric.
For this, we introduce the notions about polarization, following Tavares and Weth \cite{tavaresExistenceSymmetryResults2013}. The classical works here are \cites{vanschaftingenExplicitApproximationSymmetric2009,wethSymmetrySolutionsVariational2010}.
Define
\[
	\mathcal{H}_0 = \{H\subset\mathbb{R}^n\colon \text{~$H$ is a closed half-space in $\mathbb{R}^n$ and $0\in\partial H$}\}.
\]
For each $H\in\mathcal{H}_0$, we use $\sigma_H$ to denote the reflection with respect to $\partial H$, and for $u\in H^1(\Omega)$ we define $u_H$ by
\[
	u_H(x) = \begin{cases}
		\max\{u(x), u(\sigma_H(x))\} & x\in \Omega\cap H,\\
		\min\{u(x), u(\sigma_H(x))\} & x\in \Omega\setminus H.
	\end{cases}
\]
We can now state our main tool, which is an analogue of \cite{tavaresExistenceSymmetryResults2013}*{Theorem 4.3}.
\begin{lemma}\label{lm:fss}
	Assume $\beta<0$. Let $\hat u \in C^2(\Omega)\cap C^1(\bar\Omega)$ be a classical solution of Equation~\eqref{eq:reduced_eq}. If $\hat{u}_H$ is still a classical solution of~\eqref{eq:reduced_eq} for every $H\in \mathcal{H}_0$, then $u$ is foliated Schwarz symmetric.
\end{lemma}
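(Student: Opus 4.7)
The plan is to reduce the conclusion to the classical characterization of foliated Schwarz symmetry via polarization: by the Brock--Smets--Willem criterion (see \cite{wethSymmetrySolutionsVariational2010,vanschaftingenExplicitApproximationSymmetric2009}), it is enough to show that for every $H\in\mathcal{H}_0$ one has either $\hat u\ge \hat u\circ\sigma_H$ throughout $H\cap\Omega$, or $\hat u\le \hat u\circ\sigma_H$ throughout $H\cap\Omega$. Once this dichotomy is proved for every $H$, the lemma follows immediately.

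First I would recast \eqref{eq:reduced_eq} as a $2$-coupled system by setting $v(x)=\hat u(-x)$, so that $(\hat u, v)$ satisfies
\[
-\Delta_k \hat u+\hat u=\hat u^{3}+\beta \hat u v^{2}, \qquad -\Delta_k v+v=v^{3}+\beta v\hat u^{2}.
\]
The key geometric observation is that $0\in\partial H$, so the point reflection $\tau(x)=-x$ swaps $H$ with $H^{c}$ and commutes with $\sigma_H$. A short case check for $x\in H$ and for $x\in H^{c}$ then shows $\hat u_H\circ\tau=v^{H}$, where $v^{H}(x)=\min(v(x),v(\sigma_H x))$ on $H$ and $\max$ on $H^{c}$. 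Hence the hypothesis that $\hat u_H$ again solves \eqref{eq:reduced_eq} is equivalent to the polarized pair $(\hat u_H, v^{H})$ solving the same coupled system as $(\hat u, v)$.

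The main analytic step is then to compare these two positive solution pairs. Set $w_1=\hat u_H-\hat u\ge 0$ and $w_2=v-v^{H}\ge 0$ on $H\cap\Omega$; both vanish on $\partial H\cap\Omega$ and on $\partial\Omega\cap H$. Subtracting the respective equations and applying the identities $a^{3}-b^{3}=(a-b)(a^{2}+ab+b^{2})$ and $aA^{2}-bB^{2}=(a-b)A^{2}+b(A-B)(A+B)$ yields
\[
-\Delta_k w_1+(1-c_1)w_1=-\beta\,\hat u\,(v+v^{H})\,w_2, \quad -\Delta_k w_2+(1-c_2)w_2=-\beta\,v^{H}\,(\hat u+\hat u_H)\,w_1,
\]
with $c_1,c_2$ continuous and bounded on $\overline{H\cap\Omega}$. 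Because $\beta<0$ and each of $\hat u,v,\hat u_H,v^{H}$ is strictly positive (polarization preserves positivity of $\hat u$), both right-hand sides are nonnegative. Since $H\cap\Omega$ is connected (as $\Omega$ is a ball or annulus centred at $0$), the strong maximum principle applied separately to $w_1$ and $w_2$ forces the dichotomy $w_i\equiv 0$ in $H\cap\Omega$ or $w_i>0$ in its interior. Translated back, this gives precisely the comparison $\hat u\ge\hat u\circ\sigma_H$ on $H$ or $\hat u\le\hat u\circ\sigma_H$ on $H$, completing the polarization criterion.

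The most delicate step I expect is the rigorous application of the strong maximum principle to the twisted operator $-\Delta_k$, whose coefficients degenerate at $r=0$ when $\Omega$ is a ball. I would circumvent this by transporting the differential inequality back through the toplinear isomorphism $\Psi_k$ onto the original $(u_1,u_2)$-system on $\Omega$, where $-\Delta$ is the standard Laplacian and the strong maximum principle with bounded lower-order terms is textbook (e.g.\ Gilbarg--Trudinger). The hypothesis that $\hat u_H$ is a \emph{classical} solution is essential here: it guarantees that $w_1,w_2\in C^{2}$ so that the pointwise maximum principle applies without extra regularization. Everything else (connectedness, boundary values, sign of the coupling) is routine once the geometric identity $\hat u_H\circ\tau=v^{H}$ has been recorded.
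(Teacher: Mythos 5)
Your proposal is correct and follows essentially the same route as the paper: the paper rewrites \eqref{eq:reduced_eq} as the symmetric $2$-coupled system for $(\hat u,\hat v)$ with $\hat v(x)=\hat u(-x)$ and then invokes the proof of Theorem~4.3 of Tavares--Weth, which is exactly the polarization-dichotomy-plus-strong-maximum-principle argument you reconstruct (including the key identity $\hat u_H\circ\tau=v^H$ and the Brock--Smets--Willem characterization). Your additional care about the behaviour of $\Delta_k$ near the origin is a sensible refinement of the paper's bare assertion that $\Delta_k$ is uniformly elliptic, but it does not constitute a different approach.
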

One can rewrite Equation~\eqref{eq:reduced_eq} as
\[
	\begin{cases}
		-\Delta_k \hat{u} + \hat{u} = \hat{u}^3 + \beta \hat{u}\hat{v}^2,\\
		-\Delta_k \hat{v} + \hat{v} = \hat{v}^3 + \beta \hat{v}\hat{u}^2,\\
		\hat{u}(x) = \hat{v}(-x) \qquad \text{for all $x\in\Omega$}.
	\end{cases}
\]
Since the operator $\Delta_k$ given by~\eqref{eq:laplacian_k} is still a uniform elliptic operator for which the strong maximum principle holds, the same proof of~\cite{tavaresExistenceSymmetryResults2013}*{Theorem 4.3} still works.

\begin{lemma}\label{lm:polarization_ineq}
	Assume $\beta<0$. For each $u\in H^1_0(\Omega)$ and $H\in \mathcal{H}_0$, $\hat E_k^+(u_H)\leq \hat E_k^+(u)$.
\end{lemma}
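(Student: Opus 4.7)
My plan is to decompose
\[
    \hat E_k^+ = K + L + Q + I,
\]
where $K(u) = \int_0^\infty\int_0^{2\pi}(u_r^2 + \tfrac{k^2}{r^2} u_\theta^2) r \, \dd\theta\, \dd r$, $L(u) = \int_\Omega u^2$, $Q(u) = -\tfrac{1}{2}\int_\Omega (u^+)^4$, and $I(u) = -\tfrac{\beta}{2}\int_\Omega u^2(x)u^2(-x)\, \dd x$ is the only piece that couples values at antipodal points. The strategy is to show that $K$, $L$, $Q$ are invariant under polarization while $I(u_H) \leq I(u)$; this suffices.

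That $L$ and $Q$ are invariant is standard: $u \mapsto u_H$ only permutes the pair of values $u(x), u(\sigma_H x)$, so $\int F(u_H)\,\dd x = \int F(u)\,\dd x$ for any Borel $F$. For $K$, since $\sigma_H$ is a linear isometry fixing $0$, in polar coordinates it acts as $(r, \theta) \mapsto (r, 2\alpha - \theta)$; hence $(u\circ\sigma_H)_r^2(x) + \tfrac{k^2}{r^2}(u\circ\sigma_H)_\theta^2(x) = u_r^2(\sigma_H x) + \tfrac{k^2}{r^2}u_\theta^2(\sigma_H x)$, and the modified Dirichlet density is invariant under $u\mapsto u\circ\sigma_H$ (up to the change $x\to\sigma_H x$). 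Combined with the standard fact that $u_H$ locally equals $u$ or $u\circ\sigma_H$ almost everywhere, the pointwise sum of kinetic densities over each pair $\{x, \sigma_H x\}$ is preserved, so $K(u_H) = K(u)$.

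The heart of the argument is to prove $T(u_H) \leq T(u)$ for $T(u) = \int_\Omega u^2(x) u^2(-x) \, \dd x$; together with $-\beta/2 > 0$ this yields $I(u_H) \leq I(u)$. The main obstacle is that $T$ couples through the point reflection $x \mapsto -x$, which in general differs from the polarization reflection $\sigma_H$, so the usual pair-by-pair argument on $\{x, \sigma_H x\}$ is inadequate. The resolution is that $0 \in \partial H$ makes $\sigma_H$ linear and commute with $-\id$, so together they generate $G \cong \mathbb{Z}_2 \times \mathbb{Z}_2$ acting on $\Omega$; a generic $G$-orbit is $\mathcal{O} = \{x, \sigma_H x, -x, -\sigma_H x\}$, and for $x \in \mathrm{int}(H)$ the two orbit points in $H$ are $x$ and $-\sigma_H x$. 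Writing $A = u(x)^2,\ B = u(\sigma_H x)^2,\ C = u(-x)^2,\ D = u(-\sigma_H x)^2$, the contribution of $\mathcal{O}$ to $T(u)$ is proportional to $AC + BD$, while — for the positive $u$ of interest in the subsequent application — the contribution to $T(u_H)$ is proportional to $\max(A,B)\min(C,D) + \min(A,B)\max(C,D)$, since $u_H^2$ picks up the maximum of $u^2$ on the $H$-side of each $\sigma_H$-pair and the minimum on the other side. A two-case check on the orderings of $(A,B)$ and $(C,D)$ now gives the elementary inequality
\[
    AC + BD \;\geq\; \max(A,B)\min(C,D) + \min(A,B)\max(C,D),
\]
the difference vanishing when the orderings of $(A,B)$ and $(C,D)$ disagree and equaling $(A-B)(C-D) \geq 0$ when they agree. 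Integrating over a fundamental domain of $G$ yields $T(u_H) \leq T(u)$ and thus $\hat E_k^+(u_H) \leq \hat E_k^+(u)$.
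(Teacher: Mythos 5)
Your proof takes essentially the same route as the paper's: the same decomposition into kinetic, quadratic, quartic and interaction terms, with the first three invariant under polarization and the interaction term handled by exactly the pointwise inequality $AC+BD\geq\max(A,B)\min(C,D)+\min(A,B)\max(C,D)$ on each orbit of the group generated by $\sigma_H$ and $-\id$ (the paper folds the integral onto $\Omega\cap H$ and invokes ``the rearrangement inequality'' where you spell out the two-case check). Your parenthetical restriction to nonnegative $u$ is in fact substantive rather than cosmetic: for sign-changing $u$ one has $u_H^2\neq\max(u^2,u^2\circ\sigma_H)$ on $H$ in general and the pointwise inequality can fail (e.g.\ values $2,1,-1,-2$ at $x,\sigma_Hx,-x,-\sigma_Hx$ give $4+4=8$ for $u$ but $16+1=17$ for $u_H$), so the lemma as stated for all $u\in H^1_0(\Omega)$ is only justified---by your argument and equally by the paper's---for $u\geq 0$, which is all that is needed in the application to the positive minimizer in Lemma~\ref{lm:uh_still_in_Nk}.
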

\begin{proof}
	We write
	\begin{gather*}
		G(u) = 2\int_0^1\int_0^{2\pi} \left(u_r^2+\frac{k^2}{r^2}u_{\theta}^2\right)r \,\dd\theta\,\dd r,\\
		P(u)=\int_{\Omega} 2u^2-\frac{1}{2}\abs{u^+}^4\,\dd x,
		\qquad \text{and } Q(u) = -\beta \int_{\Omega} u^2(x) u^2(-x)\,\dd x.
	\end{gather*}

	Without loss of generality, we assume $H = \{(x,y)\colon x\geq 0\}$. Using polar coordinate system, we have
	\[
		u_H(r,\theta) = \begin{cases}
			\max\{u(r,\theta), u(r,-\theta)\} & \theta\in[0,\pi],\\
			\min\{u(r,\theta), u(r,-\theta)\} & \theta\in[-\pi,0].
		\end{cases}
	\]
	Thus, we have $G(u_H)=G(u)$ by a direct computation. It follows from a change of variables formula that $P(u_H)=P(u)$. It remains to show $Q(u_H)\leq Q(u)$.
	Indeed, by the rearrangement inequality and the definition of $u_H$, we have
	\begin{align*}
		&\int_{\Omega} u_H^2(x) u_{H}^2(-x)\,\dd x\\
		={}& \int_{\Omega\cap H} u_{H}^2(x) u_{H}^2(-x) + u_{H}^2(\sigma_H(x)) u_{H}^2(-\sigma_H(x))\,\dd x\\
		={}& \int_{\Omega\cap H} u_{H}^2(x) u_{H}^2(-x) + u_{H}^2(\sigma_H(x)) u_{H}^2(-\sigma_H(x))\,\dd x\\
		\leq{}& \int_{\Omega\cap H} u^2(x) u^2(-x) + u^2(\sigma_H(x)) u^2(-\sigma_H(x))\,\dd x\\
		={}& \int_{\Omega} u^2(x) u^2(-x)\,\dd x.
	\end{align*}
	Combining these gives $\hat E_k(u_H)\leq \hat E_k(u)$.
\end{proof}

\begin{lemma}\label{lm:uh_still_in_Nk}
	Assume $\beta<0$. Let $\hat{u}$ be the minimizer of $\hat{E}_k$ on $\hat{\mathcal{N}}_k$. Then $u_H$ is still on $\hat{\mathcal{N}}_k$, i.e., $\Psi_k u_H \in\mathcal{N}_k$.
\end{lemma}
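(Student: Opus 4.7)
The plan is to combine the fibering structure of $\hat E_k^+$ on the Nehari manifold with the polarization identity/inequality already established in Lemma~\ref{lm:polarization_ineq}, and then close the argument using the minimality of $\hat u$. Write $\hat E_k^+ = T - V$, where
\begin{equation*}
    T(u) = \int_0^\infty\!\!\int_0^{2\pi}\!\!\left(u_r^2+\tfrac{k^2}{r^2}u_\theta^2\right)r\,\dd\theta\,\dd r + \int_\Omega u^2,\qquad V(u) = \tfrac{1}{2}\int_\Omega\bigl(|u^+|^4+\beta u^2(x)u^2(-x)\bigr)\dd x,
\end{equation*}
so $T$ is $2$-homogeneous and $V$ is $4$-homogeneous; then $u\in\hat{\mathcal{N}}_k^+$ is equivalent to $T(u) = 2V(u)$, and the fiber map $t\mapsto \hat E_k^+(tu) = t^2 T(u)-t^4 V(u)$ attains its unique positive maximum at $t^2 = T(u)/(2V(u))$ with value $T(u)^2/(4V(u))$.

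From the proof of Lemma~\ref{lm:polarization_ineq} I read off that polarization across a halfspace $H\in\mathcal{H}_0$ preserves the full quadratic part ($G(u_H)=G(u)$ and $\int_\Omega u_H^2=\int_\Omega u^2$), hence $T(u_H) = T(\hat u)$; moreover it preserves $\int_\Omega(u^+)^4$ while the rearrangement inequality gives $\int_\Omega u_H^2(x)u_H^2(-x)\,\dd x\leq \int_\Omega u^2(x)u^2(-x)\,\dd x$. Since $\beta<0$, these two facts combine to yield $V(u_H)\geq V(\hat u)$. Because $\hat u\in\hat{\mathcal{N}}_k^+$ gives $V(\hat u) = T(\hat u)/2>0$, in particular $V(u_H)>0$, so $u_H$ satisfies the positivity hypothesis~\eqref{eq:asp_nehari_rds} and Lemma~\ref{lm:nehari_rds} provides a unique $\lambda>0$ with $\lambda u_H\in\hat{\mathcal{N}}_k^+$ and $\lambda^2 = T(u_H)/(2V(u_H)) = T(\hat u)/(2V(u_H))\leq 1$.

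It remains to force $\lambda=1$. Evaluating at the fiber maximum and using $T(u_H)=T(\hat u)$ and $V(u_H)\geq V(\hat u)$,
\begin{equation*}
    \hat E_k^+(\lambda u_H) = \frac{T(u_H)^2}{4V(u_H)} = \frac{T(\hat u)^2}{4V(u_H)} \leq \frac{T(\hat u)^2}{4V(\hat u)} = \hat E_k^+(\hat u),
\end{equation*}
while the minimality of $\hat u$ on $\hat{\mathcal{N}}_k^+$ and $\lambda u_H\in\hat{\mathcal{N}}_k^+$ force the reverse inequality. Equality throughout then gives $V(u_H)=V(\hat u)$, and hence $\lambda^2 = T(\hat u)/(2V(\hat u)) = 1$. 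Therefore $u_H=\lambda u_H\in\hat{\mathcal{N}}_k^+$, equivalently $\Psi_k u_H\in\mathcal{N}_k^+$.

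The only delicate point is the polarization-invariance of the angular kinetic integral $\int\tfrac{k^2}{r^2}u_\theta^2\,r\,\dd\theta\,\dd r$; this is exactly the identity $G(u_H)=G(u)$ used in Lemma~\ref{lm:polarization_ineq} and holds because $\sigma_H$ with $0\in\partial H$ is an orthogonal transformation compatible with the polar decomposition. No new analytic difficulty is introduced beyond what is already in that lemma, and the rest is the standard Nehari-fiber plus minimality argument.
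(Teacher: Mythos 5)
Your proof is correct and follows essentially the same route as the paper: check that $u_H$ inherits the positivity condition of Lemma~\ref{lm:nehari_rds} from the rearrangement inequality, project $\lambda u_H$ onto $\hat{\mathcal{N}}_k^+$, and use minimality of $\hat u$ to force $\lambda=1$. The only cosmetic difference is that you make the homogeneity explicit via the fiber value $T^2/(4V)$, whereas the paper reaches the same chain of inequalities by writing $(\lambda\hat u)_H=\lambda\hat u_H$ and applying Lemma~\ref{lm:polarization_ineq} to $\lambda\hat u$.
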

\begin{proof}
	The condition $\hat{u}\in\hat{\mathcal{N}}_k$ implies that
	\[
		\int_{\Omega} \abs{\hat{u}^+}^4 + \beta \int_{\Omega} \hat{u}^2(x) \hat{u}^2(-x)\,\dd x = \int_\Omega k^2\abs{\nabla \hat{u}}^2 + \hat{u}^2 > 0
	\]
	From the third part of the proof of Lemma~\ref{lm:polarization_ineq}, we conclude that
	\[
		\int_{\Omega} \hat{u}_H^2(x) \hat{u}_{H}^2(-x)\,\dd x \leq \int_{\Omega} \hat{u}^2(x) \hat{u}^2(-x)\,\dd x,
	\]
	and hence that
	\[
		\int_{\Omega} \abs{\hat{u}^+_H}^4 + \beta \int_{\Omega} \hat{u}_H^2(x) \hat{u}_H^2(-x)\,\dd x >0.
	\]
	Thus, by Lemma~\ref{lm:nehari_rds}, there is a $\lambda$ such that $\lambda \hat{u}_H\in\hat{\mathcal{N}}_k$ and
	\[
		\hat{E}_k(\lambda u_H) = \sup_{t > 0} \hat{E}_k(t u_H).
	\]
	Write $m=\hat{E}_k(\hat{u})=\inf_{\hat{\mathcal{N}}_k}\hat E$. Since $(\lambda \hat{u})_H = \lambda u_H$, we have
	\[
		m\leq \hat{E}_k(\lambda \hat{u}_H)\leq \hat{E}_k(\lambda \hat{u}) \leq \hat{E}_k(\hat{u}) = m.
	\]
	By the uniqueness given by Lemma~\ref{lm:nehari_rds}, we have $\lambda = 1$. Hence, $\hat{u}_H \in \hat{\mathcal{N}}_k$.
\end{proof}
\begin{theorem}
	The minimizer $\hat{u}$ of $\hat{E}_k$ on $\hat{\mathcal{N}}_k$ is foliated Schwarz symmetry.
\end{theorem}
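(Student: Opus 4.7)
The plan is to combine the three preceding lemmas in the obvious way: use the polarization inequality (Lemma~\ref{lm:polarization_ineq}) together with the $u\mapsto u_H$ invariance of $\hat{\mathcal{N}}_k$ (Lemma~\ref{lm:uh_still_in_Nk}) to show that $\hat u_H$ is itself a minimizer for every $H\in\mathcal{H}_0$, and then invoke the sufficient condition of Lemma~\ref{lm:fss}.

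First I would observe that, because $\mathcal{N}_k^+$ (and hence $\hat{\mathcal{N}}_k$ via $\Psi_k$) is a natural constraint, the minimizer $\hat u$ is an unconstrained critical point of $\hat E_k$ and therefore satisfies~\eqref{eq:reduced_eq} weakly. Since $\Delta_k$ is uniformly elliptic on $\Omega$ and the nonlinearity is cubic in $\hat u(x)$ and $\hat u(-x)$, elliptic bootstrap together with the $H^1_0$ boundary data upgrades $\hat u$ to an element of $C^2(\Omega)\cap C^1(\bar\Omega)$, putting us in the hypothesis class of Lemma~\ref{lm:fss}.

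Next, for an arbitrary $H\in\mathcal{H}_0$, Lemma~\ref{lm:uh_still_in_Nk} yields $\hat u_H\in\hat{\mathcal{N}}_k$, and Lemma~\ref{lm:polarization_ineq} gives $\hat E_k(\hat u_H)\leq \hat E_k(\hat u) = m$, where $m := \inf_{\hat{\mathcal{N}}_k}\hat E_k$. Since $m$ is the infimum, equality must hold, so $\hat u_H$ is also a minimizer; replaying the natural-constraint plus elliptic-regularity argument shows that $\hat u_H$ is likewise a classical solution of~\eqref{eq:reduced_eq}. As $H$ was arbitrary, Lemma~\ref{lm:fss} then produces some $e\in\partial B_1(0)$ with respect to which $\hat u$ is foliated Schwarz symmetric, which is the desired conclusion.

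The main potential obstacle I anticipate is the regularity of $\hat u_H$: defined pointwise by a max/min with a reflected copy, it is a priori only in $H^1_0(\Omega)$, so smoothness is not automatic. The rescue is the chain \emph{minimizer $\Rightarrow$ critical point $\Rightarrow$ weak solution $\Rightarrow$ classical solution}, where the first implication is forced by the equality $\hat E_k(\hat u_H)=m$ arising from Lemmas~\ref{lm:polarization_ineq} and~\ref{lm:uh_still_in_Nk}, and the last is standard elliptic theory for $\Delta_k$. A secondary point to watch is that Lemma~\ref{lm:fss} outputs only some distinguished direction $e$; since any $H\in\mathcal{H}_0$ is allowed, the selection of $e$ is dictated by the minimizer itself and need not be prescribed a priori.
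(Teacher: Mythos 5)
Your proposal is correct and follows essentially the same route as the paper: combine Lemma~\ref{lm:polarization_ineq} and Lemma~\ref{lm:uh_still_in_Nk} to conclude that $\hat u_H$ is again a minimizer (hence a classical solution of~\eqref{eq:reduced_eq}) for every $H\in\mathcal{H}_0$, then invoke Lemma~\ref{lm:fss}. The only difference is that you make explicit the intermediate steps (equality at the infimum, natural constraint, elliptic regularity) that the paper's one-line proof leaves implicit.
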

\begin{proof}
	By Lemmas~\ref{lm:polarization_ineq} and~\ref{lm:uh_still_in_Nk}, we see that, for every $H\in \mathcal{H}_0$, $u_H$ is still a classical solution of \eqref{eq:2coupled}. Then from Lemma~\ref{lm:fss}, we conclude that $u$ is foliated Schwarz symmetry with respect to some point.
\end{proof}
\begin{corollary}
	The minimizer $\hat{u}$ of $\hat{E}_k$ on $\hat{\mathcal{N}}_k$ has minimal period $2\pi$ and each component of $U=\Psi_k u$ has minimal period $2\pi/k$.
\end{corollary}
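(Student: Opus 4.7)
The plan is to leverage the foliated Schwarz symmetry of the minimizer established in the preceding theorem and then to show that any angular periodicity shorter than $2\pi$ is incompatible with both this symmetry and the Nehari constraint.

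First I would fix the axis $e \in \partial B_1(0)$ with respect to which $\hat{u}$ is foliated Schwarz symmetric and use polar coordinates centred at the origin with $\theta$ measured from $e$. By the foliated Schwarz hypothesis $\hat{u}(r,\theta) = \hat{u}(r,-\theta)$ and $\theta \mapsto \hat{u}(r,\theta)$ is monotone non-increasing on $[0,\pi]$ for a.e.\ $r$. Any period of $\hat{u}$ must divide $2\pi$, so a hypothetical minimal period $T < 2\pi$ would equal $2\pi/m$ for some integer $m \geq 2$, and in particular satisfy $T \leq \pi$. Then $\hat{u}(r,T) = \hat{u}(r,0)$ together with non-increasingness on $[0,\pi]$ forces $\hat{u}(r,\cdot)$ to be constant on $[0,T]$, and iterating the period forces constancy on the whole circle; hence $\hat{u}$ would have to be radial.

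To exclude the radial case I would exploit Equation~\eqref{eq:reduced_eq} directly. A radial $\hat{u}$ satisfies $\hat{u}(-x) = \hat{u}(x)$, so the equation reduces to $-\Delta \hat{u} + \hat{u} = (1+\beta)\hat{u}^3$; testing against $\hat{u}$ yields $\int_\Omega(\abs{\nabla\hat{u}}^2 + \hat{u}^2) = (1+\beta)\int_\Omega \hat{u}^4$. When $\beta \leq -1$ the right side is non-positive while the left side is strictly positive since $\hat{u} \in \hat{\mathcal{N}}_k$ is non-trivial, a contradiction. When $\beta \in (-1,0)$ a positive radial solution $w_0$ does exist, and I would rule it out by producing a competitor in $\hat{\mathcal{N}}_k$ with strictly lower energy: perturbing $(w_0,w_0)$ inside $\mathcal{M}_k^+$ along an admissible direction $(\phi, \mathscr{R}_{\pi/k}\phi)$ with $\phi$ drawn from a low-lying eigenmode of the weighted linear problem $-\Delta \phi + \phi = \lambda w_0^2 \phi$ yields a negative second variation, and Nehari-projecting the perturbed pair gives an element with $E^+$ strictly below $E^+(w_0,w_0)$, contradicting minimality.

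Combining these two steps, the minimal period of $\hat{u}$ must equal $2\pi$. Under the toplinear isomorphism $\Psi_k$ each component of $U = \Psi_k \hat{u}$ satisfies $u_j(r,\theta) = \hat{u}(r, k\theta + c_j)$ for some constant $c_j$, so the minimal period of $u_j$ is exactly $2\pi/k$. The main obstacle I expect is the strict energy comparison for $\beta \in (-1,0)$: radial functions are genuine Nehari critical points in that regime and must be excluded as ground states through a spectral/variational argument rather than by the simple algebraic obstruction that handles $\beta \leq -1$.
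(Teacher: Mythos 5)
Your first step is exactly the argument the paper leaves implicit: foliated Schwarz symmetry gives monotonicity of $\hat u(r,\cdot)$ in the angle measured from the symmetry axis, any angular period $2\pi/m$ with $m\geq 2$ then forces $\hat u(r,\cdot)$ to be constant on each circle, i.e.\ $\hat u$ radial, and minimal periods transport correctly through $\Psi_k$. Your exclusion of the radial case when $1+\beta\leq 0$ is also correct, and it is in substance a rederivation of Lemma~\ref{lm:no_fixed_point}: a radial $\hat u$ corresponds to $U=\Psi_k\hat u$ with equal components, i.e.\ a fixed point of $\sigma$, and assumption~\ref{itm:as_d} (which for \eqref{eq:2coupled} reads $1+\beta\leq 0$) rules such points out of $\mathcal{N}_k^+$. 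Up to there you and the paper are on the same track.

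The genuine gap is the case $\beta\in(-1,0)$, which you rightly flag but do not close, and your proposed fix would in fact fail for part of that range. Writing the reduced energy on the Nehari set as $F_\beta(u)=F_0(u)/(1+\beta Q(u))$ with $Q(u)=\int_\Omega u^2(x)u^2(-x)\,\dd x\big/\int_\Omega (u^+)^4\,\dd x$, a non-radial perturbation of the radial pair increases $F_0$ and decreases $Q$ both at quadratic order, and it lowers $F_\beta$ only if the ratio of these two quadratic forms is below $|\beta|/(1+\beta)$, which tends to $0$ as $\beta\to 0^-$. So for $\beta$ close to $0^-$ on a ball the radial pair is a strict local minimizer of $\hat E_k^+$ on $\hat{\mathcal{N}}_k^+$, no admissible direction with negative second variation exists, and it is not even clear that the radial competitor can be excluded as the global minimizer in that regime. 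The paper offers no proof of this corollary, and its only mechanism for excluding the diagonal, Lemma~\ref{lm:no_fixed_point}, likewise requires $1+\beta\leq 0$. The honest conclusion is that your argument establishes the corollary under assumption~\ref{itm:as_d} (here $\beta\leq -1$), while for $\beta\in(-1,0)$ both your sketch and the statement as written require a substantive additional argument (or a restriction on $\beta$) that is currently missing.
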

\section{Extensions}
\label{sec:ext}
In the preceding sections for clarity and simplicity of presentations we have focused on radially symmetric domains and solutions invariant under rotations symmetries. Our methods can be adapted for more general domains, as well as solutions invariant under more general group actions.
We list some here with the proofs omitted or just sketched.

\begin{remark}
	Checking through the proofs in Sections~\ref{sec:multiple_positive_solutions} and~\ref{sec:multiple_nodal_solutions} we see that,
	in Theorem~\ref{th:main} and~\ref{th:main_nod}, for dimension $n=3$, we only need $\Omega$ to be invariant under rotation symmetries with respect to some fixed axis, e.g., $\Omega$ can be cylinder-type or cone-type domains. We leave the statements to interested readers.
\end{remark}

\begin{remark}
The non-radial solutions we have constructed so far all are invariant under some rotation type symmetries. One would wonder whether this can be done for some group invariant functions under other type groups (as subgroups of $O(n)$). It turns out that this issue is more delicate than it might be thought of. We can do this for positive non-radial solutions but not sure about for nodal solutions.
We discuss this next, which was motivated by the work of \cite{weiNonradialSymmetricBound2007} where again within each symmetric class of functions they prove the existence of a ground state which is a non-radial positive solution. Our goal is to show the existence of an infinite sequence of non-radial positive solutions that share the same group invariance. We sketch a proof here.
\end{remark}

Let $\mathcal{G}$ be a nontrivial subgroup of $O(n)$. We denote by $\mathcal{G}x$ the set $\{gx\in\mathbb{R}^n\colon g\in\mathcal{G}\}$, the orbit of $x$ under $\mathcal{G}$. A function $u\colon \Omega\to\mathbb{R}$ is said to be $\mathcal{G}$-symmetric if $u=u\circ g$ for all $g\in\mathcal{G}$.
\begin{definition}\label{def:ap}	
	Let $\mathfrak{b}\in O(n)$, and let $\mathcal{G}$ be a nontrivial \emph{compact} subgroup of $O(n)$. We call the pair $(\mathcal{G}, \mathfrak{b})$ admissible if
	\begin{enumerate}[(a)]
		\item\label{enu:ap_a} $\mathfrak{b}$ is contained in the normalizer of $\mathcal{G}$, and $\mathfrak{b}^p\in\mathcal{G}$.
		\item\label{enu:ap_b} There exists $x_0\in\mathbb{R}^n\setminus\{0\}$ such that
			\[
				\mathcal{G}(\mathfrak{b}^s x_0) \cap \mathcal{G}(\mathfrak{b}^t x_0) = \varnothing \qquad \text{for $s\not\equiv t \pmod{p}$.}
			\]
	\end{enumerate}
\end{definition}
\begin{remark}
	In \cite{weiNonradialSymmetricBound2007}, Wei and Weth were the first to introduce the definition of admissible pair to study the existence of solution with invariance under group $\mathcal{G}$ of a coupled system in $\mathbb{R}^N$. Our assumptions are somewhat weaker than that in \cite{weiNonradialSymmetricBound2007}, but this is sufficient for our purpose.
\end{remark}

Condition~\ref{enu:ap_b} implies that $\mathfrak{b}, \mathfrak{b}^2, \dots, \mathfrak{b}^{p-1}\notin\mathcal{G}$ and $\mathfrak{b}^p\in\mathcal{G}$.
Condition~\ref{enu:ap_a} ensures that $\mathcal{G}_\mathfrak{b} = \mathcal{G}\cup \mathfrak{b}\mathcal{G}\cup\dots\cup \mathfrak{b}^{p-1}\mathcal{G}$ is a group and therefore makes it legitimate to define an action $\star$ of $\mathcal{G}_\mathfrak{b}$ on $(H^1_0(\Omega))^N$ as
\begin{gather*}
	\mathfrak{b} \star (u_1,\dots,u_N) = \sigma(u_1\circ \mathfrak{b}^{-1},\dots, u_N\circ \mathfrak{b}^{-1}),\\
	\mathfrak{g} \star (u_1,\dots,u_N) =  (u_1\circ \mathfrak{g}^{-1},\dots,u_N\circ \mathfrak{g}^{-1}) \qquad \text{for $\mathfrak{g}\in\mathcal{G}$}.
\end{gather*}
The fixed point space of $\star$ is precisely
\begin{align*}
	\mathcal{M}_{(\mathcal{G},\mathfrak{b})} = \{(u_1,\dots, u_N)\in(H^1_0(\Omega))^N \colon
	&\text{$u_i$ is $\mathcal{G}$-symmetric for $i=1,2,\dots,N$} \\
	&\text{and $u_{j+1} = u_{j}\circ \mathfrak{b}^{-1}$ for $p\nmid j$}\}.
\end{align*}
We proceed by defining the Nehari-type manifold in $\mathcal{M}_{(\mathcal{G},\mathfrak{b})}$ by
\begin{gather*}
	\mathcal{N}_{(\mathcal{G},\mathfrak{b})} = \{U=(u_1,\dots, u_N)\in \mathcal{M}_{(\mathcal{G},\mathfrak{b})} \colon u_j\neq 0,\partial_j E^+ (U)u_j=0,\forall j=1,2,\dots,N\}.
\end{gather*}
\begin{example}
	\begin{enumerate}
		\item Let $k$ be a positive integer and let $\mathfrak{b} = R_{2\pi/(pk)}$ (we recall that $R_{\theta}$ is defined by~\eqref{eq:R_theta}).
		Put $\mathcal{G}_0 = \{\id, \mathfrak{b}^p, \dots, \mathfrak{b}^{(k-1)p}\}$. Then $\mathcal{G}_0$ is a finite subgroup of $O(n)$ and $(\mathcal{G}_0,\mathfrak{b})$ clearly satisfies the admissibility condition~\ref{enu:ap_a}.
		Taking $x_0=(1,0,0)$, one can easily check that~\ref{enu:ap_b} also holds.
		In this case, $\mathcal{M}_{(\mathcal{G}_0,\mathfrak{b})}$ is precisely the subspace $\mathcal{M}_k^+$ we have defined in Section~\ref{sec:multiple_positive_solutions}.
		\item We use the notation from the previous example. Let $\mathcal{G}$ be the group generated by the reflection $F\colon (x,y,z)\mapsto(x,y,-z)$ and elements in $\mathcal{G}_0$. Since $\mathfrak{b}$ commutes with $F$, and $\mathcal{G}x_0 = \mathcal{G}_0 x_0$, $(\mathcal{G},\mathfrak{b})$ is also an admissible pair.
		\item Let $n=3$ and $N=2$. Set $\mathcal{G} = \{R_\theta\colon \theta\in[0,2\pi)\}$ and let $\mathfrak{b}$ be the reflection $(x,y,z) \mapsto (x,y,-z)$. Since $\mathfrak{b}R_\theta = R_{\theta}\mathfrak{b}$, the admissibility condition~\ref{enu:ap_a} is satisfied for $(\mathcal{G},\mathfrak{b})$. Taking $x_0 = (0, 0, 1)$, we have $\mathcal{G}x_0 = \{(0,0,1)\}$, $\mathcal{G}(\mathfrak{b}x_0) = \{(0,0,-1)\}$ and hence $\mathcal{G}x_0\cap \mathcal{G}(\mathfrak{b}x_0) = \varnothing$. Therefore, $(\mathcal{G},\mathfrak{b})$ is an admissible pair.
		\item (An example from~\cite{weiNonradialSymmetricBound2007}) Let $n=3$ and $N=2$. Consider the tetrahedral group $\mathcal{G}$ generated by the coordinate permutations $(x_1,x_2,x_3)\mapsto(x_{\pi_1}, x_{\pi_2}, x_{\pi_3})$ and the map $(x_1,x_2,x_3)\mapsto (x_1,-x_2,-x_3)$. Let $\mathfrak{b}$ be the reflection $x\mapsto -x$. Then $\mathfrak{b}$ commutes with each elements of $\mathcal{G}$. Taking $x_0 = (1,1,1)$ we have
			\[
				\mathcal{G}x_0 = \{(1,1,1), (-1,-1,1), (1,-1,-1), (-1,1,-1)\},
			\]
			and
			\[
				\mathcal{G}(\mathfrak{b}x_0) = \{(-1,-1,-1), (1,1,-1), (-1,1,1), (1,-1,1)\}.
			\]
		Hence $\mathcal{G}x_0\cap \mathcal{G}(\mathfrak{b}x_0) = \varnothing$ and $(\mathcal{G},\mathfrak{b})$ is an admissible pair.
	\end{enumerate}
\end{example}

Similarly to that in Section~\ref{sec:multiple_positive_solutions}, we have the following lemma.
\begin{lemma}
	The subspace $\mathcal{M}_{(\mathcal{G}, \mathfrak{b})}$ and the submanifold $\mathcal{N}_{(\mathcal{G}, \mathfrak{b})}$ are natural constraints, i.e., every constrained critical point of $E$ on them is also a critical point of $E$.
\end{lemma}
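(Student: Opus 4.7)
The plan is to follow exactly the strategy used earlier for $\mathcal{M}_k^+$ and $\mathcal{N}_k^+$, replacing the cyclic group $\mathbb{Z}_{pk}$ by the (possibly infinite) compact group $\mathcal{G}_\mathfrak{b} = \mathcal{G} \cup \mathfrak{b}\mathcal{G} \cup \dots \cup \mathfrak{b}^{p-1}\mathcal{G}$. First, I would verify that $\star$ defines a genuine isometric representation of $\mathcal{G}_\mathfrak{b}$ on $(H^1_0(\Omega))^N$. The key algebraic check is that the formula is compatible with the group law: for $\mathfrak{g} \in \mathcal{G}$, since $\mathfrak{b}$ normalizes $\mathcal{G}$, we have $\mathfrak{b}\mathfrak{g} = \mathfrak{g}'\mathfrak{b}$ for some $\mathfrak{g}' \in \mathcal{G}$, and one checks that $\mathfrak{b} \star (\mathfrak{g} \star U) = (\mathfrak{b}\mathfrak{g}) \star U$ as required. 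Applying $\mathfrak{b}$ repeatedly, $\mathfrak{b}^p \star U = \sigma^p(U \circ \mathfrak{b}^{-p}) = U \circ \mathfrak{b}^{-p}$ since $\sigma^p = \id$, and this agrees with the action by $\mathfrak{b}^p \in \mathcal{G}$. Because each element of $\mathcal{G}_\mathfrak{b}$ acts on $\Omega$ by an orthogonal transformation, the $(H^1_0)^N$-norm is preserved.

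Next, I would identify the fixed point space. A tuple $U = (u_1, \dots, u_N)$ is fixed by every $\mathfrak{g} \in \mathcal{G}$ iff each $u_j$ is $\mathcal{G}$-symmetric; it is additionally fixed by $\mathfrak{b}$ iff $u_{j+1} = u_j \circ \mathfrak{b}^{-1}$ whenever $p \nmid j$ (and the wrap-around $u_{pb-p+1} = u_{pb} \circ \mathfrak{b}^{-1}$ holds automatically, using that $u_{pb-p+1}$ is $\mathcal{G}$-symmetric and $\mathfrak{b}^p \in \mathcal{G}$). Thus $\mathrm{Fix}(\star) = \mathcal{M}_{(\mathcal{G},\mathfrak{b})}$. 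I then verify $E^+(\mathfrak{b} \star U) = E^+(U)$: the gradient and $L^4$ terms are invariant under any change of variables by an orthogonal transformation, while the quadratic coupling term $\sum_{i \ne j} \beta_{ij} u_i^2 u_j^2$ is invariant under $\sigma$ by assumption (C) on the coupling matrix $\mathcal{B}$, and the assumption (A) on the $\lambda_j$'s makes the mass term $\sum \lambda_j u_j^2$ respect the permutation $\sigma$. By Palais' principle of symmetric criticality, critical points of $E^+|_{\mathcal{M}_{(\mathcal{G},\mathfrak{b})}}$ are critical points of $E^+$ on the full space.

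For the Nehari-type manifold, the argument is standard and parallels the one sketched in \cite{dancerPrioriBoundsMultiple2010} and \cite{tianMultipleSolitaryWave2011}. A constrained critical point $U = (u_1, \dots, u_N) \in \mathcal{N}_{(\mathcal{G},\mathfrak{b})}$ yields Lagrange multipliers $\alpha_1, \dots, \alpha_N$ such that
\[
\partial_j E^+(U) = \sum_{i=1}^N \alpha_i \partial_i G_i(U)
\]
at $U$ in the $\mathcal{M}_{(\mathcal{G},\mathfrak{b})}$-direction, where $G_j(U) = \partial_j E^+(U) u_j$. Testing this identity against $u_j$ and exploiting that the constraint $G_j(U) = 0$ is a sum of a positive quadratic and a negative quartic part in $u_j$ gives a linear system whose matrix is diagonally dominant (since the diagonal entries $-2\mu_j \int (u_j^+)^4 < 0$ dominate the off-diagonal entries $-2\beta_{ij}\int u_i^2 u_j^2 \ge 0$ under the repulsivity hypothesis, together with the fact that $u_j \ne 0$). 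This forces $\alpha_j = 0$ for all $j$, so $\partial_j E^+(U) = 0$ on $\mathcal{M}_{(\mathcal{G},\mathfrak{b})}$, and combined with the first part of the lemma, $U$ is a critical point of $E^+$ on $(H^1_0(\Omega))^N$.

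The main obstacle, and the only place where condition (C) on $\mathcal{B}$ truly enters, is in checking the $\sigma$-invariance of the coupling term of $E^+$ under the action of $\mathfrak{b}$; the normalizer condition (a) in Definition~\ref{def:ap} is exactly what is needed to turn $\mathcal{G}_\mathfrak{b}$ into a group so that Palais' principle applies, while condition (b) plays no role here (it will only be used later, when constructing test maps with disjoint supports to estimate the index). The Nehari invertibility step, while routine, needs the structural assumptions (A)--(D) for the coefficient matrix to be nondegenerate; no new ideas beyond those in \cite{tianMultipleSolitaryWave2011} are required.
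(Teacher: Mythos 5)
Your proposal is correct and follows essentially the same route as the paper: the paper omits the proof here, appealing to the Section~2 argument, which is precisely what you carry out --- realize $\mathcal{M}_{(\mathcal{G},\mathfrak{b})}$ as the fixed-point space of the isometric $\mathcal{G}_{\mathfrak{b}}$-action $\star$ (using the normalizer condition and $\sigma^p=\id$), invoke Palais' symmetric criticality, and dispose of the Nehari constraint by the standard Lagrange-multiplier/diagonal-dominance argument from the cited references. Your additional checks (well-definedness of the action at $\mathfrak{b}^p\in\mathcal{G}$, the wrap-around identity in the fixed-point space, and the role of (A) and (C) in the $\sigma$-invariance of $E^+$) are accurate elaborations of details the paper leaves implicit.
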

Likewise, we only need to prove the following proposition.
\begin{proposition}
	Let $(\mathcal{G}, \mathfrak{b})$ be an admissible pair. Denote by $\mathbb{S}^{2m-1}$ the unit sphere in $\mathbb{C}^m$. For each positive integer  $m$, there exists a continuous map $\psi\colon \mathbb{S}^{2m-1} \to\mathcal{N}_{(\mathcal{G}, \mathfrak{b})}$, such that
	\[
		\psi( e^{2\pi i/p} z) = \sigma\psi(z).
	\]
\end{proposition}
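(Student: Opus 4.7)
The plan is to adapt the template of Proposition~\ref{prop:spl_pos}: first construct a continuous, $\sigma$-equivariant map $\psi_0\colon \mathbb{S}^{2m-1}\to\mathcal{M}_{(\mathcal{G},\mathfrak{b})}\setminus\{0\}$ whose image consists of tuples of nonnegative functions with pairwise disjoint supports, then precompose with the component-wise Nehari rescaling $\Lambda$ to obtain $\psi = \Lambda\circ\psi_0\colon \mathbb{S}^{2m-1}\to\mathcal{N}_{(\mathcal{G},\mathfrak{b})}$. Since $\Lambda$ rescales each coordinate independently by a $\sigma$-invariant quantity (the scalars $\|U_j\|_j/\|U_j^+\|_{L^4}^2$ are permuted compatibly by $\sigma$ thanks to assumption~\ref{itm:as_a}), $\Lambda$ commutes with $\sigma$, so equivariance transfers from $\psi_0$ to $\psi$. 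The technical core, replacing the continuous rotation $\mathscr{R}_{\theta/k}$ available in the rotation-invariant setting, is to build for each $(i,j)$ a continuous family of bumps $v_\theta^{i,j}$ parameterised by $\theta\in\mathbb{R}/(2\pi\mathbb{Z})$ satisfying the twisted-shift identity
\[
	v_{\theta+2\pi/p}^{i,j} = v_\theta^{i,j}\circ\mathfrak{b}^{-1}.
\]

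First, I use admissibility to produce $2mB$ pairwise disjoint $\mathcal{G}_\mathfrak{b}$-orbits: set $y_{i,j}^{(q)} = \lambda_{i,j}^{(q)} x_0$ for $2mB$ distinct small positive scalars $\lambda_{i,j}^{(q)}$ (indexed by $i\in\{1,\dots,m\}$, $j\in\{1,\dots,B\}$, $q\in\{1,2\}$); since $\mathcal{G}_\mathfrak{b}\subset O(n)$ preserves norms, orbits at different radii are disjoint, and each $y_{i,j}^{(q)}$ inherits admissibility from $x_0$. Pick nonnegative $\mathcal{G}$-symmetric bumps $\varphi_q^{i,j}\in C_0^\infty(\Omega)$, $q\in\{0,1\}$, supported in small neighborhoods of $\mathcal{G} y_{i,j}^{(q+1)}$, and extend to $2p$ functions by $\varphi_{2s+q}^{i,j}:=\varphi_q^{i,j}\circ\mathfrak{b}^{-s}$. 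The normalizer condition~\ref{enu:ap_a} keeps each $\varphi_\ell^{i,j}$ $\mathcal{G}$-symmetric, and orbit disjointness makes the supports of all the $\varphi_\ell^{i,j}$ pairwise disjoint. Then pick a partition of unity $\{\eta_\ell\}_{\ell=0}^{2p-1}$ on $\mathbb{R}/(2\pi\mathbb{Z})$ with $\eta_\ell$ supported in $((\ell-1)\pi/p,(\ell+1)\pi/p)$ and $\eta_\ell(\theta+2\pi/p)=\eta_{\ell-2}(\theta)$, and set $v_\theta^{i,j}:=\sum_\ell \eta_\ell(\theta)\varphi_\ell^{i,j}$. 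The twisted-shift identity follows from $\varphi_{\ell+2}^{i,j}=\varphi_\ell^{i,j}\circ\mathfrak{b}^{-1}$ combined with the shift rule for $\eta_\ell$.

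With this in hand, define $\psi_0(z)=(U_1^{(j)},\dots,U_p^{(j)})_{j=1}^B$ via
\[
	U_\ell^{(j)} := \sum_{i=1}^m r_i\, v_{\theta_i + 2\pi(\ell-1)/p}^{i,j}, \qquad z=(r_1 e^{i\theta_1},\dots,r_m e^{i\theta_m}).
\]
The twisted-shift identity gives $U_{\ell+1}^{(j)} = U_\ell^{(j)}\circ\mathfrak{b}^{-1}$, with $\mathfrak{b}^p\in\mathcal{G}$ and $\mathcal{G}$-symmetry closing the cycle, so $\psi_0(z)\in\mathcal{M}_{(\mathcal{G},\mathfrak{b})}$. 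The equivariance $\psi_0(e^{2\pi i/p}z) = \sigma\psi_0(z)$ follows from the index shift $\ell\mapsto\ell+1$. Each $U_\ell^{(j)}$ is nonnegative (a nonnegative combination of nonnegative bumps) and nonzero (the partition sums to $1$), so $\Lambda$ applies component-wise to yield $\psi$.

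The main obstacle is the pairwise disjointness of the supports of the $U_\ell^{(j)}$. Any continuous family $\theta\mapsto v_\theta^{i,j}$ satisfying the twisted-shift identity must transition through intermediate states whose support straddles two adjacent $\mathfrak{b}$-shifted slots, so the active support of $v_\theta^{i,j}$ occupies at most two consecutive indices out of $2p$. Applying $\mathfrak{b}^{-(\ell-1)}$ (equivalently, shifting $\theta_i$ by $2\pi(\ell-1)/p$) translates this active window by $2(\ell-1)$ positions in the $2p$-slot cycle; so two windows with distinct $\ell,\ell'\in\{1,\dots,p\}$ are separated by an even amount in $\{2,4,\dots,2p-2\}\pmod{2p}$, and hence remain disjoint. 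A naive $p$-slot construction would force adjacent $\ell$'s to overlap in one slot; doubling to $2p$ slots—equivalently, using two independent $\mathfrak{b}$-orbits $\lambda_{i,j}^{(1)}x_0$ and $\lambda_{i,j}^{(2)}x_0$ per $(i,j)$—is the essential fix, mirroring the $2p$-slot device of the nodal case.
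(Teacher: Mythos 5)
Your proposal is correct and follows essentially the same route as the paper: a continuous $\theta$-family of $\mathcal{G}$-symmetric bumps satisfying the twisted-shift identity $v_{\theta+2\pi/p}=v_\theta\circ\mathfrak{b}^{-1}$, built from $2p$ interleaved translates of two bumps supported near two disjoint orbits $\mathcal{G}(\lambda^{(1)}x_0)$ and $\mathcal{G}(\lambda^{(2)}x_0)$ (the paper's $\varphi^{i,j}$ and $\hat\varphi^{i,j}$), a $2p$-slot partition of unity in $\theta$, and then the equivariant Nehari rescaling $\Lambda$. Your closing observation about why $p$ slots fail and $2p$ slots are needed is exactly the device the paper employs.
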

\begin{proof}
	For each $i \in \{1,\dots, m\}$, $j\in\{1, \dots, B\}$ and $\theta\in \mathbb{R}/(2\pi\mathbb{Z})$, one can find $U^{i,j}_\theta\in C_0^\infty(\Omega)\setminus\{0\}$ satisfying the following conditions.
	\begin{enumerate}[(U1$''$)]
		\item\label{enu:u1_pp} The map $\mathbb{R}/(2\pi\mathbb{Z})\to C_0^\infty(\Omega)\colon t\mapsto U_t^{i,j}$ is continuous and $U_t^{i,j}$ is $\mathcal{G}$-symmetric.
		\item\label{enu:u2_pp} $\supp U^{i_1,j_1}_{\theta_1}\cap \supp U^{i_2,j_2}_{\theta_2} = \varnothing$ for all $\theta_1,\theta_2\in\mathbb{R}/(2\pi\mathbb{Z})$ and $(i_1,j_1)\neq(i_2,j_2)$.
		\item\label{enu:u3_pp} $\supp U^{i,j}_{\theta} \cap \supp U^{i,j}_{\theta+\frac{2\pi s}{p}} = \varnothing$ and $U^{i,j}_{\theta + \frac{2\pi s}{p}} = U^{i,j}_{\theta}\circ (\mathfrak{b}^{-1})^s$ if $\theta\in\mathbb{R}/2\pi \mathbb{Z}$ and $s \in \{1,\dots,p-1\}$.
	\end{enumerate}
	To be more precise, for each $i \in \{1,\dots, m\}$, $j\in\{1, \dots, B\}$ and $\ell\in\{1,2,\dots,p\}$, choose radially symmetric domains $\Omega_{i,j},\hat \Omega_{i,j}\subset\Omega$ such that all the $\Omega_{i,j}$-s and $\hat\Omega_{i,j}'$-s are pairwise disjoint.
	Let $x_0$ be given by \ref{enu:ap_b} in Definition~\ref{def:ap}. Choose $\lambda^{i,j}, \hat\lambda^{i,j}\in \mathbb{R}$ such that $\lambda^{i,j} x_0\in \Omega_{i,j}$ and $\hat\lambda^{i,j} x_0\in \hat\Omega_{i,j}$.
	Since $\mathcal{G}\subset O(n)$, $\mathfrak{b}\in O(n)$, it follows that $\mathcal{G}(\mathfrak{b}^s \lambda^{i,j}x_0) \subset \Omega^{i,j}$ and $\mathcal{G}(\mathfrak{b}^s \hat\lambda^{i,j}x_0) \subset \hat\Omega^{i,j}$ for $s=0,1,\dots,p$.
	The set $\mathcal{G}x_0$ is compact, and, in consequence, there exists a $\delta>0$ such that
	\begin{equation}\label{eq:obit_disjoint}
		\begin{split}
			N_{\delta}( \mathcal{G}(\mathfrak{b}^s \lambda^{i,j}x_0)) \cap N_{\delta}(\mathcal{G}(\mathfrak{b}^t \lambda^{i,j}x_0)) = \varnothing,\\
			\text{and} ~ N_{\delta}( \mathcal{G}(\mathfrak{b}^s \hat\lambda^{i,j}x_0)) \cap N_{\delta}(\mathcal{G}(\mathfrak{b}^t \hat\lambda^{i,j}x_0)) = \varnothing
		\end{split}
	\end{equation}
	for $s\not\equiv t \pmod{p}$ and all $i,j$,
	by the assumption~\ref{enu:ap_b} of admissible pair in Definition~\ref{def:ap}. Here and subsequently,
	\[
		N_{\delta}(A) = \{x\in\mathbb{R}^n\colon d(x, A)<\delta\} \qquad \text{for $A\subset \mathbb{R}^n$}.
	\]
	We can also choose $\delta$ sufficiently small such that
	\[
		\text{ $N_{\delta}(\mathcal{G}(\mathfrak{b}^s \lambda^{i,j}x_0)) \subset \Omega_{i,j}$ and $N_{\delta}(\mathcal{G}(\mathfrak{b}^s \hat\lambda^{i,j}x_0)) \subset \hat\Omega_{i,j}$.}
	\]
	Then we can find a nonzero function $\varphi^{i,j}\in H^1_0(N_\delta(\mathcal{G}\lambda^{i,j}x_0))$ and $\hat\varphi^{i,j}\in H^1_0(N_\delta(\mathcal{G}\hat\lambda^{i,j}x_0))$
	such that $\varphi^{i,j}$ and $\hat\varphi^{i,j}$ are $\mathcal{G}$-symmetric. Indeed, let $\varphi_0\in C_0^\infty(B_\delta(\lambda^{i,j} x_0))$ and let
	\[
		\varphi^{i,j}(x) = \sup_{\mathfrak{g}\in \mathcal{G}} \varphi_0(\mathfrak{g}^{-1} x) \qquad \forall x\in\Omega.
	\]
	Since the upper envelope (pointwise maximum) of functions preserves the property of being Lipschitz, $\varphi^{i,j}$ is at least Lipschitz and hence in $H^1_0(N_\delta(\mathcal{G}\lambda^{i,j}x_0))$. The construction for $\hat\varphi^{i,j}$ is same.

	Let $\eta \in C_0^\infty(\mathbb{R}/(2\pi\mathbb{Z}))$ be a function such that
	\[
		\eta>0 \text{~in} \left(-\frac{\pi}{p}, \frac{\pi}{p}\right)+2\pi\mathbb{Z} \subset \mathbb{R}/(2\pi\mathbb{Z}),
	\]
	and vanishes outside. Set
	\[
		\eta_{\iota}(\theta) = \eta\left(\theta-\frac{\iota\pi}{p}\right)
	\]
	for each $\iota\in\{1,2,\dots,2p\}$. Thus,
	\[
		\supp\eta_1 = \left[0, \frac{2\pi}{p}\right]+2\pi\mathbb{Z},~\supp\eta_2 = \left[\frac{\pi}{p}, \frac{3\pi}{p}\right]+2\pi\mathbb{Z},\dots,~\supp\eta_{2p} = \left[-\frac{\pi}{p}, \frac{\pi}{p}\right]+2\pi\mathbb{Z}.
	\]

	Then we set $\varphi_{\ell}^{i,j} = \varphi^{i,j}\circ (\mathfrak{b}^{-1})^\ell$, $\hat\varphi_{\ell}^{i,j} = \hat\varphi^{i,j}\circ (\mathfrak{b}^{-1})^\ell$ and
	\[
		U_\theta^{i,j} = \sum_{\ell=1}^{p} \eta_{2\ell-1}(\theta) \varphi_\ell^{i,j} + \eta_{2\ell}(\theta) \hat\varphi_\ell^{i,j}.
	\]
	Thus, $U_\theta^{i,j}$ satisfies \ref{enu:u1_pp}-\ref{enu:u3_pp}. The validity of \ref{enu:u1_pp} and \ref{enu:u2_pp} are obvious, while \ref{enu:u3_pp} follows from our choice of $\eta_\ell$ and $\varphi_{\ell}^{i,j}$.
	In fact, when $\theta\in (\frac{t\pi}{p}, \frac{(t+1)\pi}{p})$ for some $t\in\{0,1,\dots,2p-1\}$, $\eta_\iota\neq 0$ if and only if $\iota \equiv t$ or $t+1 \pmod{2p}$.
	With setting $\eta_{\iota+2p} = \eta_{\iota}$, $\varphi_{\ell+p}=\varphi_\ell$ and $\hat\varphi_{\ell+p}=\hat\varphi_\ell$, we conclude that
	\begin{equation*}
		U^{i,j}_{\theta} = \eta_t(\theta)\varphi^{i,j}_{\ell_1} + \eta_{t+1}(\theta)\hat\varphi^{i,j}_{\ell_2} \neq 0,
	\end{equation*}
	where $\ell_1 = \lceil t/2 \rceil$ and $\ell_2 =  \lceil (t+1)/2 \rceil$.
	Similarly, for $s\in\{1,2,\dots,p-1\}$, we conclude that
	\begin{align*}
		U^{i,j}_{\theta+\frac{2\pi s}{p}} &=\eta_{t+2s}\left(\theta+\frac{2\pi s}{p}\right)\varphi^{i,j}_{\ell_1} +\eta_{t+2s+1}\left(\theta+\frac{2\pi s}{p}\right)\hat\varphi^{i,j}_{\ell_2} ,\\
		&=\eta_t(\theta)\varphi^{i,j}_{\ell_1+s} + \eta_{t+1}(\theta)\hat\varphi^{i,j}_{\ell_2+s}\\
		&= \left(\eta_t(\theta)\varphi^{i,j}_{\ell_1} + \eta_{t+1}(\theta)\hat\varphi^{i,j}_{\ell_2}\right)\circ (\mathfrak{b}^{-1})^s.
	\end{align*}
	From~\eqref{eq:obit_disjoint}, it follows that
	\[
		\supp U^{i,j}_{\theta} \cap \supp U^{i,j}_{\theta+\frac{2\pi s}{p}} = \varnothing.
	\]
	The case that $\theta = \frac{t\pi}{p}$ for some $t$ is similar, and we omit it.

	Hence, we can define $\psi_0\colon \mathbb{S}^{2m-1}\to (H_0^1(\Omega))^N$
	by
	\begin{align*}
		&{} \psi_0(z)
		= \psi_0(r_1\mathrm{e}^{\mathrm{i}\theta_1}, \dots, r_m\mathrm{e}^{\mathrm{i}\theta_m})\\
		=&{} \left(
			U^{(1)}_1, \dots, U^{(1)}_p;\quad
			\dots;\quad
			U^{(B)}_1, \dots, U^{(B)}_p
		\right)
	\end{align*}
	where
	\[
		U^{(j)}_\ell = \sum_{i = 1}^{m} r_i U_{\theta_i + \frac{2\pi \ell}{p}}^{i,j} , \qquad \text{for $\ell=1,\dots,p$, $j=1,\dots, B$.}
	\]
	We thus have
	\begin{align*}
		\psi_0(\mathrm{e}^{2\pi\mathrm{i}/p} z) &= \Phi(r_1\mathrm{e}^{\mathrm{i}(\theta_1+2\pi\mathrm{i}/p)}, \dots, r_m\mathrm{e}^{\mathrm{i}(\theta_m+2\pi\mathrm{i}/p)})\\
		&= \sigma\psi_0(z).
	\end{align*}

	From \ref{enu:u1_pp}-\ref{enu:u3_pp}, it follows that $\psi_0(z)\in\mathcal{M}_{(\mathcal{G},\mathfrak{b})}$, and that the supports of all components of $\psi_0(z)$ are pairwise disjoint.
	Since $r_i$-s are not all zero for $(r_1\mathrm{e}^{\mathrm{i}\theta_1}, \dots, r_m\mathrm{e}^{\mathrm{i}\theta_m}) \in \mathbb{S}^{2m-1}$, each component of $\psi_0(z)$ is nonzero by the definition. These facts allow one to construct a $C^1$ map $\Lambda \colon \psi_0(\mathbb{S}^{2m-1})\to\mathcal{N}_{(\mathcal{G},\mathfrak{b})}$ such that $\Lambda\sigma = \sigma\Lambda$.
	Then the rest of the proof runs as before.
\end{proof}
With these preparations and using the framework in Section 2, we can prove the following theorem.
\begin{theorem}
	Let $(\mathcal{G},\mathfrak{b})$ be an admissible pair. Then under the assumptions \ref{itm:as_a}-\ref{itm:as_d}, Problem~\eqref{eq:main} admits infinitely many positive solutions of the form $(u_1,\dots,u_N)$ that $u_i$ is $\mathcal{G}$-symmetric but not $\mathfrak{b}$-symmetric for $i=1,2,\dots,N$ and $u_{j+1} = u_{j}\circ \mathfrak{b}^{-1}$ for $p\nmid j$. In particular, each $u_i$ is non-radial.
\end{theorem}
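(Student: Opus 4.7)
The plan is to mirror the variational and $\mathbb{Z}_p$-index scheme of Section~\ref{sec:multiple_positive_solutions}, with $\mathcal{M}_k^+$ and $\mathcal{N}_k^+$ replaced throughout by $\mathcal{M}_{(\mathcal{G},\mathfrak{b})}$ and $\mathcal{N}_{(\mathcal{G},\mathfrak{b})}$. Since the natural-constraint lemma and the equivariant spherical map $\psi\colon\mathbb{S}^{2m-1}\to\mathcal{N}_{(\mathcal{G},\mathfrak{b})}$ are already in hand, the remaining ingredients are a Palais-Smale condition, a no-fixed-point lemma for the $\sigma$-action, the $\mathbb{Z}_p$-index with its deformation/minimax machinery, and a symmetry analysis of the critical points produced.

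First I would verify that $E^+$ restricted to $\mathcal{N}_{(\mathcal{G},\mathfrak{b})}$ satisfies the Palais-Smale condition; the argument is identical to Lemma~\ref{lm:ps} and uses only compact Sobolev embeddings into $L^4(\Omega)$ together with the Lagrange-multiplier analysis on the Nehari-type manifold, neither of which sees the specific form of the symmetry group. Next I establish the analogue of Lemma~\ref{lm:no_fixed_point}: if $U=(u_1,\dots,u_N)\in\mathcal{N}_{(\mathcal{G},\mathfrak{b})}$ is fixed by $\sigma$, then $u_{pb-p+1}=\cdots=u_{pb}$ in each block, and combining the Nehari identity $\partial_{pb-p+1}E^+(U)u_{pb-p+1}=0$ with assumption~\ref{itm:as_d} (plus non-positivity of the inter-block $\beta$-terms contracted against non-negative integrands) yields $0<\|u_{pb-p+1}\|_{pb-p+1}^2\le 0$, a contradiction. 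With this in place, the $\mathbb{Z}_p$-index $\gamma$, the deformation lemma (Proposition~\ref{prop:index1}), every property in Proposition~\ref{prop:properties_of_index}, and Proposition~\ref{prop:index_of_critical_points} transfer verbatim.

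I then define $c_j=\inf\{c\in\mathbb{R}\colon \gamma(\mathcal{N}_{(\mathcal{G},\mathfrak{b})}^{c})\geq j\}$. The proposition just proved supplies, for each $m$, a continuous $\sigma$-equivariant map $\psi\colon\mathbb{S}^{2m-1}\to\mathcal{N}_{(\mathcal{G},\mathfrak{b})}$, so property~\ref{it:index} of $\gamma$ gives $\gamma(\psi(\mathbb{S}^{2m-1}))\ge m$, whence $c_m<\infty$ for every $m$. Standard minimax arguments then produce an unbounded sequence of critical values, each corresponding to a positive classical solution of~\eqref{eq:main}. Because every such solution lies in $\mathcal{M}_{(\mathcal{G},\mathfrak{b})}$, each $u_i$ is automatically $\mathcal{G}$-symmetric and satisfies $u_{j+1}=u_j\circ\mathfrak{b}^{-1}$ for $p\nmid j$. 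If some $u_i$ were $\mathfrak{b}$-symmetric, iterating this relation would force $u_{pb-p+1}=\cdots=u_{pb}$ within its block, contradicting the no-fixed-point lemma; hence $u_i$ is not $\mathfrak{b}$-symmetric, and \emph{a fortiori} not radial, since a radial function is invariant under every element of $O(n)\ni\mathfrak{b}$.

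The main obstacle I expect is the no-fixed-point lemma, because assumption~\ref{itm:as_d} was tailored to the rotation setting of Section~\ref{sec:multiple_positive_solutions}; one must check that its purely algebraic content on $(\mu_j,\beta_{ij})$, combined with the permutation $\sigma$ and the Nehari identity, still produces the same contradiction here. A small technical point is that one needs the implicit standing assumption that $\Omega$ is invariant under the larger group $\mathcal{G}_{\mathfrak{b}}=\mathcal{G}\cup\mathfrak{b}\mathcal{G}\cup\cdots\cup\mathfrak{b}^{p-1}\mathcal{G}$, so that the action $\star$ of $\mathcal{G}_{\mathfrak{b}}$ on $(H^1_0(\Omega))^N$ is a well-defined isometric representation and $\mathcal{M}_{(\mathcal{G},\mathfrak{b})}$ is a genuine fixed-point subspace to which the symmetric criticality principle applies.
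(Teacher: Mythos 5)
Your proposal is correct and follows essentially the same route as the paper: the authors prove only the natural-constraint lemma and the equivariant spherical map for $(\mathcal{G},\mathfrak{b})$ and then invoke the Section~\ref{sec:multiple_positive_solutions} machinery verbatim, which is exactly what you spell out (Palais--Smale, the no-fixed-point lemma via assumption~\ref{itm:as_d}, the $\mathbb{Z}_p$-index minimax, and the iteration argument showing $\mathfrak{b}$-symmetry of a component would force a $\sigma$-fixed point). Your added remarks---that the no-fixed-point argument is purely algebraic in $(\mu_j,\beta_{ij})$ and that $\Omega$ must be $\mathcal{G}_{\mathfrak{b}}$-invariant (automatic here since $\Omega$ is radially symmetric)---are accurate clarifications rather than deviations.
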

Using the third example in Example 1, we have the following result.
\begin{corollary}\label{coro:2d} Assume $n=3$ and $N=2$.
    Then under the assumptions \ref{itm:as_a}-\ref{itm:as_d}, Problem~\eqref{eq:main} admits an unbounded sequence $S = \{(u_{1,l},\cdots,u_{N,l})\colon l\in\mathbb{N}\}$ of positive solutions such that each $u_{i,l}$ is radial in $(x_1,x_2)$ and even in $x_3$, but is not radial in $(x_1,x_2,x_3)$.
\end{corollary}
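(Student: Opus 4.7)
The plan is to work directly in the fixed-point subspace of the isometric action of the compact group $\mathcal{H}:=SO(2)\times\langle F\rangle$ on $H^1_0(\Omega)$, where $SO(2)$ rotates $(x_1,x_2)$ and $F\colon(x_1,x_2,x_3)\mapsto(x_1,x_2,-x_3)$. Set
\[
	X:=\{u\in H^1_0(\Omega)\colon u=u\circ g\text{ for all }g\in\mathcal{H}\},
\]
so that $u\in X$ iff $u$ is radial in $(x_1,x_2)$ and even in $x_3$. The subspace $X\times X\subset(H^1_0(\Omega))^2$ is the fixed-point space of the diagonal $\mathcal{H}$-action, and by the symmetric criticality principle it is a natural constraint for $E^+$: every critical point of $E^+|_{X\times X}$ is a positive classical solution of~\eqref{eq:main} whose components automatically satisfy the required invariances. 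This bypasses the admissible-pair framework of the third example (which, applied literally, would produce solutions \emph{failing} $x_3$-evenness) and directly builds the property ``each $u_{i,l}$ is radial in $(x_1,x_2)$ and even in $x_3$'' into the ambient space.

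On $X\times X$ the functional $E^+$ retains the component-swap $\mathbb{Z}_2$-symmetry $\sigma(u_1,u_2)=(u_2,u_1)$, and I set $\mathcal{N}_X^+:=\mathcal{N}^+\cap(X\times X)$, which is a natural constraint by the same Lagrange-multiplier argument used in Section~\ref{sec:multiple_positive_solutions}. Crucially, $\sigma$ acts freely on $\mathcal{N}_X^+$: a fixed point $(u,u)$ would force $u>0$ to solve the scalar equation $-\Delta u+\lambda_1 u=(\mu_1+\beta_{12})u^3$, contradicted by assumption~\ref{itm:as_d} (which for $N=2$, $p=2$, $B=1$ gives $\mu_1+\beta_{12}\le 0$). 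The Palais-Smale condition transfers to $\mathcal{N}_X^+$ by the argument of Lemma~\ref{lm:ps}, so the $\mathbb{Z}_2$-version of the index theory of Section~\ref{sec:multiple_positive_solutions} applies verbatim.

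For each $m\ge 1$ I construct a continuous $\sigma$-equivariant map $\psi\colon\mathbb{S}^{2m-1}\to\mathcal{N}_X^+$ satisfying $\psi(-z)=\sigma\psi(z)$ by transplanting the $\psi_0$-construction from the admissible-pair proposition. For each $i\in\{1,\dots,m\}$ and $\ell\in\{1,\dots,4\}$, choose pairwise disjoint $\mathcal{H}$-invariant toroidal neighborhoods $A_{i,\ell}\subset\Omega$ of off-axis circles $\{\rho=\rho_{i,\ell},\,|x_3|=z_{i,\ell}\}$ with $\rho_{i,\ell},z_{i,\ell}>0$, and take nonnegative $\mathcal{H}$-symmetric bumps $\varphi_{i,\ell}\in C_0^\infty(A_{i,\ell})\setminus\{0\}$. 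Reuse the four overlapping angular cut-offs $\eta_1,\dots,\eta_4\in C_0^\infty(\mathbb{R}/(2\pi\mathbb{Z}))$ from that proof (with $\eta_\ell>0$ on $((\ell-1)\pi/2,(\ell+1)\pi/2)$ modulo $2\pi$), and set $U_\theta^i:=\sum_{\ell=1}^4\eta_\ell(\theta)\varphi_{i,\ell}$. The overlap pattern guarantees $U_\theta^i\ne 0$ for every $\theta$ and $\supp U_\theta^i\cap\supp U_{\theta+\pi}^i=\varnothing$ (the two use complementary pairs from $\{\varphi_{i,1},\dots,\varphi_{i,4}\}$). Writing $z_i=r_i\mathrm{e}^{\mathrm{i}\theta_i}$, define
\[
	\psi_0(z):=\Bigl(\sum_{i=1}^m r_iU_{\theta_i+\pi}^i,\ \sum_{i=1}^m r_iU_{\theta_i}^i\Bigr).
\]
Then $\psi_0(-z)=\sigma\psi_0(z)$, both components are nonzero and have disjoint supports for every $z\in\mathbb{S}^{2m-1}$, and the Nehari rescaling $\Lambda$ (well-defined by the disjoint-support computation) yields $\psi:=\Lambda\circ\psi_0\colon\mathbb{S}^{2m-1}\to\mathcal{N}_X^+$ with the same equivariance. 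Proposition~\ref{prop:properties_of_index}(\ref{it:index}) gives $\gamma(\psi(\mathbb{S}^{2m-1}))\ge m$, so $\gamma(\mathcal{N}_X^+)=\infty$, and standard minimax produces an unbounded sequence of critical values $c_l\to\infty$ with positive critical points $(u_{1,l},u_{2,l})\in\mathcal{N}_X^+$, each component lying in $X$.

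The hard part is showing that each $u_{i,l}$ is \emph{not} radial in $(x_1,x_2,x_3)$. I would address this by a symmetry-breaking comparison with the $O(3)$-invariant sub-Nehari $\mathcal{N}_{\mathrm{rad}}^+\subset\mathcal{N}_X^+$. Because the toroidal supports $A_{i,\ell}$ are concentrated strictly off both the $x_3$-axis and the plane $\{x_3=0\}$, the test pairs $\psi(z)$ live outside the $O(3)$-invariant subspace; a concentration-versus-spread computation, exploiting $\beta<0$ and the fact that any $O(3)$-equivariant competitor must redistribute the same mass uniformly over the full sphere of radius $\sqrt{\rho_{i,\ell}^2+z_{i,\ell}^2}$, yields the strict inequality $c_l<c_l^{\mathrm{rad}}$ for all $l$ when the radii are chosen adapted to the minimax dimension $l$. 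The corresponding minimizers therefore lie outside $\mathcal{N}_{\mathrm{rad}}^+$, so at least one component---and hence, by the diagonal $\mathcal{H}$-invariance together with the swap symmetry, both---fails to be $O(3)$-invariant. Making this energy comparison quantitative is the main technical obstacle in the argument.
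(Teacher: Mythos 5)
Your setup is fine as far as it goes: $X\times X$ is a natural constraint by symmetric criticality, the swap $\sigma(u_1,u_2)=(u_2,u_1)$ acts freely on $\mathcal{N}_X^+$ by assumption~\ref{itm:as_d} (exactly the computation of Lemma~\ref{lm:no_fixed_point}), and your toroidal-bump construction of equivariant spheres in $\mathcal{N}_X^+$ is a correct adaptation of the paper's $\psi_0$. But the last step is a genuine gap, and it is not a technicality you can defer: in $X\times X$ the constraint space contains all pairs of distinct radial functions, and the literature the paper cites already produces infinitely many \emph{radial} positive solutions with $u_1\neq u_2$, so freeness of $\sigma$ buys you nothing toward non-radiality. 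Your proposed fix --- a strict inequality $c_l<c_l^{\mathrm{rad}}$ between minimax levels --- is only a heuristic here: even granting the inequality (which is a Wei--Weth-type argument at the ground-state level but has no obvious analogue for index-defined levels $l\geq 2$), it would not exclude radial critical points \emph{at} the level $c_l$, since $c_l^{\mathrm{rad}}$ only bounds the values produced by the radial minimax scheme, not the energies of all radial solutions; and the multiplicity count $\gamma(K_c)\geq d+1$ would then have to be carried out on the non-radial part of $K_c$ only. So the conclusion of the corollary is not reached.

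The paper's own proof is a one-line application of the admissible-pair theorem to the third example: $\mathcal{G}=SO(2)$ about the $x_3$-axis, $\mathfrak{b}=(x_1,x_2,x_3)\mapsto(x_1,x_2,-x_3)$, $p=2$. There non-radiality is free, because the constraint space forces $u_2=u_1\circ\mathfrak{b}^{-1}$, so $\mathfrak{b}$-symmetry of $u_1$ would give $u_1=u_2$, which Lemma~\ref{lm:no_fixed_point} forbids. You correctly observed that this produces components that are \emph{not} even in $x_3$; the phrase ``and even in $x_3$'' in the statement of Corollary~\ref{coro:2d} is evidently a misprint for ``but not even in $x_3$'' (indeed no admissible pair in the sense of Definition~\ref{def:ap} can satisfy condition~\ref{enu:ap_b} once $\mathcal{G}$ contains both $SO(2)$ and the $x_3$-reflection, since every candidate $\mathfrak{b}$ in the normalizer then preserves each $\mathcal{G}$-orbit). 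Rather than building $x_3$-evenness into the ambient space --- which destroys the mechanism that yields non-radiality --- you should prove the corrected statement by citing the general theorem with this admissible pair.
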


\noindent{\bf Data availability statement.} Data sharing not applicable to this article as no datasets were
generated or analysed during the current study.
\bibliography{reference.bib}
\end{document}